\newtheorem{theorem}{Theorem}
\newtheorem{definition}[theorem]{Definition}
\newtheorem{lemma}[theorem]{Lemma}
\newtheorem{corollary}[theorem]{Corollary}
\newtheorem{proposition}[theorem]{Proposition}
\newtheorem{remark}[theorem]{Remark}
\newcommand{\R}{\mathbb R}
\numberwithin{theorem}{section}
\numberwithin{equation}{section}
\begin{document}

\title[conserved quantities]{Conserved quantities in general relativity: from the quasi-local level to spatial infinity}
\author{Po-Ning Chen, Mu-Tao Wang, and Shing-Tung Yau}
\begin{abstract}
We define quasi-local conserved quantities in general relativity by using the optimal isometric embedding in \cite{Wang-Yau2} to transplant Killing fields in the Minkowski spacetime back to the 2-surface of interest in a physical spacetime. To each optimal isometric embedding, a dual element of the Lie algebra of the Lorentz group is assigned. Quasi-local angular momentum and quasi-local center of mass correspond to pairing this element with rotation Killing fields and boost Killing fields, respectively. They obey classical transformation laws under the action of the Poincar\'e group. We further justify these definitions by considering their limits as the total angular momentum and the total center of mass of an isolated system. These expressions were derived from the Hamilton-Jacobi analysis of gravitation action and thus satisfy conservation laws. As a result, we obtained an  invariant total angular momentum theorem in the Kerr spacetime. For a vacuum asymptotically flat i!
 nitial data set of order $1$, it is shown that the limits are always finite without any extra assumptions. We also study these total conserved quantities on a family of asymptotically flat initial data sets evolving by the vacuum Einstein evolution equation. It is shown that the total angular momentum is conserved under the evolution. 
For the total center of mass, the classical dynamical formula relating the center of mass, energy, and linear momentum is recovered, in the nonlinear context of initial data sets evolving by the vacuum Einstein evolution equation. The definition of quasi-local angular momentum provides an answer to the second problem in classical general relativity on Penrose's list \cite{Penrose}.

\end{abstract}

\thanks{Part of this work was carried out while all three authors were visiting Department of Mathematics of National Taiwan University and Taida Institute for Mathematical Sciences in Taipei, Taiwan. P.-N. Chen is supported by NSF grant DMS-1308164, M.-T. Wang is supported by NSF grant DMS-1105483 and S.-T. Yau is supported by NSF
grant  PHY-0714648.} 

\date{December 3, 2013, updated January 28, 2014}
\maketitle
\section{Introduction}

Despite the great success of positive mass theorem (see \cite{Arnowitt-Deser-Misner, Schoen-Yau1, Schoen-Yau2, Witten} and the references therein) in the theory of general relativity, there remain several outstanding and challenging problems regarding the notion of mass. The fundamental difficulty is that, unlike any other physical theory, there is no mass density for gravitation. The naive formula that mass is the bulk integral of mass density is ultimately false. For this reason, most study is limited to the total mass of an isolated gravitating system where it is measured as a flux 2-integral at asymptotic infinity. 
However, a quasi-local description of mass is extremely useful because most physical models are finitely extended regions.
In 1982, Penrose \cite{Penrose} proposed a list of major unsolved problems in general relativity, and the first was ``find a suitable quasi-local definition of energy-momentum (mass)". This is a notion that is attached to a spacelike 2-surface in spacetime. It is expected that other conserved quantities such as the angular momentum can be described quasi-locally (this is the second problem on Penrose's list).

In \cite{Wang-Yau1, Wang-Yau2}, the second and third named authors discovered a definition of quasilocal mass that satisfies highly desirable properties. In particular, the quasi-local mass is positive when the ambient spacetime satisfies the dominant energy condition, and vanishes for 2-surfaces in the Minkowski spacetime. Suppose $\Sigma$ is a spacelike 2-surface in a general physical spacetime $N$. The definition depends only on the induced metric $\sigma$ on $\Sigma$ and the mean curvature vector field $H$ of $\Sigma$. The mean curvature vector field $H$ is a unique normal vector field along the surface that is derived from the  variation of surface area. We consider isometric embeddings of the surface into $\R^{3,1}$ with the same induced metric $\sigma$. Among these isometric embeddings, the mean curvature vector field $H$ picks up optimal ones, by means of a fourth order elliptic equation, which best match the physical embedding of $\Sigma$ in $N$. Once an optimal observer, which corresponds to a future unit time-like Killing field $T_0$ in the Minkowski spacetime, is chosen, there is also a canonical gauge to identity the normal bundle of $\Sigma$ in $N$ and the normal bundle of the image of the isometric embedding in $\R^{3,1}$. In this article, we assign quasi-local angular momentum and quasi-local center of mass to each pair of optimal isometric embedding and observer. The idea is to restrict a Killing field $K$ to the image of the isometric embedding in $\R^{3,1}$ and then use the canonical gauge to transplant $K$ back to the physical surface $\Sigma$ in $N$.  This is consistent with the Hamilton-Jacobi method of deriving conserved quantities and the optimal isometric embedding corresponds to the ground state of the surface. Such an optimal isometric embedding may not be unique in general. However, it was proved in \cite{Chen-Wang-Yau2} that an optimal isometric embedding is locally unique if the quasi-local mass density (denoted as $\rho$ later, see \eqref{rho}) is pointwise positive. 

In the rest of the article, we justify the definitions of these conserved quantities. They transform equivariantly with respect to the action of the Lorentz group on the image of the isometric embedding in $\R^{3,1}$. They also obey the classical transformation law when the optimal isometric embedding is shifted by a translating Killing field (this is equivalent to a shift of the center of mass). The new qusi-local angular momentum is consistent with the Komar angular momentum for an axially symmetric 2-surface in an axially symmetric spacetime. When the quasi-local mass is zero, all quasi-local conserved quantities vanish. In particular, this holds true for a 2-surface in the Minkowski spacetime. We remark that there have been several prior attempts to define quasi-local angular momentum, most notably Penrose's definition \cite{Penrose2}.  We refer to the review article of Szabados \cite{Szabados} and references therein. 

There are few criteria for a valid definition of conserved quantities at the quasi-local level. In order to further justify their physical meaning, we study the limit of these conserved quantities at spatial infinity of asymptotically flat spacetime and considered them as total angular momentum and total center of mass of the system.

There are several existing definitions of total angular momentum and  total center of mass at spatial infinity, for example the Arnowitt-Deser-Misner (ADM) angular momentum \cite{Arnowitt-Deser-Misner} (see also Ashtekar-Hansen \cite{Ashtekar-Hansen}) and the total center of mass proposed by Huisken-Yau \cite{Huisken-Yau}, Regge-Teitelboim \cite{Regge-Teitelboim} (Beig-\'OMurchadha \cite{Beig-Omurchadha}), Christodoulou \cite{Christodoulou}, and Schoen \cite{Huang}. (See for example \cite{HSW, Huang} for these definitions.)  Because these definitions involve asymptotically flat Killing fields which depend on the asymptotically flat coordinates, there remain several questions concerning the finiteness, well-definedness, and physical validity of these definitions. In particular, there are various conditions proposed by Ashtekar-Hansen \cite{Ashtekar-Hansen} , Regge-Teitelboim \cite{Regge-Teitelboim}, Chrusciel \cite{Chrusciel} etc. to guarantee the finiteness of the definition of the total angular momentum and total center of mass at spatial infinity. The most well-studied one is perhaps the Regge-Teitelboim condition, under which there are density theorems of Corvino-Schoen \cite{Corvino-Schoen}, uniqueness of center of mass theorems \cite{Huang}, and effective constructions of initial data sets \cite{Corvino-Schoen, HSW}.

 The new definition only depends on the geometric data $(g, k)$ and the foliation of surfaces at infinity, and does not depend on the asymptotically flat coordinate system or the existence of asymptotically Killing field on the initial data set. Besides, the definition gives an element in the dual space of the Lie algebra of the Lorentz group. In particular, the same formula works for total angular momentum and total center of mass, and the only difference is to pair this element with either a rotation Killing field or a boost Killing field.

We prove that  the total angular momentum on any spacelike hypersurface of Kerr spacetime is an invariant. Note that the definition of the new total angular momentum relies only on $(g_{ij}, k_{ij})$ and does not assume a priori knowledge of a rotation Killing field on the physical initial data set. All Killing fields considered are from the reference Minkowski spacetime through the optimal isometric embedding. In particular, this shows the total angular momentum integral on any spacelike hypersurface of the Minkowski spacetime is zero regardless of the asymptotic behavior at infinity. 

The new total angular momentum and new total center of mass vanish on a spacelike hypersurface of the Minkowski spacetime because the quasi-local mass density for a 2-surface in Minkowski spacetime is always zero. It is not clear if this physical validity condition still holds with other definitions. 
For example, there exists asymptotically flat spacelike hypersurface of the Minkowski spacetime with non-zero ADM total angular  momentum \cite{Chen-Huang-Wang-Yau}. We also proved that the new definitions of total angular momentum and total center of mass are always finite on an asymptotically flat initial data set of order $1$. 

 \vskip 10pt
\noindent {\bf Theorem A} (Theorem \ref{finite})
{\it Suppose $(M, g, k)$ is an asymptotically flat initial data set of order $1$ (see Definition \ref{order_one}) satisfying the vacuum constraint equation. The total center of mass $C^i$ and total angular momentum $J_i$ are always finite.}
\vskip 10pt

 More importantly, we study how theses quantities change along the Einstein evolution equation. For a moving particle, the dynamical formula  in classical mechanics $p=m\dot{r}$ holds where $p$ is the linear momentum, $m$ is the mass, and $r$ is the center of mass. 
The Einstein equation $R_{\mu\nu}-\frac{1}{2} R g_{\mu\nu}=8\pi T_{\mu\nu}$ can be formulated as an initial value problem on a initial data set 
$(M, g, k)$ that satisfies the constraint equation. The solution of the problem is a family $(M, g(t), k(t))$ that satisfies the Einstein evolution equation, a second order hyperbolic system. Therefore, a conceivable  criterion for the validity of a given definition
of total center of mass is to check the validity of this formula under the Einstein evolution equation. 
In the case of strongly asymptotically flat initial data on which the total linear momentum vanishes $p(t)=0$,  Christodoulou \cite{Christodoulou} gave a definition of total center of mass and proved that it is conserved under the Einstein evolution equation.  We are able to prove this dynamical formula for the newly defined total center of mass allowing the more general condition $p(t)\not=0$:

\vskip 10pt
\noindent {\bf Theorem B} (Theorem \ref{thm_variation_center})
{\it Suppose $(M, g, k)$ is an asymptotically flat initial data set of order $1$ (see Definition \ref{order_one}) satisfying the vacuum constraint equation. Let $(M, g(t), k(t) )$ be the solution to the initial value problem $g(0)=g$ and $k(0)=k$ for the vacuum Einstein equation with lapse function $N=1+O(r^{-1})$ and shift vector $\gamma=\frac{\gamma^{(-1)}}{r} +O(r^{-2})$.
The total center of mass $C^i(t)$ and total angular momentum $J_i(t)$ of $(M, g(t), k(t))$ satisfy
\[\begin{split}
\partial_t C^i (t)= &  \frac{p^i}{e},\\
\partial_t J_{i} (t) = &  0
\end{split} \]
for $i=1, 2, 3$ where $(e,p^i)$ is the ADM energy momentum of $(M, g(0), k(0))$.}
\vskip 10pt

In fact, the dynamical formula for the newly defined total center of mass and angular momentum holds under a weaker asymptotically flat condition, assuming the 
total center of mass and angular momentum is finite on the hypersurface $t=0$.

\noindent {\bf Theorem C} (Theorem \ref{cor_variation_center})
{\it Suppose $(M, g, k)$ is a vacuum initial data set satisfying 
\begin{equation}
\begin{split}
g=&\delta+O(r^{-1})\\
k=&O(r^{-2})
\end{split}
\end{equation} such that the total center of mass $C^i$ and total angular momentum $J_i$ of $(M, g, k)$ are both finite.
 Let $(M, g(t), k(t) )$ be the solution to the initial value problem $g(0)=g$ and $k(0)=k$ for the vacuum Einstein equation with lapse function $N=1+O(r^{-1})$ and shift vector $\gamma=\gamma^{(-1)} r^{-1}+O(r^{-2})$.
The total center of mass $C^i(t)$ and total angular momentum $J_i(t)$ of $(M, g(t), k(t))$ satisfy
\[
\begin{split}
\partial_t C^i (t)= &  \frac{p^i}{e},\\
\partial_t J_{i} (t) = &  0
\end{split} 
\]
for $i=1, 2, 3$ where $(e,p^i)$ is the ADM energy momentum of $(M, g, k)$.}

There are also several existing definitions of total center of mass in general relativity such as \cite{Regge-Teitelboim, Beig-Omurchadha, Huisken-Yau}. To the best of our knowledge, this is the first time when such a dynamical formula can be established in the context of the Einstein equation. 

A typical example corresponds to a family of boosted slices moving by the vacuum Einstein evolution equation in the Schwarzschild spacetime.
Let $(y^0, y^1, y^2, y^3)$ be the standard isotropic coordinates of Schwarzchild's solution in which the spacetime metric is of the form:
\begin{equation}\label{isotropic_Sch} G_{\alpha\beta}d y^\alpha dy^\beta=-\frac{1}{F^2(\rho)}(dy^0)^2+\frac{1}{G^2(\rho)} \sum_{i=1}^3 (dy^i)^2\end{equation} with
\[F^2(\rho)=\frac{(1+\frac{M}{2\rho})^2}{(1-\frac{M}{2\rho})^2},\,\,\, G^2(\rho)=\frac{1}{(1+\frac{M}{2\rho})^4}\] and $\rho^2=\sum_{i=1}^3 (y^i)^2$.

 Given constants $\gamma>0$ and $\beta$ with $\gamma=1/\sqrt{1-\beta^2}$. Consider a change of coordinate systems from $(y^0, y^1, y^2, y^3)$ to $(t, x^1, x^2, x^3)$ with $y^0=\gamma t+\beta\gamma x^3$, $y^1=x^1$, $y^2=x^2$, and $y^3=\gamma x^3+\beta\gamma t$. The spacetime metric in the new coordinate system $(t, x^1, x^2, x^3)$ is 
\[\begin{split}&(-\frac{1}{F^2(\rho)}+\frac{1}{G^2(\rho)}\beta^2)\gamma^2 dt^2+2\beta\gamma^2(-\frac{1}{F^2(\rho)}+\frac{1}{G^2(\rho)})dt dx^3\\
&+\frac{1}{G^2(\rho)}\left((dx^1)^2+(dx^2)^2\right)+(-\frac{1}{F^2(\rho)} \beta^2+\frac{1}{G^2(\rho)}) \gamma^2 (dx^3)^2\end{split}\] with
$\rho^2=(x^1)^2+(x^2)^2+(\gamma x^3+\beta\gamma t)^2$.
 
The lapse function $ (-\frac{1}{F^2(\rho)}+\frac{1}{G^2(\rho)}\beta^2)\gamma^2$ and shift (co)-vector $\beta\gamma^2(-\frac{1}{F^2(\rho)}+\frac{1}{G^2(\rho)})dx^3$ satisfy the decay condition in the Theorem. 
 It was computed in \cite{Wang-Yau3} that the limit of the quasilocal energy-momentum is the 4-vector $M(\gamma, 0, 0, -\beta\gamma)$. The center of mass is thus $(C^1(t), C^2(t), C^3(t))$ with $\partial_t C^1(t)=\partial_t C^2(t)=0$ and $\partial_t C^3(t)=-\beta$. The initial values of $(C^1(0), C^2(0), C^3(0))$ do not matter as they are coordinate dependent. \footnote{After this paper was completed and posted on the arXiv on December 3, 2013, we received a paper by Nerz \cite{Nerz} in which the change of the Huisken-Yau definition of center of mass was studied. }

\section{Definition of quasi-local conserved quantities}

In this section, we describe in details the quasi-local conserved quantities we proposed. 
Let $\Sigma$ be a closed embedded spacelike 2-surface in a spacetime $N$. We assume the mean curvature $H$ of $\Sigma$ is spacelike. The data used in the definition of an optimal isometric embedding is the triple $(\sigma,|H|,\alpha_H)$ where $\sigma$ is the induced metric, $|H|$ is the norm of the mean curvature vector, and $\alpha_H$ is the connection one-form of the normal bundle with respect to the mean curvature vector
\[ \alpha_H(\cdot )=\langle \nabla^N_{(\cdot)}   \frac{J}{|H|}, \frac{H}{|H|}   \rangle  \]
where $J$ is the reflection of $H$ through the incoming  light cone in the normal bundle.
Given an embedding $X:\Sigma\rightarrow \R^{3,1}$ and a future timelike unit Killing field $T_0$ in $\R^{3,1}$, we consider the projected embedding $\widehat{X}$ into the orthogonal complement of $T_0$, and denote the induced metric, the second fundamental form, and the mean curvature of the image surface $\widehat{\Sigma}$  by $\hat{\sigma}_{ab}$, $\hat{h}_{ab}$, and $\widehat{H}$, respectively. 

The quasi-local energy with respect to $(X, T_0)$ is (see \S 6.2 of \cite{Wang-Yau2})
\[\begin{split}&E(\Sigma, X, T_0)=\frac{1}{8\pi}\int_{\widehat{\Sigma}} \widehat{H} d{\widehat{\Sigma}}-\frac{1}{8\pi}\int_\Sigma \left[\sqrt{1+|\nabla\tau|^2}\cosh\theta|{H}|-\nabla\tau\cdot \nabla \theta -\alpha_H ( \nabla \tau) \right]d\Sigma,\end{split}\] where $\nabla$ and $\Delta$ are the gradient and Laplace operator of $\sigma$ respectively, $\tau=-\langle X, T_0\rangle$ is considered as a function on the 2-surface,
$|\nabla \tau|^2=\sigma^{ab}\nabla_a \tau\nabla_b\tau$, $\Delta \tau=\nabla^a\nabla_a \tau$, and \begin{equation}\label{theta}\theta=\sinh^{-1}(\frac{-\Delta\tau}{|H|\sqrt{1+|\nabla\tau|^2}}).\end{equation}

In relativity, the energy of a moving particle depends on the observer, and the rest mass is the minimal energy seen among all observers.
In order to define quasi-local mass, we minimize quasi-local energy $E(\Sigma, X, T_0)$ among all admissible pairs $(X, T_0)$ which are considered to be quasi-local observers. A critical point corresponds to an optimal isometric embedding which is defined as follows:

\begin{definition} (see \cite{Wang-Yau2})

Given a closed embedded spacelike 2-surface $\Sigma$ in $N$ with $(\sigma,|H|,\alpha_H)$, an optimal isometric embedding is an embedding $X:\Sigma\rightarrow \R^{3,1}$ such that the induced metric of the image surface in $\R^{3,1}$ is $\sigma$, and there exists a $T_0$ such that
$\tau=-\langle X, T_0\rangle$ satisfies

\[   -(\widehat{H}\hat{\sigma}^{ab} -\hat{\sigma}^{ac} \hat{\sigma}^{bd} \hat{h}_{cd})\frac{\nabla_b\nabla_a \tau}{\sqrt{1+|\nabla\tau|^2}}+ div_\sigma (\frac{\nabla\tau}{\sqrt{1+|\nabla\tau|^2}} \cosh\theta|{H}|-\nabla\theta-\alpha_{H})=0
\] where $\nabla_a\nabla_b$ is the Hessian operator with respect to $\sigma$ and $\theta$ is given by \eqref{theta}.
\end{definition}

Denoting the norm of the mean curvature vector and the connection one-form in mean curvature gauge of the image surface of $X$ in $\R^{3,1}$ by $|H_0|$ and $\alpha_{H_0}$, respectively, we have the following equations relating the geometry of the image of the isometric embedding $X$ and 
the image surface $\widehat{\Sigma}$ of $\widehat{X}$ : 
\[\sqrt{1+|\nabla\tau|^2}\widehat{H} =\sqrt{1+|\nabla\tau|^2}\cosh\theta_0|{H_0}|-\nabla\tau\cdot \nabla \theta_0 -\alpha_{H_0} ( \nabla \tau) \]
and
\[   -(\widehat{H}\hat{\sigma}^{ab} -\hat{\sigma}^{ac} \hat{\sigma}^{bd} \hat{h}_{cd})\frac{\nabla_b\nabla_a \tau}{\sqrt{1+|\nabla\tau|^2}}+ div_\sigma (\frac{\nabla\tau}{\sqrt{1+|\nabla\tau|^2}} \cosh\theta_0|{H_0}|-\nabla\theta_0-\alpha_{H_0})=0.
\]

We can then substitute these relations into the expression for $E(\Sigma, X, T_0)$ and the optimal isometric embedding equation and rewrite them in
term of  \begin{equation} \label{rho} \begin{split}\rho &= \frac{\sqrt{|H_0|^2 +\frac{(\Delta \tau)^2}{1+ |\nabla \tau|^2}} - \sqrt{|H|^2 +\frac{(\Delta \tau)^2}{1+ |\nabla \tau|^2}} }{ \sqrt{1+ |\nabla \tau|^2}}. \end{split}\end{equation}

The quasi-local energy in terms of $\rho$ is
\begin{equation}\label{qle} E(\Sigma, X, T_0)=\frac{1}{8\pi}\int_\Sigma \left[\rho(1+|\nabla\tau|^2)+\Delta \tau \sinh^{-1}(\frac{\rho \Delta\tau}{|H_0| |H|})-\alpha_{H_0}(\nabla \tau)+\alpha_H(\nabla \tau)\right] d\Sigma \end{equation}
and the optimal isometric embedding equation in terms of $\rho$ is 
\begin{equation} \label{optimal3}
div_\sigma\left(\rho \nabla \tau - \nabla [ \sinh^{-1} (\frac{\rho \Delta \tau }{|H_0||H|})] - \alpha_{H_0} + \alpha_{H}\right)=0.
\end{equation}
We note that $\rho=-f$ in equation (4.5) of \cite{Chen-Wang-Yau1}.

We check that for a spacelike 2-surface $\Sigma$ in $\R^{3,1}$ with the induced metric and mean curvature vector in $\R^{3,1}$, the embedding
is an optimal isometric embedding with respect to the data. 
Such an optimal isometric embedding may not be unique in general. However, there are several important cases \cite{mt, Chen-Wang-Yau2} in which a solution of the equation is locally or globally energy minimizing and thus locally unique. In particular, it was proved in \cite{Chen-Wang-Yau2} that if $\rho$ is pointwise positive, the corresponding optimal isometric embedding is locally energy-minimizing and thus locally unique. 

$T_0$ is considered the direction of the quasi-local energy-momentum 4-vector. However, note that being an optimal isometric embedding is invariant under the $SO(3,1)$ action, i.e. if $(X, T_0)$ is an optimal isometric embedding, so is $(AX, AT_0)$ for any $A$ in $SO(3,1)$.

We shall define quasi-local conserved quantities with respect to an optimal isometric embedding. Let $(X^0, X^1, X^2, X^3)$ be the standard coordinate system of $\R^{3,1}$. We recall that $K$ is a rotation Killing field if $K$ is the image of $X^i\frac{\partial}{\partial X^j}-X^j\frac{\partial}{\partial X^i}, i<j$ 
under a Lorentz transformation. $K$ is a boost Killing field if $K$ is the image of $X^i\frac{\partial}{\partial X^0}+X^0 \frac{\partial}{\partial X^i}, i=1, 2, 3$ under a Lorentz transformation. 

\begin{definition} \label{ql_conserved}The quasi-local conserved quantity of $\Sigma$ with respect to an optimal isometric embedding $(X, T_0)$ and a Killing field $K$ is 
\begin{equation}\label{qlcq2}\begin{split}&E(\Sigma, X, T_0, K)\\&=\frac{(-1)}{8\pi} \int_\Sigma
\left[ \langle K, T_0\rangle \rho+K^\top\cdot \left(  \rho {\nabla \tau }- \nabla[ \sinh^{(-1)} (\frac{\rho\Delta \tau }{|H_0||H|})]-\alpha_{H_0}  + \alpha_{H} \right)\right]d\Sigma.\end{split}\end{equation}  Suppose $T_0=A(\frac{\partial}{\partial X^0})$ for a Lorentz transformation $A$, then the quasi-local conserved quantities corresponding to $A(X^i\frac{\partial}{\partial X^j}-X^j\frac{\partial}{\partial X^i}), i<j$ are called the quasi-local angular momentum integral with respect
to $T_0$ and the quasi-local conserved quantities corresponding to $A(X^i\frac{\partial}{\partial X^0}+X^0 \frac{\partial}{\partial X^i}), i=1, 2, 3$ are called the quasi-local center of mass integral with respect to $T_0$. 
\end{definition}
In particular, when $K=T_0$, formula  \eqref{qle} for $E(\Sigma, X, T_0)$  is recovered. 
It is straightforward to see that the quasi-local energy of an optimal isometric embedding $(X, T_0)$ is 
\begin{equation}\label{qlm}\frac{1}{8\pi} \int_\Sigma \rho.\end{equation}

\begin{definition}
Let $(X, T_0)$ be an optimal isometric embedding. Then the function $\rho$ defined in \eqref{rho} is called the quasi-local mass density of $(X, T_0)$ and the integral \eqref{qlm} is the quasi-local mass  of $(X, T_0)$. 
\end{definition}
This is analogous to the rest mass in special relativity which is obtained by minimizing energy seen by all observers.

\section{Definition of total conserved quantities at spatial infinity}

Consider an initial data set  $( M, g, k)$ where $M$ is a 3-manifold, $g$ is an Riemannian metric on $M$ and $k$ is a symmetric two-tensor on $M$. 

\begin{definition}\label{order_one}
 $(M, g, k)$ is \textit{asymptotically flat of order one} if there is a compact subset $C$ of $M$ such that 
$ M \backslash C $ is diffeomorphic to $\R^3 \backslash B$. The diffeomorphism induces a ``Cartesian" coordinate system $\{x^i\}_{i=1, 2, 3}$ on $M\backslash C$ and, in terms of this coordinate system, we have the following decay condition for $g$ and $k$.
\begin{equation} 
\begin{split}
g_{ij} & = \delta_{ij}+ \frac{g_{ij}^{(-1)}}{r}+  \frac{g_{ij}^{(-2)}}{r^2}+ o(r^{-2})\\
k_{ij} & =   \frac{k_{ij}^{(-2)}}{r^2}   +  \frac{k_{ij}^{(-3)}}{r^3} +o(r^{-3})        
\end{split}
\end{equation}
where $r=\sqrt{\sum_{i=1}^3 (x^i)^2}$. 
\end{definition}

There is also a spherical coordinate system $\{ r, \theta=u^1, \phi=u^2\}$ system on $M\backslash C$
defined by \[\begin{cases}x^1&=r\sin\theta\sin\phi\\x^2&=r\sin\theta\cos\phi\\x^3&=r\cos\theta\end{cases}.\]

On each level set of $r$,  $\Sigma_r$, we can use $\{u^a\}_{a=1, 2}$ as coordinate system to express the geometric data we need in order to define quasi-local conserved quantities: 
\[  
\begin{split}
\sigma_{ab} & = r^2 \tilde \sigma_{ab}+ r \sigma_{ab}^{(1)} + \sigma_{ab}^{(0)}+ o(1) \\
|H| & = \frac{2}{r}+ \frac{h^{(-2)}}{r^2}+\frac{h^{(-3)}}{r^3} + o(r^{-3}) \\
\alpha_H & = \frac{\alpha_H^{(-1)} }{r}+ \frac{\alpha_H^{(-2)} }{r^2} + o(r^{-2})
\end{split}
\] 
where $\tilde{\sigma}_{ab}$ is the standard metric on a unit round sphere $S^2$. $\sigma_{ab}^{(1)}$ and $ \sigma_{ab}^{(0)}$ are considered as symmetric 2-tensors on $S^2$, $h^{(-2)}$ and $h^{(-3)}$ are functions on $S^2$ and $\alpha_H^{(-1) }$ and  $\alpha_H^{(-2)}  $ are 1-forms on $S^2$. We shall see that all total conserved quantities on $(M, g, k)$ can be determined by these data on $S^2$.

Using the result of \cite{Chen-Wang-Yau1}, there is a unique family of isometric embedding $X_r$ and observer $T_0(r)$ which minimizes the quasi-local energy locally. 
The embedding $ X({r})= (X^0({r}), X^i({r})) $ has the following expansion
\[  
\begin{split}
X^0({r}) & = (X^0)^{(0)} + \frac{(X^0)^{(-1)}}{r} + o(r^{-1}) \\
X^i ({r})& = r \tilde X^i + (X^i)^{(0)} + \frac{(X^i)^{(-1)}}{r} + o(r^{-1}) \\
T_0 ({r})& = (a^0,a^i) + \frac{T_0^{(-1)}}{r} + o(r^{-1}).
\end{split}
\] 
In particular, the image of the isometric embedding $X_r$ approaches the standard round sphere of radius $r$ in $\R^3$.
We note that the gauge choice of the optimal isometric embedding $(X, T_0)$ is fixed by this choice. 

The dependence goes as follows, $X_i^{(0)}$ depends on $\sigma_{ab}^{(1)}$, $h_0^{(-2)}$ depends on $X_i^{(0)}$, and $\tau^{(0)}$ depends on 
$(a^0, a^i)$, $h^{(-2)}$, $h_0^{(-2)}$, and $\alpha_{H}^{(-1)}$, etc.
As a result, the data on the image of $X_r$ also satisfy
\[  
\begin{split}
|H_0| & = \frac{2}{r}+ \frac{h_0^{(-2)}}{r^2}+\frac{h_0^{(-3)}}{r^3} + o(r^{-3}) \\
\alpha_{H_0} & = \frac{\alpha_{H_0}^{(-1)} }{r}+ \frac{\alpha_{H_0}^{(-2)}}{r^2} + o(r^{-2}).
\end{split}
\] 
We recall from \cite{Wang-Yau3} and \cite{Chen-Wang-Yau1},
\[m=\frac{1}{8 \pi}\int_{S^2} \rho^{(-2)}dS^2=\frac{1}{8 \pi a^0} \int_{S^2}  (h_0^{(-2)}-h^{(-2)}) dS^2\]
is the ADM mass,
\begin{equation}\label{ADM_e}\frac{1}{8 \pi}\int_{S^2} (h_0^{(-2)}-h^{(-2)}) dS^2=ma^0=e\end{equation} 
is the ADM energy and 

\begin{equation}\label{ADM_p}\frac{1}{8 \pi}\int_{S^2} \tilde{X}^i \widetilde{div}(\alpha_H^{(-1)})  dS^2=m a^i=p^i\end{equation} is the  ADM linear momentum.

\begin{definition}
Suppose $ T_0({r})=A(r)(\frac{\partial}{\partial X^0})$ for a family of Lorentz transformation $A(r)$.  Define
\begin{equation}
C^i  =  \frac{1}{m}\lim_{r \to \infty} E(\Sigma_r, X_r, T_0(r), A(r)(X^i\frac{\partial}{\partial X^0}+X^0\frac{\partial}{\partial X^i}))\\
\end{equation} to be the total center of mass and 
\begin{equation}
J_{i} =\lim_{r \to \infty} \epsilon_{ijk}E(\Sigma_r , X_r, T_0(r), A(r)(X^j\frac{\partial}{\partial X^k}-X^k\frac{\partial}{\partial X^j})) 
\end{equation} to be the total angular momentum,
where  $\Sigma_r$ are the coordinate spheres and $(X_r, T_0({r}))$ is the unique family of optimal isometric embeddings of $\Sigma_r$ such that $X_r$ converges to a round sphere of radius $r$ in $\R^3$ when $r\rightarrow \infty$.
\end{definition}


\section{Change of quasi-local center of mass and angular momentum with respect to reference frame.}
In special relativity, one can associate energy-momentum 4-vector and conserved quantities for a particle moving along a geodesics. Let $(X^0, X^1, X^2, X^3)$ denote the standard coordinate system on $\R^{3,1}$ such that the Minkowski metric is of the form $\eta_{\alpha\beta}=-(dX^0)^2+\sum_i (dX^i)^2$. Suppose the particle moves along the geodesics $X(s)$. Denote $\partial_s X(s)$ by $\dot {X}(s)$.

 For a Killing vector field $K$ in $\R^{3,1}$, the conserved quantity associates to the geodesics $X(s)$ and the Killing vector field $K$ is 
\[ \dot {X}^{\alpha} \eta_{\alpha \beta} K^{\beta}.  \]

The energy-momentum 4-vector of the particle is obtained using the Killing vector field $K=\frac{\partial}{\partial X^{\alpha}}$. For such $K$, the associate conserved quantity is $\dot {X}_{\alpha}$. The energy of the particle measured by an observers $T_0$ can be considered as a function $e_X$ on the set of future-directed unit timelike vectors, $\mathbb H^3$ in $\R^{3,1}.$ Moreover, $e_X$ is the restriction of a linear function on $\R^{3,1}$ to $\mathbb H^3$. The  energy-momentum 4-vector $\dot{X}$ is  the dual of this linear function  $e_X$. For a family of  observer $T_0(t)$ in $\mathbb H^3$ with $T_0(0)=\frac{\partial}{\partial X^0}$, 
\begin{equation} \label{newtonian1}
    \partial_t e_X(T_0(t))|_{t=0} = a^i \dot{X}_i \end{equation}
where
\[
 \partial_t T_0|_{t=0}= -a^i  \frac{\partial}{\partial X^i}.
\]
Furthermore, if we compute the conserved quantity for the Killing vector field  $K=K_{\alpha}^{\,\,\,\,\beta} X^\alpha\frac{\partial}{\partial X^\beta}$ where $K_{\alpha \beta}$ is skew-symmetric, then the conserved quantity become 
\[ \dot {X}^{\alpha} g_{\alpha \beta} K^{\beta} = \dot {X}^{\alpha} K_{\alpha\beta} X^\beta. \]

Moreover, if we translate the geodesics by a constant vector $b=(b^{\alpha})$, we obtain a new geodesics $(X')^{\alpha}=X^{\alpha}+ b^{\alpha}$. The difference of the conserved quantities along the two geodesics $X$ and $X'$ is
\begin{equation} \label{newtonian2}
  \dot {X}^{\alpha}K_{\alpha \beta}b^\beta 
\end{equation} 

In this section, we prove equation  \eqref{newtonian1} and \eqref{newtonian2} for quasi-local energy-momentum, center of mass and angular momentum. 

\subsection{Definition of quasi-local conserved quantities with respect to a general isometric embedding}

Quasi-local conserved quantities can be defined for $(X, T_0)$ that is not necessarily optimal and they also transform well under  the Lorentz group action on $\R^{3,1}$. 
Let  $K=K_{\alpha}^{\,\,\,\,\beta}  X^\alpha\frac{\partial}{\partial X^\beta}$ denote a Killing field, the corresponding conserved quantity is 

\[\frac{(-1)}{8\pi} K_{\alpha \gamma} \int_\Sigma X^\alpha \left( \rho T_0^\gamma+(\nabla_a X^\gamma )j^a \right) d\Sigma
\]
where $X^\alpha=X^\alpha(u^a)$ are components of the optimal isometric embedding, $K_{\alpha\gamma}=K_{\alpha}^{\,\,\,\,\beta}  \eta_{\beta\gamma}$, and 
\begin{equation} \label{optimal_embedding_vector}
j^a=\sigma^{ac}\left( \rho {\partial_c \tau }- \partial_c [ \sinh^{-1} (\frac{\rho\Delta \tau }{|H_0||H|})]-(\alpha_{H_0})_c  + (\alpha_{H})_c \right). \end{equation}   Note that $K_{\alpha\gamma}=-K_{\gamma\alpha}$ is skew-symmetric. 

We interpret \[p^\gamma=\frac{1}{8\pi}  \int_\Sigma \left( \rho T_0^\gamma+(\nabla_a  X^\gamma) j^a \right) d\Sigma, \gamma=0, 1, 2, 3\] as the energy-momentum 4-vector and thus  \[e=\frac{1}{8\pi}  \int_\Sigma \left( \rho T_0^0+(\nabla_a X^0) j^a \right) d\Sigma\] is the energy and \[p^i=\frac{1}{8\pi}  \int_\Sigma \left( \rho T_0^i+(\nabla_a X^i) j^a \right) d\Sigma, i=1, 2, 3\] corresponds to the linear momentum.

Therefore, we can write the quasi-local conserved quantity as an element $\Phi^{\alpha\gamma}$ in the dual space of $K_{\alpha\gamma}$:

\[\Phi^{\alpha\gamma}=\frac{-1}{16\pi} \int_\Sigma \left[ (X^\alpha  T_0^\gamma- X^\gamma  T_0^\alpha)\rho +[X^\alpha( \nabla_a X^\gamma)- X^\gamma (\nabla_a X^\alpha)]j^a \right]d\Sigma \] where $\Phi^{\alpha\gamma}=-\Phi^{\gamma\alpha}$.  

\subsection{Change of quasi-local center of mass and angular momentum with respect to reference frame.}
First we compute the change of quasi-local center of mass and angular momentum with respect to reference frame. This is analogous to equation  \eqref{newtonian2}.
\begin{theorem} \label{general1}
Suppose $X$ is an isometric embedding of the surface $\Sigma$ into $\R^{3,1}$. Let $b$ be a vector in $\R^{3,1}$ and consider the isometric embedding $X'=X+b$. Let $p^\gamma$ be the quasi-local energy-momentum 4-vector with respect to the isometric embedding $X$ and  $\Phi^{\alpha\gamma}$ and $\Phi'^{\alpha\gamma}$ be the quasi-local conserved quantities with respect to $X$ and $X'$. Then we have
\[\Phi'^{\alpha\gamma}=\Phi^{\alpha\gamma}-\frac{b^\alpha p^\gamma}{2}+\frac{b^\gamma p^\alpha}{2}.\]
\end{theorem}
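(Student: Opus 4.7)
\medskip

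My plan is to prove Theorem \ref{general1} by direct expansion after establishing the invariance of the auxiliary quantities $\rho$, $j^a$, and $\nabla_a X^\gamma$ under the shift $X \mapsto X' = X + b$. The strategy splits into two short steps: an invariance check and an algebraic rearrangement.

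\medskip

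\textbf{Step 1: Invariance of the integrand data.} I would first observe that since $b \in \R^{3,1}$ is constant along $\Sigma$, the tangential derivatives satisfy $\nabla_a X'^\gamma = \nabla_a X^\gamma$. The induced metric $\sigma$ on $\Sigma$ is unchanged, and since the isometric embedding $X' = X + b$ differs from $X$ by a Minkowski translation (an isometry of $\R^{3,1}$), the image surface is congruent to the image of $X$. Consequently the norm $|H_0|$ of the mean curvature vector and the connection one-form $\alpha_{H_0}$ in the mean curvature gauge are preserved. The physical data $|H|$, $\alpha_H$ on $\Sigma$ depend only on the spacetime geometry and are untouched. Finally, for the chosen observer $T_0$ (which is a Killing field on $\R^{3,1}$ and hence unaffected by the translation), one has
\[
\tau' = -\langle X', T_0 \rangle = \tau - \langle b, T_0 \rangle,
\]
and $\langle b, T_0 \rangle$ is a constant on $\Sigma$, so $\nabla \tau' = \nabla \tau$ and $\Delta \tau' = \Delta \tau$. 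Combining these observations with the formula \eqref{rho} for $\rho$ and the definition \eqref{optimal_embedding_vector} of $j^a$ shows that both $\rho$ and $j^a$ are invariant under the shift.

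\medskip

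\textbf{Step 2: Expansion.} Substituting $X'^\alpha = X^\alpha + b^\alpha$ into the expression for $\Phi'^{\alpha\gamma}$ and using the invariances from Step 1, the cross terms separate cleanly:
\[
\Phi'^{\alpha\gamma} = \Phi^{\alpha\gamma} + \frac{-1}{16\pi}\int_\Sigma \left[(b^\alpha T_0^\gamma - b^\gamma T_0^\alpha)\rho + (b^\alpha \nabla_a X^\gamma - b^\gamma \nabla_a X^\alpha) j^a \right] d\Sigma.
\]
Pulling the constants $b^\alpha$ and $b^\gamma$ out of the integral and using the definition $p^\gamma = \frac{1}{8\pi}\int_\Sigma(\rho T_0^\gamma + \nabla_a X^\gamma\, j^a) d\Sigma$ identifies the two resulting integrals with $8\pi p^\gamma$ and $8\pi p^\alpha$, yielding precisely $-\tfrac{1}{2} b^\alpha p^\gamma + \tfrac{1}{2} b^\gamma p^\alpha$.

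\medskip

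\textbf{Anticipated difficulty.} The computation itself is a one-line expansion, so the only conceptual point requiring care is the invariance of $\alpha_{H_0}$ and $|H_0|$ in Step 1. These are geometric quantities attached to the image of the isometric embedding, and one must be slightly careful that the canonical gauge used to define $\alpha_{H_0}$ (mean curvature gauge in the normal bundle of the image in $\R^{3,1}$) is itself translation-invariant; this follows because Minkowski translations preserve the normal bundle structure and the mean curvature vector. Once this is noted, the remainder of the argument is purely algebraic.
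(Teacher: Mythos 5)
Your proposal is correct and follows essentially the same route as the paper: the authors likewise note that $\rho$ and $j^a$ are invariantly defined (since $|H_0|$, $\alpha_{H_0}$ are unchanged under the isometry of $\R^{3,1}$) and then expand $\Phi'^{\alpha\gamma}-\Phi^{\alpha\gamma}$ into the cross terms $(b^\alpha T_0^\gamma-b^\gamma T_0^\alpha)\rho+(b^\alpha\nabla_a X^\gamma-b^\gamma\nabla_a X^\alpha)j^a$, identified with $-\tfrac{1}{2}b^\alpha p^\gamma+\tfrac{1}{2}b^\gamma p^\alpha$. Your Step 1 simply spells out in more detail (e.g.\ $\tau'=\tau-\langle b,T_0\rangle$ having the same gradient and Laplacian) what the paper asserts in one line.
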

\begin{proof}
As $|H_0|$, $\alpha_{H_0}$ are all invariant under the Lorentzian group action on the image of the isometric embedding $X:\Sigma\rightarrow \R^{3,1}$, $\rho$ and $j^a$ are invariantly defined.  From the expression, $\Phi^{\alpha\gamma}$ is equivariant with respect to the $SO(3,1)$ action on $X$ and $T_0$.  Suppose the isometric embedding $X$ is shifted by $(X')^\alpha = X^\alpha+b^\alpha$ for a constant 4-vector $b^\alpha$. The new 
$\Phi'^{\alpha\gamma}$ is given by 
\[
\begin{split}\Phi'^{\alpha\gamma}-\Phi^{\alpha\gamma}
= & \frac{-1}{16\pi} \int_\Sigma \left[ (b^\alpha  T_0^\gamma- b^\gamma  T_0^\alpha)\rho +(b^\alpha \nabla_a X^\gamma- b^\gamma\nabla_aX^\alpha)j^a \right]d\Sigma \\
= &  \frac{1}{2}(-b^\alpha p^\gamma+b^\gamma p^\alpha).
\end{split}
\]
\end{proof}

Next, we relate the quasi-local linear momentum $p^i$ to the variation of quasi-local energy. This is analogous to equation  \eqref{newtonian1}.
\begin{theorem}\label{general2}
Consider the function 
\[ f(s) = E(\Sigma, X,  \sqrt{1+s^2}\frac{\partial}{\partial X^0} + s \frac{\partial}{\partial X^i}, \sqrt{1+s^2}\frac{\partial}{\partial X^0} + s \frac{\partial}{\partial X^i} )  \]
That is, $f(s)$ is the quasi-local energy with respect to the embedding $X$ and the observer $\sqrt{1+s^2}\frac{\partial}{\partial X^0} + s \frac{\partial}{\partial X^i}$. 
Then
\[  f'(0) =  E(\Sigma, X,  \frac{\partial}{\partial X^0},  \frac{\partial}{\partial X^i} )\] 
\end{theorem}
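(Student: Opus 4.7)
The central observation is that in formula \eqref{qle}, the quasi-local energy $E(\Sigma, X, T_0)$ depends on the observer $T_0$ only through the function $\tau = -\langle X, T_0\rangle$, because $|H|$, $|H_0|$, $\alpha_H$, and $\alpha_{H_0}$ are intrinsic data of $\Sigma$ and its image $X(\Sigma)\subset\R^{3,1}$ with no direct dependence on $T_0$. Writing
\[\tau(s) = -\langle X, T_0(s)\rangle = \sqrt{1+s^2}\,X^0 - s\,X^i,\]
one reads off $\tau(0)=X^0$ and $\dot\tau(0) = -X^i$. Viewing the energy as a functional $\mathcal E(\tau)$ of $\tau$ alone, the chain rule gives $f'(0) = (D\mathcal E)(X^0)\cdot(-X^i)$.

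The first variation of $\mathcal E$ is precisely the Euler--Lagrange computation that produces the optimal isometric embedding equation \eqref{optimal3}: integration by parts in the integrand of \eqref{qle} converts $(D\mathcal E)(\tau)\cdot\eta$ into $-\frac{1}{8\pi}\int_\Sigma \eta\cdot\mathrm{div}_\sigma(j(\tau))\,d\Sigma$, where $j^a(\tau)$ is the vector in \eqref{optimal_embedding_vector}. Inserting $\eta=-X^i$ at $\tau=X^0$ and applying the divergence theorem once more on the closed surface $\Sigma$ yields
\[f'(0) = -\frac{1}{8\pi}\int_\Sigma j^a\,\partial_a X^i\,d\Sigma.\]
On the other hand, specializing \eqref{qlcq2} to $T_0 = \partial/\partial X^0$ and $K = \partial/\partial X^i$, the inner product $\langle K, T_0\rangle = 0$ kills the first term, while the tangential projection of $K$ has components $(K^\top)^a = \sigma^{ab}\partial_b X^i$; pairing $K^\top$ against the bracketed 1-form in \eqref{qlcq2} produces exactly $j^a\,\partial_a X^i$, so that $E(\Sigma, X, \partial/\partial X^0, \partial/\partial X^i)$ equals the same integral.

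\textbf{The main obstacle} is pinning down the overall sign of the first-variation formula: equation \eqref{optimal3} is invariant under an overall sign flip and by itself does not distinguish between $(D\mathcal E)(\tau)\cdot\eta = \pm\frac{1}{8\pi}\int_\Sigma \eta\,\mathrm{div}_\sigma(j)\,d\Sigma$. I would resolve this by directly integrating by parts in a representative piece of \eqref{qle}; for example, the variation of $-\alpha_{H_0}(\nabla\tau)+\alpha_H(\nabla\tau)$ contributes $+\mathrm{div}_\sigma(\alpha_{H_0}^\sharp-\alpha_H^\sharp)$ to the Euler--Lagrange expression, which is the negative of the corresponding piece of $\mathrm{div}_\sigma(j)$, thereby fixing the minus sign in the variational identity and ensuring the signs match in the final comparison.
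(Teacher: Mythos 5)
Your proposal is correct and takes essentially the same route as the paper: both reduce the $T_0$-dependence of the energy to the time function $\tau$, use $\partial_s\tau(s)|_{s=0}=-X^i$ together with the first variation formula of the quasi-local energy (whose variational density is $\frac{1}{8\pi}\,\mathrm{div}_\sigma\, j$ up to sign), and identify $E(\Sigma, X, \frac{\partial}{\partial X^0}, \frac{\partial}{\partial X^i})$ with the same integral $\int_\Sigma \nabla_a X^i\, j^a\, d\Sigma$ (up to the common prefactor) via $(\frac{\partial}{\partial X^i})^\top=\nabla X^i$ and integration by parts. The only difference is cosmetic: the paper cites the first variation formula of \cite{Wang-Yau2} (stated in the form of Definition 2.1 and identified with $\nabla_a j^a$), whereas you fix the sign by a direct check on the $\alpha_H$ term, which is a legitimate way to resolve the same point.
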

\begin{proof}
Recall that the expression of $E$ is given in equation \eqref{qlcq2}. Hence, we have
\[
E(\Sigma, X,  \frac{\partial}{\partial X^0},  \frac{\partial}{\partial X^i} ) =\frac{1}{8\pi} \int_\Sigma
 (\frac{\partial}{\partial X^i})^\top\cdot \left(  \rho {\nabla \tau }- \nabla[ \sinh^{-1} (\frac{\rho\Delta \tau }{|H_0||H|})]-\alpha_{H_0}  + \alpha_{H} \right ) d \Sigma.
\]
Moreover, we have
\[ 
 (\frac{\partial}{\partial X^i})^\top =\nabla X^i.
 \]
As a result, 
\[
\begin{split}
   E(\Sigma, X,  \frac{\partial}{\partial X^0},  \frac{\partial}{\partial X^i} ) 
=& \frac{1}{8\pi} \int_\Sigma  \nabla_a X^i j^a  d \Sigma\\
= & \frac{1}{8\pi} \int_\Sigma  (-X^i)   \nabla_a j^a d \Sigma.
\end{split}
\]
On the other hand,  $f(s)$ corresponds to the quasi-local energy with respect to the same embedding $X$ with the observer $T(s)= \sqrt{1+s^2}\frac{\partial}{\partial X^0} + s \frac{\partial}{\partial X^i}$. Let $ \tau(s) $ be the time function with respect to embedding $X$ and  the observer $T(s)$.
We have
\[   \tau(s) = \sqrt{1+s^2}\tau(0) - s X^i. \]
In particular, 
\[  \partial_s  \tau(s) |_{s=0} =-X^i.   \]
Moreover, we have
\[
-(\widehat{H}\hat{\sigma}^{ab} -\hat{\sigma}^{ac} \hat{\sigma}^{bd} \hat{h}_{cd})\frac{\nabla_b\nabla_a \tau}{\sqrt{1+|\nabla\tau|^2}}+ div_\sigma (\frac{\nabla\tau}{\sqrt{1+|\nabla\tau|^2}} \cosh\theta|{H}|-\nabla\theta-\alpha_{H}) 
= \nabla_a j^a
 \]
and thus, by the first variation formula of quasi-local energy in \cite{Wang-Yau2}, it follows that 
\[ f'(0)=  \frac{1}{8\pi} \int_\Sigma ( -X^i)    \nabla_a j^a d \Sigma.  \]
\end{proof}
From Theorem \ref{general2}, we have
\begin{corollary}
Given a surface $\Sigma$ in $N$, suppose $(X,T_0)$ is a solution of the optimal embedding equation, then the quasi-local linear momentum $p^i$ vanishes.
\end{corollary}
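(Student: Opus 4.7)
The plan is short and uses only the optimal embedding equation in its divergence form \eqref{optimal3} together with integration by parts on the closed surface $\Sigma$. By the $SO(3,1)$-equivariance of the formulas noted just after the definition of the quasi-local mass density, there is no loss in assuming the canonical frame $T_0 = \partial/\partial X^0$. In that frame $T_0^i = 0$ for $i = 1, 2, 3$, so the definition of the spatial linear momentum collapses to
$$p^i = \frac{1}{8\pi} \int_\Sigma (\nabla_a X^i)\, j^a \, d\Sigma,$$
which is precisely $E(\Sigma, X, \partial/\partial X^0, \partial/\partial X^i)$, the quantity that Theorem \ref{general2} already rewrites as $\tfrac{1}{8\pi}\int_\Sigma (-X^i)\,\nabla_a j^a \, d\Sigma$ via integration by parts.

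The key (and only) step is then to invoke the hypothesis that $(X, T_0)$ solves the optimal isometric embedding equation. Inspecting \eqref{optimal_embedding_vector} and \eqref{optimal3}, the optimal embedding equation is exactly $\nabla_a j^a = 0$. Substituting this into the displayed expression gives $p^i = 0$ immediately.

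A conceptually cleaner alternative phrasing uses Theorem \ref{general2} directly: it identifies $p^i$ with $f'(0)$, the derivative of quasi-local energy along the one-parameter family of boost observers $T(s) = \sqrt{1+s^2}\,\partial/\partial X^0 + s\, \partial/\partial X^i$ at fixed embedding $X$. Since $(X, T_0)$ is a critical point of the quasi-local energy functional, and since, by equivariance, an infinitesimal boost of $T_0$ with $X$ fixed is the same variation as an infinitesimal Lorentz rotation of $X$ with $T_0$ fixed (an allowable variation through isometric embeddings), this derivative vanishes and $p^i = 0$ again.

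I do not anticipate a genuine obstacle: the corollary is essentially a tautological re-reading of the divergence form \eqref{optimal3} of the optimal embedding equation as the statement that the spatial components of the quasi-local energy-momentum 4-vector vanish in the canonical frame.
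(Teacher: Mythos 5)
Your proposal is correct and follows essentially the paper's own route: the paper deduces this corollary from Theorem \ref{general2}, whose proof is exactly your integration by parts identifying $\frac{1}{8\pi}\int_\Sigma (\nabla_a X^i)\, j^a\, d\Sigma$ with $\frac{1}{8\pi}\int_\Sigma (-X^i)\,\nabla_a j^a\, d\Sigma$, and the optimal embedding equation \eqref{optimal3} is precisely $\nabla_a j^a=0$ for the $j^a$ of \eqref{optimal_embedding_vector}. Your reduction to the frame $T_0=\partial/\partial X^0$ via $SO(3,1)$-equivariance matches the paper's implicit convention (momentum components relative to the observer $T_0$), so nothing further is needed.
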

Combining Theorem \ref{general1} and the above corollary, we have
\begin{corollary}
Given a surface $\Sigma$ in $N$, suppose the pair $(X,T_0)$ is a solution of the optimal embedding equation, then for any constant vector $b$, the angular momentum with respect to the embedding $X$ is the same as that of $X'=X+b$.
\end{corollary}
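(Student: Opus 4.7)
The plan is to read this as a direct consequence of Theorem \ref{general1} applied to an embedding where the spatial components of the quasi-local momentum 4-vector are known to vanish, namely any solution of the optimal isometric embedding equation by the previous corollary.

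First I would recall the transformation law proved in Theorem \ref{general1}: shifting $X$ to $X' = X + b$ changes the conserved quantity tensor by
\[
\Phi'^{\alpha\gamma} - \Phi^{\alpha\gamma} = -\tfrac{1}{2} b^\alpha p^\gamma + \tfrac{1}{2} b^\gamma p^\alpha,
\]
where $p^\gamma$ is the energy-momentum 4-vector of $(X,T_0)$. The angular momentum integrals in Definition \ref{ql_conserved} are exactly the pairings of $\Phi^{\alpha\gamma}$ with the rotation generators $A(X^i\partial/\partial X^j - X^j \partial/\partial X^i)$, i.e.\ (after using the $SO(3,1)$-equivariance of $\Phi^{\alpha\gamma}$ to rotate $T_0$ to $\partial/\partial X^0$) they are the spatial-spatial components $\Phi^{ij}$ with $i,j \in \{1,2,3\}$.

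Next I would invoke the corollary to Theorem \ref{general2}, which gives $p^i = 0$ for $i = 1,2,3$ whenever $(X,T_0)$ solves the optimal isometric embedding equation. Substituting into the transformation law with $\alpha,\gamma$ spatial then yields
\[
\Phi'^{ij} - \Phi^{ij} = -\tfrac{1}{2} b^i p^j + \tfrac{1}{2} b^j p^i = 0,
\]
which is precisely the claim that the quasi-local angular momentum is invariant under the translation $X \mapsto X + b$.

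There is essentially no obstacle here; the only point requiring a touch of care is to make sure that the transformation law of Theorem \ref{general1} is being applied in the same Lorentz frame used to define angular momentum via the formula in Definition \ref{ql_conserved}. This is handled by first rotating $T_0$ to $\partial/\partial X^0$ using the equivariance of $\Phi^{\alpha\gamma}$ under $SO(3,1)$ (noted in the proof of Theorem \ref{general1}), after which ``angular momentum'' is simply the spatial block $\Phi^{ij}$ and the corollary of Theorem \ref{general2} supplies the vanishing of the relevant momenta.
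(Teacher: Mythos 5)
Your proposal is correct and follows exactly the paper's route: the paper states this corollary as an immediate consequence of combining Theorem \ref{general1} with the corollary to Theorem \ref{general2} (vanishing of the quasi-local linear momentum $p^i$ for an optimal embedding), which is precisely your argument. The extra care you take with the $SO(3,1)$-equivariance to identify the angular momentum with the spatial block $\Phi^{ij}$ in the frame where $T_0=\frac{\partial}{\partial X^0}$ is consistent with Definition \ref{ql_conserved} and fills in the step the paper leaves implicit.
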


\section{Quasi-local conservation law}

An important ingredient of our proof of the invariance of total angular momentum is a conservation law of the quasi-local conserved quantity. In view of the definition of $\rho$,
the quasi-local conserved quantity can be written as the difference of a reference term and a physical term. 

\begin{equation}\begin{split}&E(\Sigma, X, T_0, K)\\
=&\frac{(-1)}{8\pi} \int_\Sigma
\{\langle K, T_0\rangle \frac{(\cosh\theta_0 |H_0|-\cosh\theta |H|)}{\sqrt{1+|\nabla\tau|^2}}\\
+&K^\top\cdot [  \frac{\nabla \tau (\cosh\theta_0 |H_0|-\cosh\theta |H|)}{\sqrt{1+ |\nabla \tau|^2}}- \nabla(\theta_0 - \theta) -\alpha_{H_0} + \alpha_{H} ]\}d\Sigma.
\end{split}\end{equation} 

Given $(X, T_0)$, denote the image of $X$ in $\R^{3,1}$ by $\Sigma$. We recall that $\widehat{\Sigma}$ is the 2-surface that is the projection of the image of the isometric embedding $X$ onto the orthogonal complement of $T_0$. Let $\mathcal{C}$ be the timelike cylinder that is generated by $T_0$ along $\widehat{\Sigma}$. Both $\Sigma$ and $\widehat{\Sigma}$ are contained in $\mathcal{C}$. 
The reference term is an integral on the image of the isometric embedding that can be rewritten as (up to a scalar multiple)
\[\begin{split}&\int_\Sigma
\{\langle K, T_0\rangle \frac{(\cosh\theta_0 |H_0|)}{\sqrt{1+|\nabla\tau|^2}}
+K^\top\cdot [  \frac{\nabla \tau (\cosh\theta_0 |H_0|)}{\sqrt{1+ |\nabla \tau|^2}}- \nabla \theta_0  -\alpha_{H_0} ]\}d\Sigma\\
&=\int_\Sigma
[\langle K, \breve{e}_4\rangle (\cosh\theta_0 |H_0|)
-K^\top\cdot \left( \nabla \theta_0 +\alpha_{H_0} \right)]d\Sigma\end{split}\] where $\breve{e}_4= \frac{1}{\sqrt{1+|\nabla\tau|^2}}(T_0+\nabla\tau)$.

If $K$ is tangential to $\mathcal{C}$, this can be further simplified as 
\[\int_\Sigma
\pi(K, \breve{e}_4) d\Sigma \] where $\pi$ is the conjugate momentum of the timelike cylinder $\mathcal{C}$ with respect to $\breve{e}_3$, the unit outward spacelike unit normal of $\mathcal{C}$. Note that $\breve{e}_4$ is the timelike unit normal of $\Sigma$ in $\mathcal{C}$, and $\breve{e}_3$ and $\breve{e}_4$ are orthogonal along $\Sigma$. 

Since $\pi$ satisfies the vacuum constraint equation  and $K$ is a Killing field, $\pi(K, \cdot)$ is a divergence free on $\mathcal{C}$ and thus by applying the divergence theorem to $\Omega$, the portion of $\mathcal{C}$ bounded by $\Sigma$ and $\widehat{\Sigma}$, we obtain
\[\int_\Sigma
\pi(K, N) d\Sigma =\int_{\widehat{\Sigma}} \pi (K, \widehat{N}) d\hat{\Sigma}\]  where $N$ and $\widehat{N}$ are the future timelike unit normal of $\Sigma$ and $\widehat{\Sigma}$, respectively as boundary components of $\Omega$.

As $\widehat{N}=T_0$ on $\widehat{\Sigma}$, this is equal to 
\[\int_{\widehat{\Sigma}} \pi (K, \breve{e}_4) d\hat{\Sigma}=\int_{\widehat{\Sigma}} \hat{k}\langle K, T_0 \rangle d\hat{\Sigma}.\]

We can also apply this to the physical term
\[\int_\Sigma\{
\langle K, T_0\rangle \frac{(\cosh\theta |H|)}{\sqrt{1+|\nabla\tau|^2}}
+K^\top\cdot [ \frac{\nabla \tau (\cosh\theta |H|)}{\sqrt{1+ |\nabla \tau|^2}}- \nabla \theta  -\alpha_{H}]\}d\Sigma\] and relate this to another surface term on a time slice in the physical spacetime. There will be error terms in applying this formula, but they often can be controlled under various conditions. 

We remark that this conservation law was observed by Brown-York \cite{Brown-York1, Brown-York2}. The novelty here is that this law is applied to both the physical and reference Minkowski spacetime. 

\section{Quasi-local conserved quantities in an axially symmetric spacetime}
For a spacetime with symmetry, it is useful to construct conserved quantity using the Killing vector field. For an axially symmetric vacuum spacetime, this procedure gives the well-known Komar angular momentum. In this section, we show that for an axially symmetric vacuum spacetime $N$, our quasi-local angular momentum agrees with the Komar angular momentum. For completeness, we first recall the Komar construction. Suppose $\eta$ is a Killing vector field in $N$. Then
\[  \nabla^N_\alpha  \eta_{\beta} + \nabla^N_\beta  \eta_{\alpha}  = 0, \]
it follows that $\nabla^N_\alpha  \eta_{\beta} $ is a two-form. Let $S_{\alpha\beta}=\star \nabla^N_\alpha  \eta_{\beta} $ be its Hodge dual. In a vacuum spacetime, one can verify that 
$S_{\alpha\beta}$ is closed and co-closed. As a result, the integral 
\[  \frac{-1}{8 \pi} \int_{\Sigma} S_{\alpha \beta} \]
depends only on the homology class of $\Sigma$. 

For an axially symmetric vacuum spacetime, the Komar angular momentum is obtained by applying the above construction to the rotation Killing vector field $\frac{\partial}{\partial \phi}$. In particular, for an axially symmetric surface $\Sigma$  in an axially symmetric spacetime, the Komar angular momentum reduces to 
\[ J= \frac{1}{8 \pi}\int_{\Sigma} \langle \nabla^N_{\frac{\partial}{\partial \phi}} e_3, e_4 \rangle d\Sigma \]
where $\{  e_3,e_4 \}$ is any frame of the normal bundle of $\Sigma$ with $e_3$ is an outward spacelike unit normal and $e_4$ is a future directed timelike unit normal. This expression is independent of the choice of frame because $\frac{\partial}{\partial \phi}$ is Killing. 

\begin{proposition}
Let $\Sigma$ be an axially symmetric surface in an axially symmetric spacetime. Let $\tau$ be an axially symmetric function on $\Sigma$.  Consider the isometric embedding $X$ of $\Sigma$ into $\R^{3,1}$ with time function $\tau=-\langle X, T_0\rangle$. Let $K$ be the rotation Killing field in $\R^{3,1}$ that corresponds to the symmetry on the image of the isometric embedding $X$. Then 
\[  E(\Sigma, X, T_0 , K) = J.\]  
\end{proposition}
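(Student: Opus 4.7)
The plan is to exploit axial symmetry to strip the expression \eqref{qlcq2} for $E(\Sigma, X, T_0, K)$ down to the single term involving $\alpha_{H_0}-\alpha_H$ contracted with $K$, and then identify what remains with the Komar integrand while showing the reference piece vanishes. First I would record three consequences of the symmetry assumptions. Because $K$ is a rotation Killing field in $\R^{3,1}$ that preserves the image of $X$ and because $\tau = -\langle X, T_0\rangle$ is axially symmetric, the rotation generated by $K$ must fix $T_0$, so that $\langle K, T_0\rangle \equiv 0$ along the image (and hence, via the canonical gauge identification, along $\Sigma$). This kills the first term in \eqref{qlcq2}. Next, $K$ is tangent to $\Sigma$, so $K^{\top} = K$. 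Finally, $\tau$, $|H|$, $|H_0|$, $\rho$ and $\Delta\tau$ are all axially symmetric, so $K(\tau) = 0$ and $K\bigl(\sinh^{-1}[\rho\Delta\tau/(|H_0||H|)]\bigr) = 0$, which kills the middle two terms. The expression collapses to
\begin{equation*}
E(\Sigma, X, T_0, K) = \frac{1}{8\pi}\int_\Sigma K\cdot\bigl(\alpha_{H_0} - \alpha_H\bigr)\, d\Sigma.
\end{equation*}

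The next step is to identify the physical piece with $J$. Fixing the mean curvature gauge $e_3 = H/|H|$ and $e_4 = J/|H|$, the definition $\alpha_H(\cdot) = \langle \nabla^N_{(\cdot)}\frac{J}{|H|}, \frac{H}{|H|}\rangle$ gives $\alpha_H(K) = \langle \nabla^N_K e_4, e_3\rangle = -\langle \nabla^N_K e_3, e_4\rangle$ after differentiating the orthogonality relation $\langle e_3, e_4\rangle = 0$. Because $K$ is Killing, the quantity $\langle \nabla^N_K e_3, e_4\rangle$ is independent of the choice of orthonormal normal frame, so it agrees with the Komar integrand and $-\frac{1}{8\pi}\int_\Sigma \alpha_H(K)\, d\Sigma = J$.

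The main obstacle is showing that the reference contribution $\frac{1}{8\pi}\int_\Sigma K\cdot\alpha_{H_0}\, d\Sigma$ vanishes. I would handle this via the quasi-local conservation law from the previous section, applied inside $\R^{3,1}$: since $K$ preserves $T_0$, it is tangent to the Killing cylinder $\mathcal{C}$ generated by $T_0$ along $\widehat\Sigma$, and applying the divergence theorem to the portion of $\mathcal{C}$ bounded by $\Sigma$ and $\widehat\Sigma$ rewrites the reference term as $\int_{\widehat\Sigma}\hat{k}\,\langle K, T_0\rangle\, d\hat\Sigma$, which vanishes identically because $\langle K, T_0\rangle \equiv 0$. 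An alternative route is to recognize this integral, up to sign, as the Minkowski Komar angular momentum of the image surface, which is zero because the relevant 2-form is exact on the simply connected flat vacuum background $\R^{3,1}$. Combining the two identifications yields $E(\Sigma, X, T_0, K) = 0 + J = J$, as desired.
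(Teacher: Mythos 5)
Your proposal is correct and follows essentially the same route as the paper: axial symmetry kills the $\langle K,T_0\rangle\rho$ and gradient terms, the reference contribution is converted via the cylinder conservation law of Section 5 into an integral over $\widehat\Sigma$ that vanishes (your $\langle K,T_0\rangle=0$ justification and the paper's ``totally geodesic hyperplane'' remark amount to the same computation), and the physical term is identified with the Komar integrand through frame-independence of $\langle\nabla^N_{K}e_3,e_4\rangle$ for the Killing field $\partial/\partial\phi$. The only cosmetic differences are that you work directly with the $\rho$, $j_a$ form \eqref{qlcq2} rather than the paper's reference-minus-physical splitting, and your alternative homological argument (the Minkowski Komar integral vanishes since the image surface bounds) is a valid shortcut the paper does not use.
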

\begin{proof} Since the 2-surface $\widehat{\Sigma}$ in the orthogonal complement of $T_0$ is also axially-symmetric, $K$ and $T_0$ are orthogonal. 
Recall that the angular momentum with respect to $K$ is
\[\int_{\Sigma}[K^\top\cdot (  \rho {\nabla \tau }- \nabla[ \sinh^{-1} (\frac{\rho\Delta \tau }{|H_0||H|})]-\alpha_{H_0}  + \alpha_{H})]d\Sigma.\]
 By assumption, the surface and the time function are both axially symmetric, it follows that 
$K$ is tangent to the image of $X$ and its pull back to the surface in the physical spacetime is the restriction of the rotation Killing vector field $\frac{\partial}{\partial \phi}$. 

From the discussion in 
Section 4 and that $K$ is a Killing vector field in $\R^{3,1}$, the reference term is
$\int_{\Sigma} \pi (K, \breve e_4) d \Sigma$. Moreover, 

\[
\int_{\Sigma} \pi (K, \breve e_4) d \Sigma =\int_{\widehat{\Sigma}} \pi (K, T_0 ) d\widehat{\Sigma}.
\]  
However, since $\widehat{\Sigma}$ lies in a totally geodesic hyperplane, we have
\[ \pi (K, T_0)=0  \]
and it follows that the reference term is zero. 

On the other hand, the physical term is simply
\[ -\frac{1}{8 \pi}\int_{\Sigma}\alpha_H(\frac{\partial}{\partial \phi})  d \Sigma,  \]
since the time function $\tau$ and the angle $\theta$ are both independent of $\phi$.
As a result, 
\[ E(\Sigma, X, T_0 , K) =  \frac{1}{8 \pi}\int_{\Sigma}\alpha_H(\frac{\partial}{\partial \phi}) d \Sigma  \]
 This finishes the proof of the lemma since 
\[ J= \frac{1}{8 \pi}\int_{\Sigma} \langle \nabla_{\frac{\partial}{\partial \phi}} e_3, e_4 \rangle  d \Sigma \]
where $\{  e_3,e_4 \}$ is any frame of the normal bundle of $\Sigma$ with $e_3$ is an outward spacelike unit normal and $e_4$ is a future directed timelike unit normal.
\end{proof}

\section{Finiteness of total angular momentum and total center of mass}
We recall how the optimal isometric embedding equation is solved for asymptotically flat initial data set of order $1$ in \cite{Chen-Wang-Yau1}. Given $(\sigma, |H|, \alpha_H)$, we look for an isometric embedding $X=(X^0, X^1, X^2, X^3)$ into the Minkowski spacetime and a $T_0$ such that 
\begin{equation}
\begin{cases}
& \langle dX, dX\rangle=\sigma\\
&div_\sigma\left(\rho \nabla \tau - \nabla [ \sinh^{-1} (\frac{\rho \Delta \tau }{|H_0||H|})] - \alpha_{H_0} + \alpha_{H}\right)=0. 
\end{cases}
\end{equation} where $\tau=-\langle X, T_0\rangle$ and $\rho$ is given by \eqref{rho}.

We will indeed solve the equation for each coordinate sphere $\Sigma_r$ and expand the solution $X({r}), T_0({r})$ in power series of $r$.  We start with $X^i({r})=r\tilde{X}^i+ (X^i)^{(0)}+O(r^{-1})$. As we emphasize, this choice anchor the optimal isometric embedding. 

$X^0({r})=(X^0)^{(0)}+O(r^{-1})$ and $X^i({r})=r\tilde{X}^i+ (X^i)^{(0)}+O(r^{-1})$, for functions $(X^0)^{(0)}$ and $(X^i)^{(0)}$ of order $O(1)$. 

It turns out by Lemma 4 in \cite{Chen-Wang-Yau1}, $({X}^i)^{(0)}$ satisfies the linear PDE system:

\begin{equation}\label{linearized_isom} 2\sum_{i=1}^3 d \tilde{X}^i \cdot d({X}^i)^{(0)}=\sigma^{(1)}.\end{equation} This can be solved by writing $({X}^i)^{(0)}=p^a \tilde \nabla_a \tilde{X}^i+ v \tilde{X}^i$ and solve
for a $(0, 1)$ tensor $p^a$ and a function $v$ on $S^2$. With $(X^i)^{(0)}$, we compute
the mean curvature of the isometric embedding $H_0$ whose norm is given by  
$|H_0| ={2}{r}^{-1}+r^{-2}h_0^{(-2)}+ O(r^{-3})$, where
\[h_0^{(-2)}=-\tilde{X}^i\widetilde{\Delta}({X}^i)^{(0)}-\tilde{\sigma}^{ab}\sigma_{ab}^{(1)}.\]  The integral, by \eqref{linearized_isom}, is 
\[\int_{S^2} h_0^{(-2)}=-\frac{1}{2}\int_{S^2}\tilde{\sigma}^{ab} \sigma_{ab}^{(1)}.\]

We then write $T_0({r})=(a^0, a^1, a^2, a^3)+O(r^{-1})$ and proceed to solve $(a^0, a^1, a^2, a^3)$ and $(X^0)^{(0)}$. Here $\tau=-\langle X, T_0\rangle= -r a_i \tilde{X}^i+O(1)$. Let $\alpha_{H_0}$ be the one form  { $\langle  \nabla^{\R^{3,1}}_{(\cdot) } \frac{ J_0 }{|H_0|}, \frac{H_0}{|H_0|} \rangle$} on the image of ${X}_r$
then
\begin{equation}\label{alpha_H_0}
(\alpha_{H_0})_a=\frac{1}{2}\tilde \nabla_a  \left(\widetilde \Delta  (X^0)^{(0)}+2 (X^0)^{(0)}\right) r^{-1}+ O(r^{-2}).   
\end{equation}

The leading $r^{(-3)}$ term of the optimal isometric embedding equation is the following elliptic equation on $S^2$

 \begin{equation}\label{linearized_optimal}\widetilde{div}(\rho^{(-2)}\widetilde{\nabla}\tau^{(1)})-\frac{1}{4} \widetilde{\Delta} (\rho^{(-2)}\widetilde{\Delta} \tau^{(1)})-\frac{1}{2}\widetilde{\Delta}(\widetilde{\Delta}+2)(X^0)^{(0)}+\widetilde{div}( \alpha_H^{(-1)})=0\end{equation} 
 where
\begin{equation}\label{rho-2} \rho^{(-2)}=\frac{h_0^{(-2)}-h^{(-2)}}{a^0}\end{equation} and \[\tau^{(1)} =-\sum_{i=1}^{3} a_{i} \tilde X^i.\] 

The unknowns in \eqref{linearized_optimal} are $(X^0)^{(0)}$ and $(a^0, a^i)$.
For $(X^0)^{(0)}$ to be solvable, the integral against  any element in the kernel of the operators $\widetilde{\Delta}+2$, or $\tilde{X}^k, k=1, 2, 3$ must be zero. We derive that $(a^0, a^1, a^2, a^3)$ must satisfies 

\[a^k \int_{S^2}  (h_0^{(-2)}-h^{(-2)}) dS^2=a^0\int_{S^2} \tilde{X}^k \widetilde{div}(\alpha_H^{(-1)})\]

Since  the integral of $\rho$ gives the quasi-local mass, the leading term $m$ is given by 

\[m=\frac{1}{8\pi}\int_{S^2} \rho^{(-2)}dS^2=\frac{1}{8\pi}\frac{1}{a^0} \int_{S^2}  (h_0^{(-2)}-h^{(-2)}) dS^2.\]

We have 

\[\frac{1}{8 \pi}\int_{S^2} (h_0^{(-2)}-h^{(-2)}) dS^2=ma^0=e\] is the total energy and 

\[\frac{1}{8 \pi} \int_{S^2} \tilde{X}^i \widetilde{div}(\alpha_H^{(-1)})  dS^2=m a^i=p^i\] is the total momentum.

\begin{proposition} \label{condition_finite}
The total center of mass and total angular momentum are finite if  
\begin{equation}\label{condition_converge}
\begin{split}
\int_{S^2} \tilde X^i \rho^{(-2)} dS^2 =0\text{ and } \int_{S^2}  \tilde X^i \left( \tilde{\epsilon}^{ab}\tilde{\nabla}_b(\alpha_H^{(-1)})_a \right) dS^2=0
\end{split}
\end{equation}
\end{proposition}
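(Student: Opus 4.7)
The plan is to insert the asymptotic expansions developed in this section into
\[ E(\Sigma_r,X_r,T_0(r),K)=-\frac{1}{8\pi}\int_{\Sigma_r}\bigl[\langle K,T_0\rangle\,\rho+K^\top\!\cdot\omega\bigr]\,d\Sigma_r, \]
where $\omega:=\rho\,\nabla\tau-\nabla[\sinh^{-1}(\rho\Delta\tau/(|H_0||H|))]-\alpha_{H_0}+\alpha_H$, and to expand the integrand in powers of $r^{-1}$. Since $\rho=\rho^{(-2)}/r^2+O(r^{-3})$, $\omega_a=\omega_a^{(-1)}/r+O(r^{-2})$ with $\omega_a^{(-1)}=\rho^{(-2)}\tilde\nabla_a\tau^{(1)}-\tilde\nabla_a[\rho^{(-2)}\tilde\Delta\tau^{(1)}/4]-(\alpha_{H_0}^{(-1)})_a+(\alpha_H^{(-1)})_a$, and $d\Sigma_r=r^2\,d\tilde S+O(r)$, the only potentially divergent contributions to $E$ appear at order $r$. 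The strategy is to identify these $r$-coefficients as linear combinations of the two expressions in \eqref{condition_converge}, after which the remaining $O(1)$ pieces integrate over $S^2$ to give the finite limits $C^i$ and $J_i$.

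For the angular momentum with rotation Killing field $K=X^j\partial_{X^k}-X^k\partial_{X^j}$, direct substitution yields $\langle K,T_0\rangle=r(a_k\tilde X^j-a_j\tilde X^k)+O(1)$ and $K^{\top,a}=\tilde K_{jk}^a+O(r^{-1})$, where $\tilde K_{jk}$ is the corresponding rotation Killing vector on the round $S^2$. The $r$-coefficient of $\int\langle K,T_0\rangle\rho\,d\Sigma_r$ is then a linear combination of $\int\tilde X^j\rho^{(-2)}\,d\tilde S$ and $\int\tilde X^k\rho^{(-2)}\,d\tilde S$, which vanish by the first condition of \eqref{condition_converge}. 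For $\int K^\top\!\cdot\omega\,d\Sigma_r$, integration by parts on $S^2$ combined with $\tilde\nabla_a\tilde K_{jk}^a=0$ annihilates the pairings of $\tilde K_{jk}$ with both the exact gradient $\tilde\nabla[\rho^{(-2)}\tilde\Delta\tau^{(1)}/4]$ and the reference connection $\alpha_{H_0}^{(-1)}=\tfrac{1}{2}\tilde\nabla[(\tilde\Delta+2)(X^0)^{(0)}]$, which is itself exact by \eqref{alpha_H_0}. The contribution $\int\rho^{(-2)}\tilde K_{jk}^a\tilde\nabla_a\tau^{(1)}\,d\tilde S$ reduces, via $\tau^{(1)}=-a_i\tilde X^i$ and $\tilde K_{jk}(\tilde X^i)=\delta^i_k\tilde X^j-\delta^i_j\tilde X^k$, to moments of $\rho^{(-2)}$ against the $\tilde X^i$ and again vanishes under the first condition. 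Finally, $\int\tilde K_{jk}^a(\alpha_H^{(-1)})_a\,d\tilde S$ transforms, via the identity $\tilde K_{jk}^a=\pm\tilde\epsilon^{ab}\tilde\nabla_b\tilde X^i$ (where $i$ is the third index) and one integration by parts, into $\mp\int\tilde X^i\,\tilde\epsilon^{ab}\tilde\nabla_b(\alpha_H^{(-1)})_a\,d\tilde S$, which is killed by the second condition.

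For the center of mass with boost $K=X^i\partial_{X^0}+X^0\partial_{X^i}$, Lorentz invariance of the inner product gives $\langle K,T_0\rangle=-X^i=-r\tilde X^i+O(1)$, so the $r$-divergent part of $\int\langle K,T_0\rangle\rho\,d\Sigma_r$ is a multiple of $\int\tilde X^i\rho^{(-2)}\,d\tilde S$, vanishing by the first condition. The tangential projection $K^\top_a=-X^i\partial_aX^0+X^0\partial_a X^i=r[(X^0)^{(0)}\tilde\nabla_a\tilde X^i-\tilde X^i\tilde\nabla_a(X^0)^{(0)}]+O(1)$ is only $O(r)$ as a one-form, so the vector $K^{\top,a}$ is $O(r^{-1})$ and $\int K^\top\!\cdot\omega\,d\Sigma_r$ is automatically $O(1)$, requiring no further cancellation. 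The main technical obstacle lies in the angular momentum computation, where the exactness of $\alpha_{H_0}^{(-1)}$ from \eqref{alpha_H_0} is indispensable: without it the reference side would generate an extra obstruction beyond the two listed in \eqref{condition_converge}. Once the leading coefficients vanish, the subleading $O(1)$ terms depend smoothly on the finite-order asymptotic data on $S^2$ and integrate to produce the claimed finite limits.
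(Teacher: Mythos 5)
Your proposal is correct and follows essentially the same route as the paper: expand the conserved quantity in powers of $r$, isolate the potentially divergent order-$r$ coefficient, discard the exact pieces (the $\sinh^{-1}$ gradient and $\alpha_{H_0}^{(-1)}$ via \eqref{alpha_H_0}) against the divergence-free rotation fields on $S^2$, and reduce what remains to the two moment conditions in \eqref{condition_converge}. The only differences are cosmetic: the paper works with a general antisymmetric $K_{\alpha\gamma}$ and observes that the $\rho^{(-2)}$-terms of the form $\tilde X^i a^j-\tilde X^j a^i$ cancel identically between the $\langle K,T_0\rangle\rho$ term and the $\rho^{(-2)}\tilde\nabla\tau^{(1)}$ term (so the rotation part needs only the second condition), whereas you kill them separately using the first condition, and your formula $\langle K,T_0\rangle=-X^i$ for the boost is literally valid only when $T_0=\frac{\partial}{\partial X^0}$ (in general one gets $-r$ times a fixed constant linear combination of the $\tilde X^j$ plus $O(1)$), neither of which affects the argument under the stated hypotheses.
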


\begin{proof}
We exam the finiteness of total conserved quantities with respect to a Killing field of the form $K_{\alpha\gamma} \eta^{\gamma\beta} X^{\alpha}\frac{\partial}{\partial X^\beta}$ with $K_{\alpha\gamma}+K_{\gamma\alpha}=0$. 

In \eqref{qlcq2},  $\rho$ is of the order of $r^{-2}$, while $\langle K, T_0\rangle$ could be of the order of $r$. Thus the top order term $-\int_\Sigma \langle K, T_0\rangle \rho d\Sigma$
needs to vanish in order to have a well-defined limit. 

Likewise, \[\begin{split}&\int_\Sigma K^\top\cdot \left(  -\rho {\nabla \tau }+ \nabla[ \sinh^{-1} (\frac{\rho \Delta \tau }{|H||H_0|})]-\alpha_H  + \alpha_{H_0} \right)  d \Sigma \\
&=
\int_\Sigma \langle K, \frac{\partial X}{\partial u^a} \rangle \sigma^{ab} j_b d\Sigma\end{split}\] where
 $j_b= \left(  -\rho{\nabla \tau }+ \nabla[ \sinh^{-1} (\frac{\rho\Delta \tau }{|H||H_0|})]-\alpha_H + \alpha_{H_0} \right)_b$ is of the order of $r^{-1}$ and $\langle K, \frac{\partial X}{\partial u^a} \rangle$ is of the order of $r^2$, thus the top order term needs to vanish as well.

Therefore, the $O({r})$ term of the conserved quantity is 
\begin{equation} \begin{split}&(E(\Sigma, X, T_0, K))^{(1)}=\frac{1}{8\pi} \int_{S^2}
-\langle K, T_0\rangle^{(1)} \rho^{(-2)} +\langle K, \frac{\partial X}{\partial u^a} \rangle^{(2)} \widetilde{\sigma}^{ab}j_b^{(-1)}  dS^2.\end{split}\end{equation}

where \begin{equation}\begin{split}j_b^{(-1)}=
 -\rho^{(-2)}\widetilde{\nabla}_b\tau^{(1)}+\frac{1}{4} \widetilde{\nabla}_b (\rho^{(-2)}\widetilde{\Delta} \tau^{(1)})-(\alpha_H)_b^{(-1)}+\frac{1}{2}\widetilde{\nabla}_b(\widetilde{\Delta}+2)(X^0)^{(0)}
\end{split}\end{equation}  satisfies the leading term of the optimal isometric embedding equation $\widetilde{\nabla}^b j_b^{(-1)}=0$.

In terms of coordinates, the leading term is $8\pi$ times 
\begin{equation}\label{leading}\int_{S^2} \left(-K_{\alpha\gamma} \tilde{X}^\alpha (T_0^\gamma)^{(0)} \rho^{(-2)}\right) dS^2+\int_{S^2}  \left(K_{\alpha\gamma}\tilde{X}^\alpha (\tilde \nabla_a  \tilde{X}^\gamma )\widetilde{\sigma}^{ab} j^{(-1)}_b\right) dS^2\end{equation} where we take $\tilde{X}^{0}=0$. 

Plugging $ (T_0^\gamma)^{(0)}=a^\gamma$ and expanding indexes, the first summand in \eqref{leading} is 
\[ -\int_{S^2} K_{\alpha\gamma} \tilde{X}^\alpha (T_0^\gamma)^{(0)} \rho^{(-2)} dS^2 =- \int_{S^2} \left(\sum_i(K_{i0} \tilde{X}^i a^0) + \sum_{i<j} K_{ij} (\tilde{X}^i a^j-\tilde{X}^j a^i)\right)\rho^{(-2)}dS^2. \]

We simplify the second summand in \eqref{leading} in the following. 
Using $\widetilde{\nabla}^b j_b^{(-1)}=0$ and integrating by parts, the second summand can be written as 
\[\frac{1}{2}\int_{S^2}K_{\alpha\gamma}[\tilde{X}^\alpha (\tilde \nabla_a  \tilde{X}^\gamma)-\tilde{X}^\gamma (\tilde \nabla_a  \tilde{X}^\alpha )] \widetilde{\sigma}^{ab} j_b^{(-1)} dS^2 .\]

We check that \[\tilde \nabla^a[\tilde{X}^\alpha (\tilde \nabla_a  \tilde{X}^\gamma)-\tilde{X}^\gamma (\tilde \nabla_a  \tilde{X}^\alpha ) ] =0.\] Therefore, we can throw away the two gradient terms in $j^{(-1)}_b$. On the other hand, we compute 
\[\frac{1}{2} K_{\alpha\gamma}[\tilde{X}^\alpha (\tilde \nabla_a  \tilde{X}^\gamma)-\tilde{X}^\gamma (\tilde \nabla_a  \tilde{X}^\alpha )  ]=\sum_{i<j} K_{ij}[\tilde{X}^i (\tilde \nabla_a  \tilde{X}^j)-\tilde{X}^j (\tilde \nabla_a  \tilde{X}^i )].\]

Therefore
\[
\begin{split}
&\int_{S^2}\left(K_{\alpha\gamma}\tilde{X}^\alpha (\tilde \nabla_a \tilde{X}^\gamma )\widetilde{\sigma}^{ab} j^{(-1)}_b\right) dS^2\\
=&\int_{S^2} \sum_{i<j} K_{ij} [\tilde{X}^i  (\tilde \nabla_a \tilde{X}^j)-\tilde{X}^j(\tilde \nabla_a \tilde{X}^i)]\widetilde{\sigma}^{ab}(-\rho^{(-2)}\widetilde{\nabla}_b\tau^{(1)}-(\alpha_H)_b^{(-1)}) dS^2.
\end{split}
\]

We compute 

\[(\tilde{X}^i  \tilde \nabla_a \tilde{X}^j-\tilde{X}^j \tilde \nabla_a  \tilde{X}^i)\widetilde{\sigma}^{ab}\tilde \nabla_b \tilde{X}^k=\tilde{X}^i \delta^{jk}-\tilde{X}^j \delta^{ik}\] and
\[\tilde{X}^i  (\tilde \nabla_a \tilde{X}^j)-\tilde{X}^j(\tilde \nabla_a \tilde{X}^i)= \epsilon_a^{\,\,b} \epsilon^{ij}_{\,\,\,\,k}\tilde \nabla_b \tilde{X}^k.
\] 

As a result, 
\[
\begin{split}
   &\int_{S^2}\left(K_{\alpha\gamma}\tilde{X}^\alpha (\tilde \nabla_a \tilde{X}^\gamma )\widetilde{\sigma}^{ab} j^{(-1)}_b\right) dS^2 \\
= & \int_{S^2} \left(\sum_{i<j} K_{ij}(\tilde{X}^i a^{j}-\tilde{X}^j a^{i})  \rho^{(-2)}- \sum_{i<j} K_{ij} [\tilde{X}^i  (\tilde \nabla_a \tilde{X}^j)-\tilde{X}^j(\tilde \nabla_a \tilde{X}^i)]\widetilde{\sigma}^{ab}(\alpha_H)_b^{(-1)} \right)dS^2. \\
= & \int_{S^2} \left(\sum_{i<j} K_{ij}(\tilde{X}^i a^{j}-\tilde{X}^j a^{i})  \rho^{(-2)} + \epsilon^{ij}_{\,\,\,\,k} K_{ij} \tilde X^k \tilde{\epsilon}^{ab}\tilde{\nabla}_b(\alpha_H^{(-1)})_a \right) dS^2.
\end{split}
\]
The condition \eqref{condition_converge} implies both summands in \eqref{leading} vanish.

\end{proof}

In the rest of the section, we prove the finiteness of total conserved quantities under the vacuum constraint equation. 
Recall that an initial data set $(M,g,k)$ satisfies the vacuum constraint equation if 

\begin{equation}\label{constraint}
\begin{split}
R(g) + (tr k)^2-|k|^2 &=0\\
\nabla_g^i k_{ij} - \partial_j tr_g k &=0
\end{split}
\end{equation}
where $R(g)$ is the scalar curvature of $g$ and $\nabla_g$ is the covariant derivative with respect to $g$ .

We first need a lemma regarding initial data sets satisfying the vacuum constraint equation. 

\begin{lemma}\label{physical_finite}
Let $(M,g,k)$ be a vacuum asymptotically flat initial data of order 1. Consider the coordinate spheres $\Sigma_r$ and let $\hat h$ be the mean curvature of the coordinate spheres in $M$. Assume that 
\[ \hat h = \frac{2}{r} + \frac{\hat h^{(-2)}}{r^2} +O(r^{-3})  \]
then 
\[ \int_{S^2} \tilde X^i \hat h^{(-2)} dS^2 =0.   \]
\end{lemma}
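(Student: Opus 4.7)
\medskip

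The plan is to reduce the lemma to a direct calculation at spatial infinity, combining an asymptotic expansion of $\hat h$ with the vacuum Hamiltonian constraint.

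First, I regard the leading metric perturbation $a_{ij} := g^{(-1)}_{ij}$ as a smooth symmetric Cartesian tensor on $S^2$, extended to $\mathbb{R}^3 \setminus \{0\}$ as a degree-zero function. Using $\hat h = \frac{1}{\sqrt{\det g}}\partial_i(\sqrt{\det g}\,\nu^i)$ with $\nu = \nabla \rho / |\nabla \rho|_g$ for $\rho = |x|$, an order-by-order expansion yields
\[
\hat h^{(-2)} = \tfrac{5}{2}\xi - \tfrac{3}{2}\tau - B_0,
\]
where $\xi := a^{ij}\tilde X_i \tilde X_j$, $\tau := \delta^{ij}a_{ij}$, and $B_0 := r(\partial_i a^{ij})\tilde X_j$ is the Cartesian tangential divergence of $a^{ij}$ contracted with $\tilde X_j$. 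As a sanity check, Schwarzschild $a_{ij} = 2M\delta_{ij}$ gives $\xi = 2M$, $\tau = 6M$, $B_0 = 0$ and $\hat h^{(-2)} = -4M$, matching the conformal transformation formula.

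Second, I invoke the vacuum Hamiltonian constraint. Since $k = O(r^{-2})$, $R(g) = |k|^2 - (\mathrm{tr}\, k)^2 = O(r^{-4})$, so the coefficient of $r^{-3}$ in the linearized scalar curvature $R_L[h] = \partial_i \partial_j h^{ij} - \Delta\,\mathrm{tr}(h)$ must vanish. A parallel expansion for $h_{ij} = a_{ij}/r$ yields
\[
R^{(-3)} = \mathcal D_j \mathcal D_i a^{ij} - 3 B_0 - \tau + 3\xi - \tilde\Delta \tau = 0,
\]
where $\mathcal D_j$ denotes the tangential Cartesian derivative on $S^2$ (the restriction of $\partial_j$ to $S^2$ acting on the degree-zero extension).

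Third, I multiply the Step~1 formula by $\tilde X^k$, integrate over $S^2$, and use the constraint from Step~2. Combining $\tilde\Delta \tilde X^k = -2\tilde X^k$ with the identity $\int_{S^2}\tilde X^k R^{(-3)}\,dS^2 = 0$, and performing the integration by parts that transfers the second-order operator $\mathcal D_j \mathcal D_i$ onto the $\tilde X^k$ factor (invoking $\mathcal D_i \tilde X^k = \delta^{ik} - \tilde X^i\tilde X^k$ and Stokes' theorem on the closed surface $S^2$), the contributions reorganize algebraically to give $\int_{S^2}\tilde X^k \hat h^{(-2)}\,dS^2 = 0$.

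The principal obstacle is the integration by parts in Step~3 involving the non-intrinsic Cartesian operator $\mathcal D_j$ applied to $a^{ij}$. Since $a^{ij}$ need not be tangent to $S^2$ in either index, one must decompose it into its tangential, mixed, and normal parts with respect to the sphere (or equivalently track projection factors $\delta^i_j - \tilde X^i\tilde X_j$) to correctly account for the curvature-induced terms produced when transposing $\mathcal D_j$. Once this bookkeeping is carried out consistently with the $R^{(-3)} = 0$ relation, the cancellation is purely algebraic.
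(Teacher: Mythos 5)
Your proposal is correct, but it takes a genuinely different route from the paper's. The paper never linearizes in Cartesian coordinates: it combines the radial second variation formula $\partial_r\hat h=-f(Rc(\nu,\nu)+|A|^2)-\Delta f$ with the Gauss equation to obtain $\hat h^{(-2)}=K^{(-3)}-\widetilde\Delta f^{(-1)}-2f^{(-1)}$, kills the lapse terms because $\tilde X^i$ lies in the kernel of $\widetilde\Delta+2$, and then shows $\int_{S^2}\tilde X^iK^{(-3)}dS^2=0$ from the intrinsic expansion of the Gauss curvature of $\Sigma_r$ in terms of $\sigma^{(1)}_{ab}$ (using $\tilde\nabla_a\tilde\nabla_b\tilde X^i=-\tilde X^i\tilde\sigma_{ab}$ and $\widetilde\Delta\tilde X^i=-2\tilde X^i$); the constraint enters only through $R(g)=O(r^{-4})$. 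You instead expand $\hat h$ and the scalar curvature directly in the $3$-dimensional perturbation $a_{ij}$ and use the $r^{-3}$ coefficient of the linearized Hamiltonian constraint as a pointwise identity on $S^2$. I checked your two formulas: with the convention $\mathcal D_i=r\,\partial_i$ acting on degree-zero extensions, both $\hat h^{(-2)}=\tfrac52\xi-\tfrac32\tau-B_0$ and $R^{(-3)}=\mathcal D_j\mathcal D_i a^{ij}-3B_0-\tau+3\xi-\widetilde\Delta\tau$ are correct, and the closing cancellation you only assert does go through: from $\int_{S^2}\phi\,\mathcal D_i v\,dS^2=\int_{S^2}v\,(2\phi\tilde X_i-\mathcal D_i\phi)\,dS^2$ one gets $\int\tilde X^kB_0=4\int\tilde X^k\xi-\int\tilde X^k\tau-\int a^{kj}\tilde X_j$ and $\int\tilde X^k\mathcal D_j\mathcal D_ia^{ij}=3\int\tilde X^kB_0-2\int a^{kj}\tilde X_j$, so integrating $R^{(-3)}=0$ against $\tilde X^k$ yields $2\int a^{kj}\tilde X_j=\int\tilde X^k\tau+3\int\tilde X^k\xi$, and substituting gives $\int\tilde X^k\hat h^{(-2)}=-\tfrac32\int\tilde X^k\xi-\tfrac12\int\tilde X^k\tau+\int a^{kj}\tilde X_j=0$. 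What each route buys: the paper's argument is coordinate-light, reduces the statement to the absence of $\ell=1$ modes in the linearized Gauss curvature plus the kernel of $\widetilde\Delta+2$, and reuses the same computation later (cf.\ Lemma \ref{slow}); your argument is self-contained and elementary, makes explicit that only $R(g)=O(r^{-4})$ (not the full vacuum system) is used, but requires the careful Cartesian-versus-intrinsic bookkeeping on $S^2$ that you correctly identified as the delicate point.
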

\begin{proof}
By the second variation formula of the area of the coordinate spheres, we have
\begin{equation} \label{second_variation_area}
\partial_r \hat h = -f(Rc(\nu,\nu) +|A|^2) - \Delta f \end{equation}
where $Rc$ is the Ricci curvature of the induced metric on $M$, $\nu$ is the outward unit normal of $\Sigma_r$, $f$ is the lapse function of the coordinate spheres and $A$ is the second fundamental form of the coordinate spheres in $M$. The Gauss equation of the surfaces $\Sigma_r$ in M is
\begin{equation} \label{gauss}
 K= \frac{R(g)}{2}- Rc(\nu,\nu)  + \frac{1}{2}(\hat h^2-|A|^2).  \end{equation}
Combining equation \eqref{second_variation_area} and \eqref{gauss}, we have 
\begin{equation}\label{variation_mean_curvature} 
 \partial_r \hat h = f( K  -  \frac{R(g)}{2}  - \frac{1}{2}(\hat h^2-|A|^2)  -|A|^2) - \Delta f .
\end{equation}
We have the following aymptotic expansion for $K$, $f$, $\hat h$:
\[
\begin{split}
K & = \frac{1}{r^2} + \frac{K^{(-3)}}{r^3} +O(r^{-4}),  \\
\hat h & = \frac{2}{r} + \frac{{\hat h}^{(-2)}}{r^2} +O(r^{-3}),\\
f & =  1+\frac{f^{(-1)}}{r}.+ O(r^{-2})
\end{split}
\]
In addition, the second fundamental form satisfies
\begin{equation}
|A|^2 = \frac{\hat h^2}{2} + O(r^{-4}).
\end{equation}
and $R(g) = O(r^{-4})$ by the vacuum constraint equation. Hence, from equation (\ref{variation_mean_curvature}), it follows that 
\begin{equation}   \label{variation_mean_curvature_2}
\partial_r \hat h = f(K - \frac{3 \hat h^2}{4}) - \Delta f + O(r^{-4})
\end{equation}
and
\[  \hat h^{(-2)} = K^{(-3)}  - \widetilde \Delta f^{(-1)} -2f^{(-1)}.  \]
Integration by parts, we derive
\[  \int_{S^2} \tilde X^i {\hat h}^{(-2)} dS^2 = \int_{S^2} \tilde X^i( K^{(-3)}  - \widetilde \Delta f^{(-1)} -2f^{(-1)}) dS^2 = \int_{S^2} \tilde X^i K^{(-3)} dS^2.  \]
To compute the Gauss curvature $K$ of the coordinate sphere, recall that if $\gamma^c_{ab}$ are the Christoffel symbols of the induced metric, then
\[2K= \sigma^{bd}(\partial_a \gamma^a_{bd} - \partial_d \gamma^a_{ba} + \gamma^a_{af}\gamma^f_{db} - \gamma^a_{df} \gamma^f_{ab}).  \]
Moreover,  we have the following asymptotics for the Christoffel symbols  
 \[ \gamma_{ab}^c = \tilde \gamma^c_{ab} + \frac{\tilde \sigma^{cd}}{2r}(\tilde \nabla_b  \sigma_{ad}^{(1)} +\tilde \nabla_a  \sigma_{bd}^{(1)} -\tilde \nabla_d  \sigma_{ab}^{(1)}  ) +O(r^{-2}), \] where $\tilde \gamma^c_{ab}$ are the Christoffel symbols of $\tilde{\sigma}_{ab}$.
$2K$ has the following expansion: 
\[ 2K = \frac{2}{r^2} + \frac{1}{r^3} \left( -\tilde \sigma^{ab}\sigma^{(1)}_{ab}+ \tilde \sigma^{bd}[\tilde \nabla_a (\gamma^{(-1)})_{bd}^a - \tilde \nabla_d (\gamma^{(-1)})_{ba}^a) ]\right) + O(r^{-4}) \]
where 
\[  (\gamma^{(-1)})_{ab}^c = \frac{1}{2} \tilde{\sigma}^{cd} (\tilde \nabla_b  \sigma_{ad}^{(1)} +\tilde \nabla_a  \sigma_{bd}^{(1)} -\tilde \nabla_d  \sigma_{ab}^{(1)}  ).\]
As a result,
\[
\begin{split} 
  & K^{(-3)} \\ 
=& -\tilde \sigma^{ab}\sigma^{(1)}_{ab} + \frac{\tilde \sigma^{ae}\tilde \sigma^{bd}}{2} (\tilde \nabla_a \tilde \nabla_b \sigma^{(1)}_{de}+\tilde \nabla_a \tilde \nabla_d \sigma^{(1)}_{be}-\tilde \nabla_a \tilde \nabla_e \sigma^{(1)}_{bd}-\tilde \nabla_d \tilde \nabla_b \sigma^{(1)}_{ae}-\tilde \nabla_d \tilde \nabla_a \sigma^{(1)}_{be}+\tilde \nabla_d \tilde \nabla_e \sigma^{(1)}_{ab})\\
  =& -\tilde \sigma^{ab}\sigma^{(1)}_{ab} + \tilde \nabla^a \tilde \nabla ^b \sigma^{(1)}_{ab}  - \widetilde \Delta ( \tilde \sigma^{ab}\sigma^{(1)}_{ab}).
\end{split}
\]
Integration by parts, we obtain
\[  \int_{S^2} \tilde X^i K^{(-3)}dS^2 = \int_{S^2} \tilde X^i [ -\tilde \sigma^{ab}\sigma^{(1)}_{ab} + \tilde \nabla^a \tilde \nabla ^b \sigma^{(1)}_{ab}  - \widetilde \Delta ( \tilde \sigma^{ab}\sigma^{(1)}_{ab}) ] dS^2 =0. \]
\end{proof}

We also need the following result regarding isometric embeddings into $\R^3$. 

\begin{lemma} \label{reference_finite}
Suppose $\Sigma_r$ is a family of surfaces with induced metric $\sigma = r^2\tilde \sigma + r \sigma^{(1)} +O(1)$ and let 
\[ \hat h_0 = \frac{2}{r} + \frac{\hat h_0^{(-2)}}{r^2} +O(r^{-3}) \]
be the mean curvature of the isometric embedding of $\Sigma_r$ into $\R^3$, then we have
\[ \int_{S^2} \tilde X^i \hat h_0^{(-2)} dS^2 =0. \]
\end{lemma}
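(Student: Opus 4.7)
My plan is to mimic the proof of Lemma \ref{physical_finite}, applied now to the family of image surfaces $\Sigma_r' = X_r(\Sigma_r)$ inside the ambient flat space $(\R^3,\delta)$. The ingredients needed for that argument---a smooth foliation with a lapse function, the second variation formula for area, the Gauss equation relating intrinsic curvature to ambient curvature and the second fundamental form, and asymptotic vanishing of the ambient Ricci and scalar curvatures---all carry over, with $\R^3$ playing the role of a trivial vacuum background.

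First I would fix a smooth one-parameter family of isometric embeddings $X_r: \Sigma_r \to \R^3$ by imposing a normalization that removes the rigid-motion ambiguity; since the induced metric is $r^2 \tilde\sigma + r\sigma^{(1)} + O(1)$, standard perturbation theory (as in \cite{Chen-Wang-Yau1}) produces such a family whose image is a small perturbation of the round sphere of radius $r$, and the associated lapse $f_0$ admits an expansion $f_0 = 1 + f_0^{(-1)}/r + O(r^{-2})$. Because the Gauss curvature $K$ of $\Sigma_r$ is intrinsic, its expansion $K = r^{-2} + K^{(-3)}/r^3 + O(r^{-4})$ is exactly the one already computed in the proof of Lemma \ref{physical_finite}, and in particular
\[
\int_{S^2} \tilde X^i\, K^{(-3)}\, dS^2 = 0.
\]

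Next I would invoke the Gauss equation in $\R^3$, which reads $K = \det A_0$ (with $A_0$ the second fundamental form of the embedding), together with $\hat h_0 = \mathrm{tr}\,A_0$. This gives $|A_0|^2 = \hat h_0^2 - 2K = \tfrac{1}{2}\hat h_0^2 + O(r^{-4})$, exactly matching the analogous step in the previous lemma. Since $\R^3$ is Ricci-flat, the second variation of area along the foliation simplifies to
\[
\partial_r \hat h_0 = f_0\bigl(K - \tfrac{3}{4}\hat h_0^2\bigr) - \Delta f_0 + O(r^{-4}),
\]
and comparing coefficients at order $r^{-2}$ yields $\hat h_0^{(-2)} = K^{(-3)} - \widetilde\Delta f_0^{(-1)} - 2 f_0^{(-1)}$. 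Integrating against $\tilde X^i$ and using $(\widetilde\Delta + 2)\tilde X^i = 0$ kills both $f_0^{(-1)}$ contributions, leaving
\[
\int_{S^2}\tilde X^i \hat h_0^{(-2)}\, dS^2 = \int_{S^2}\tilde X^i K^{(-3)}\, dS^2 = 0.
\]

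The only genuinely non-routine step is the first: producing the smooth family of embeddings so that $f_0$ is well-defined and has the required expansion. This is a perturbed Weyl problem around the round sphere; once a gauge is fixed (for instance by prescribing the image center of mass and orientation), smoothness in $r$ together with the expansion of $f_0$ follows from elliptic regularity for the linearized isometric embedding system analyzed in \cite{Chen-Wang-Yau1}. Everything else is a direct transcription of the computation in Lemma \ref{physical_finite} to the trivial Ricci-flat background.
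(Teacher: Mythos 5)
Your proposal is essentially correct, but it takes a genuinely different route from the paper's. The paper proves the lemma statically, at a single value of $r$: it writes $\hat h_0^{(-2)}=-\sum_i\tilde X^i\widetilde\Delta Y^i-\tilde\sigma^{ab}\sigma^{(1)}_{ab}$ with $Y^i$ solving the linearized isometric embedding equation, and evaluates the relevant integral via Nirenberg's ansatz (the unknowns $P_a$, $F$ and the compatibility system), so it never differentiates in $r$ and never needs a family of embeddings. Your proof instead differentiates in $r$, and therefore needs a gauge-fixed family $X_r$ that is differentiable in $r$ with remainders that can themselves be differentiated: you must know $\partial_r\hat h_0=-\frac{2}{r^2}-\frac{2\hat h_0^{(-2)}}{r^3}+O(r^{-4})$ (not merely the expansion of $\hat h_0$), that $\Delta f_0=\frac{\widetilde\Delta f_0^{(-1)}}{r^3}+O(r^{-4})$, and that the tangential part of $\partial_r X_r$ is $o(1)$ so that the reparametrization term $V^\top\cdot\nabla\hat h_0$, which you omitted from the variation formula, sits below the $r^{-3}$ threshold. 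You correctly flag this as the non-routine step; it is precisely the input the paper's static argument avoids, and under the lemma's literal hypothesis ($O(1)$ remainder with no $r$-derivative control) it is an additional assumption, though it is available in the paper's actual application through the expansions of \cite{Chen-Wang-Yau1}. Granting it, your coefficient matching (which happens at order $r^{-3}$, not $r^{-2}$ as written) gives $\hat h_0^{(-2)}=K^{(-3)}-\widetilde\Delta f_0^{(-1)}-2f_0^{(-1)}$, the lapse terms are killed by $(\widetilde\Delta+2)\tilde X^i=0$, and the purely intrinsic identity $\int_{S^2}\tilde X^iK^{(-3)}dS^2=0$ established in Lemma \ref{physical_finite} finishes the proof; so the conclusion is sound. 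What each approach buys: the paper's proof uses only the fixed-$r$ linearized Weyl problem, while yours recycles the evolution argument of Lemma \ref{physical_finite} at the cost of constructing and differentiating the family of embeddings.

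One simplification worth noting: in flat ambient space your dynamic step is unnecessary. The Gauss equation gives $K=\det A_0=\frac14\hat h_0^2-\frac14(\lambda_1-\lambda_2)^2$, and since the embedded surface is $C^2$-close to the round sphere ($A_0=r\tilde\sigma+O(1)$ in components), the umbilicity defect contributes only $O(r^{-4})$; hence $K^{(-3)}=\hat h_0^{(-2)}$ pointwise on each fixed $\Sigma_r$, and the lemma follows at once from $\int_{S^2}\tilde X^iK^{(-3)}dS^2=0$, with no lapse, no foliation, and only the same $C^2$ control of a single embedding that your construction requires anyway. This is consistent with your identity, since it forces the $(\widetilde\Delta+2)f_0^{(-1)}$-type terms to vanish after integration.
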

\begin{proof}

We derive that
\[  \hat{h}_0^{(-2)}=-\sum_i \tilde{X}^i\widetilde{\Delta}Y^i-\tilde{\sigma}^{ab}\sigma_{ab}^{(1)},\]  
where $Y^i, i=1, 2, 3$ are functions on $S^2$ that solve  the linearized isometric embedding equation: 
\begin{equation} \sum_{i=1}^3  \tilde \nabla _a \tilde{X}^i  \tilde \nabla_b Y^i + \tilde \nabla _b \tilde{X}^i  \tilde \nabla_a Y^i =\sigma_{ab}^{(1)}.\end{equation} 
Hence,
\[
 \int_{S^2} \tilde X^i \hat h_0^{(-2)} dS^2 
=  -  \int_{S^2} (\tilde X^i \tilde{\sigma}^{ab}\sigma_{ab}^{(1)} + \tilde X^i \sum_j\tilde{X}^j\widetilde{\Delta}Y^j ) dS^2.
\]
To compute this, we use the ansatz in Nirenberg's  paper \cite{n} for solving the isometric embedding of surface with positive Gauss curvature into $\R^3$. 
Let
\begin{equation} \tilde \nabla_a  Y^j= P_a \tilde X + (\frac{\sigma^{(1)}_{ab}}{2} + F \epsilon_{ab}) \tilde \sigma^{bc} \tilde X_c  \end{equation}
where $ \epsilon_{ab}$ is the area form.  The one-form $P_a$ and the function $F$ are the new unknowns. 
Instead of the original isometric embedding equation, one looks at the compatibility condition as the new set of equations. 
 
We derive
\begin{equation} \label{Nirenberg_equ}
 \tilde \nabla_d \tilde \nabla_a Y = (\tilde \nabla_d P_a) X  + P_a \tilde X_d +  (\frac{\tilde \nabla _d\sigma^{(1)}_{ab}}{2} + F_d \epsilon_{ab}) \tilde \sigma^{bc} \tilde X_c -  (\frac{\sigma^{(1)}_{ad}}{2} + F \epsilon_{ad}) \tilde X .
\end{equation}
Taking the trace of equation (\ref{Nirenberg_equ}), we have 
\[\sum_i   \tilde X^ i \tilde \Delta Y^i  = \tilde \nabla^a P_a - \frac{1}{2} \tilde \sigma^{ab}\sigma^{(1)}_{ab}.\]

From \eqref{dth_0} and the above equation, we obtain

\begin{equation}\label{integral_dth_0}\int \tilde{X}^i h_0^{(-2)}dS^2 =\frac{-1}{2}\int \tilde{X}^i\tilde{\sigma}^{ab}\sigma^{(1)}_{ab}dS^2 -\int \tilde{X}^i\tilde{\nabla}^a P_a dS^2.\end{equation}
To compute $\tilde \nabla^a P_a $, we use the compatibility condition, $\epsilon^{ad}\tilde \nabla_d \tilde \nabla_a Y =0 $, to obtain the equation for $P_a$ and $F$. Again, using equation (\ref{Nirenberg_equ}), we can express the compatibility condition into the following equations:
\[
\begin{split}
\epsilon^{ad}( \tilde \nabla_d P_a  - \frac{\sigma^{(1)}_{ad}}{2} - F \epsilon_{ad})& =0,\\
\epsilon^{ad}(P_a \tilde \sigma_{cd} +\frac{1}{2}\tilde \nabla _d\sigma^{(1)}_{ac} +\tilde \nabla _d  F \epsilon_{ac}) & =0.
\end{split}
\]
As shown in \cite{n}, this is an elliptic system for $P_a$ and $F$. Indeed, one may express $P_a$ in terms of $F$ from the second equation then replace them in the first one to obtain an second order elliptic equation of $F$.  We can solve $ P_a$ from the second equation in terms of $F$ and $T_{ab}$. The equation can be written as 
\begin{equation} \label{equ_P}
\epsilon_{ac}P_b \tilde \sigma^{ab} +\frac{1}{2} \epsilon^{ad} \tilde\nabla _d\sigma^{(1)}_{ac}+ \tilde \nabla _c F   =0.
\end{equation}
Multiplying  the equation by $\epsilon^{ce}$, we have
\[ P^e = \frac{1}{2} \epsilon^{ce} \epsilon ^{ad} \tilde\nabla _d\sigma^{(1)}_{ac} + \epsilon^{ce} \tilde \nabla _c F.    \]
Integration by parts and simplifying yield
\[
\begin{split}
\int_{S^2} \tilde X^i \tilde \nabla ^e P_e  dS^2 &= \int_{S^2} \tilde X_i \tilde \nabla _e(\frac{1}{2} \epsilon^{ce} \epsilon ^{ad} \tilde\nabla _d\sigma^{(1)}_{ac} + \epsilon^{ce} \tilde \nabla _c F) dS^2 \\
& =  \frac{1}{2}\int_{S^2}\tilde \nabla _e \tilde\nabla _d \tilde X^i  \epsilon^{ce} \epsilon ^{ad} \sigma^{(1)}_{ac}dS^2\\
& =  -\frac{1}{2} \int_{S^2} \tilde X^i \tilde \sigma_{de}  \epsilon^{ce} \epsilon ^{ad}\sigma^{(1)}_{ac}dS^2\\
& = - \frac{1}{2} \int_{S^2} \tilde X^i \tilde \sigma^{ab}\sigma^{(1)}_{ab} dS^2.
\end{split}
\]
Plugging this into \eqref{integral_dth_0}, we obtain 
\[  \int_{S^2}  \tilde X^i  h_{0}^{(-2)} dS^2 =0.\]
\end{proof}

\begin{theorem} \label{finite}
For a vacuum asymptotically flat initial data set of order $1$ (see Definition \ref{order_one}), 
the total angular momentum and total center of mass are finite.
\end{theorem}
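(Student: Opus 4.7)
The plan is to invoke Proposition \ref{condition_finite}, which reduces the theorem to verifying the two integral identities
\[
\int_{S^2} \tilde X^i \rho^{(-2)} \, dS^2 = 0
\quad \text{and} \quad
\int_{S^2} \tilde X^i \, \tilde\epsilon^{ab}\tilde\nabla_b (\alpha_H^{(-1)})_a \, dS^2 = 0
\]
under the vacuum constraint equation. I would organize the verification into two stages, one for each condition, with the existing Lemma \ref{physical_finite} and Lemma \ref{reference_finite} handling the first and a direct use of the momentum constraint handling the second.

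For the first condition, I would use the identity $\rho^{(-2)} = (h_0^{(-2)} - h^{(-2)})/a^0$ from \eqref{rho-2} to split the integral into a physical and a reference piece. On the physical side, the $3+1$ decomposition of the mean curvature vector of $\Sigma_r$ gives $|H|^2 = \hat h^2 - (tr_{\Sigma_r} k)^2$, where $\hat h$ denotes the mean curvature of $\Sigma_r$ inside $(M,g)$. Since $k = O(r^{-2})$ forces $(tr_{\Sigma_r} k)^2 = O(r^{-4})$, this correction is too small to affect the $r^{-2}$ coefficient, so $h^{(-2)} = \hat h^{(-2)}$ and Lemma \ref{physical_finite} immediately yields $\int_{S^2} \tilde X^i h^{(-2)}\, dS^2 = 0$. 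On the reference side, an analogous comparison between $|H_0|$ and the mean curvature $\hat h_0$ of the projected surface $\widehat \Sigma_r$ in $\R^3$ shows that $h_0^{(-2)}$ agrees with $\hat h_0^{(-2)}$ at the relevant order; the time component $X^0 = O(1)$ contributes only to higher-order corrections. Lemma \ref{reference_finite} then disposes of this piece.

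For the second condition, I would express $(\alpha_H^{(-1)})_a$ in terms of the initial data. At leading order, the mean-curvature gauge aligns with the natural frame $\{\nu, e_0\}$ of the $3+1$ split, so $\alpha_H$ reduces to the one-form $k(\nu, \cdot)|_{\Sigma_r}$ modulo subleading rotations in the normal bundle; extracting the $r^{-1}$ coefficient yields $(\alpha_H^{(-1)})_a$ in terms of $k^{(-2)}$. The vacuum momentum constraint $\nabla^i k_{ij} = \partial_j \, tr_g k$ then furnishes the crucial divergence identity. Restricting it to $\Sigma_r$, decomposing it into tangential and normal components, and extracting the $r^{-3}$ order produces an expression for the curl $\tilde\epsilon^{ab}\tilde\nabla_b (\alpha_H^{(-1)})_a$. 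Pairing with $\tilde X^i$ and integrating by parts on $S^2$, using $\widetilde\Delta \tilde X^i = -2 \tilde X^i$ and the symmetry $k_{ij} = k_{ji}$, I expect the integrand to reorganize into a total divergence, giving the desired vanishing.

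The main obstacle lies in the second step. It requires careful bookkeeping of the difference between the mean-curvature gauge on the normal bundle and the $3+1$-adapted frame at subleading order, and of the precise way the momentum constraint re-expresses a tangential curl of $k(\nu,\cdot)$ on coordinate spheres. It is essential that only the momentum constraint enters, not the Hamiltonian constraint, and that the argument proceeds without any hypothesis of an asymptotic Killing field. Once these identifications are in place, the spherical-harmonic property of $\tilde X^i$ collapses the boundary integral to zero, and both hypotheses of Proposition \ref{condition_finite} are verified, so $C^i$ and $J_i$ are finite.
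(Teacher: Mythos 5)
Your proposal is correct and follows essentially the same route as the paper: reduce via Proposition \ref{condition_finite}, handle $\int_{S^2}\tilde X^i\rho^{(-2)}dS^2=0$ by splitting $\rho^{(-2)}=(h_0^{(-2)}-h^{(-2)})/a^0$ and invoking Lemma \ref{reference_finite} (with $h_0^{(-2)}=\hat h_0^{(-2)}$) and Lemma \ref{physical_finite} (with $h^{(-2)}=\hat h^{(-2)}$ since $k=O(r^{-2})$), and handle the curl condition by noting $\alpha_H$ equals $-k(\nu,\cdot)$ up to a gradient, using the tangential component of the momentum constraint to write the leading coefficient as a tangential divergence ($\pi_{ar}^{(-1)}=\tilde\nabla^b\pi^{(0)}_{ab}$ in the paper's notation), and integrating by parts against $\tilde X^i$ so that the symmetry of $k$ kills the result. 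The only cosmetic differences are that the paper phrases the constraint argument in terms of the conjugate momentum $\pi$ (with the expansion proved in the appendix, where the relevant tangential constraint appears at order $r^{-2}$) and that the final integration by parts uses the full Hessian identity $\tilde\nabla_a\tilde\nabla_b\tilde X^i=-\tilde X^i\tilde\sigma_{ab}$ rather than just $\widetilde\Delta\tilde X^i=-2\tilde X^i$.
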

\begin{proof}
By Proposition \ref{condition_finite}, it suffices to show that 
\[
 \int_{S^2} \tilde X^i \rho^{(-2)} dS^2 =0\text{ and } \int_{S^2}  \tilde X^i \left( \tilde{\epsilon}^{ab}\tilde{\nabla}_b(\alpha_H^{(-1)})_a \right) dS^2=0.
\]
Recall that 
\[  \rho^{(-2)} = \frac{h_0^{(-2)}-h^{(-2)}}{a^0}.    \]
By Lemma 4 of \cite{Chen-Wang-Yau1}, 
\[ \hat h_0^{(-2)} = h_0^{(-2)}.  \]
By Lemma \ref{reference_finite}, 
\[  \int_{S^2} \tilde X^i h_0^{(-2)} d S^2= \int_{S^2} \tilde X^i \hat h_0^{(-2)}  d S^2 = 0.\]

For $\int_{S^2}\tilde X^i  h^{(-2)}$, we have
\[ h^{(-2)} = \hat h^{(-2)}  \]
since $k=O(r^{-2})$.
By Lemma \ref{physical_finite},
\[  \int_{S^2} \tilde X^i  h^{(-2)} d S^2= \int_{S^2} \tilde X^i \hat h^{(-2)} dS^2 =0. \]
To prove  $  \int_{S^2}  \tilde X^i \left( \tilde{\epsilon}^{ab}\tilde{\nabla}_b(\alpha_H^{(-1)})_a \right) dS^2=0, $
it suffices to show that 
\[   \int_{S^2}  \tilde X^i \left( \tilde{\epsilon}^{ab}\tilde{\nabla}_b(\pi_{ar }^{(-1)}) \right) dS^2=0\]
since $(\alpha_H^{(-1)})_a$ and $-\pi_{ar }^{(-1)}$ differs only by a gradient vector field. 

By the vacuum constraint equation, $\nabla_g^i \pi_{a i} = 0.$ In the appendix, we prove that 
\begin{equation}\label{eq:constraint1}
 \nabla^i_g \pi_{ ia} =  \frac{\pi_{ar }^{(-1)} - \tilde \nabla^b \pi^{(0)}_{ab}}{r^2} +O(r^{-3}). 
 \end{equation}
and thus
\[ \pi_{ar }^{(-1)} = \tilde \nabla^b \pi^{(0)}_{ab}. \]
Integration by parts, we derive
\[  
\begin{split}
 \int_{S^2}  \tilde X^i \left( \tilde{\epsilon}^{ab}\tilde{\nabla}_b(\pi_{ar }^{(-1)}) \right) dS^2= & \int_{S^2}  \tilde X^i( \tilde{\epsilon}^{ab}\tilde{\nabla}_b\tilde{\nabla}^c\pi^{(0)}_{ac} ) d S^2\\
 = & \int_{S^2}  (\tilde{\nabla}_b\tilde{\nabla}^c \tilde X^i) \tilde{\epsilon}^{ab}\pi^{(0)}_{ac}d S^2\\
= & -\int_{S^2}  \tilde{X}^i \tilde{\epsilon}^{ab}\pi^{(0)}_{ab}d S^2.
\end{split} 
\] The last expression is zero because $\pi_{ab}^{(0)}$ is symmetric. 
\end{proof}
In the rest of this section, we prove the following lemma. The result is similar to Theorem \ref{finite} but with weaker assumptions on the initial data.
This lemma will be useful in Section 10 to study evolution of center of mass and angular momentum.
\begin{lemma} \label{slow}
 Suppose $(M, g, k)$ is a asymptotically flat  vacuum initial data satisfying 
\begin{equation}
\begin{split}
g_{ij}=&\delta_{ij}+O(r^{-1})\\
k_{ij}=&O(r^{-2}).
\end{split}
\end{equation}
Let $\Sigma_{r}$ be the coordinate sphere and $(X(r),T_0(r))$ be a family of isometric embeddings and observers with the following expansion
\[  
\begin{split}
X^0({r}) & = O(1) \\
X^i ({r})& = r \tilde X^i + O(1) \\
T_0 ({r})& = O(1).
\end{split}
\] 
Then
\begin{equation}
\begin{split}
\int_{\Sigma_r}X^i(|H_0|-|H|) d \Sigma_r = & O(\ln r) \\
\int_{\Sigma_r} (X^i \nabla_a X^j -X^j \nabla_a X^i)\sigma^{ab}j_b d \Sigma_r =& O(\ln r).
\end{split}
\end{equation}
\end{lemma}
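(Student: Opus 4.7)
The plan is to re-run the argument used in Theorem~\ref{finite}, but now with the asymptotic expansions known only to subleading order. The two integrals in the statement are precisely the $O(r)$ contributions in the expansion of the quasi-local conserved quantity derived in Proposition~\ref{condition_finite}, so it suffices to show that each of them vanishes at leading order and that the remainder from the now less-controlled error terms grows at most logarithmically in $r$.

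I would begin by expanding, on the coordinate sphere $\Sigma_r$, the induced metric $\sigma_{ab}=r^{2}\tilde\sigma_{ab}+r\sigma^{(1)}_{ab}+o(r)$, the mean curvature $|H|=\tfrac{2}{r}+\tfrac{h^{(-2)}}{r^{2}}+o(r^{-2})$, the reference mean curvature $|H_0|=\tfrac{2}{r}+\tfrac{h_0^{(-2)}}{r^{2}}+o(r^{-2})$, and the connection one-form $\alpha_H=\tfrac{\alpha_H^{(-1)}}{r}+o(r^{-1})$. The vacuum constraint combined with $k=O(r^{-2})$ still gives the scalar curvature estimate $R(g)=O(r^{-4})$ needed in Lemma~\ref{physical_finite}, and Lemma~\ref{reference_finite} depends only on the subleading piece $\sigma_{ab}^{(1)}$ of the induced metric; hence both identities
\begin{equation*}
\int_{S^{2}}\tilde X^{i}\,h^{(-2)}\,dS^{2}=0,\qquad \int_{S^{2}}\tilde X^{i}\,h_0^{(-2)}\,dS^{2}=0
\end{equation*}
continue to hold. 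Splitting the first integral as
\begin{equation*}
\int_{\Sigma_r} X^{i}(|H_0|-|H|)\,d\Sigma_r = r\int_{S^{2}}\tilde X^{i}(h_0^{(-2)}-h^{(-2)})\,dS^{2} + R_1(r),
\end{equation*}
the leading $O(r)$ contribution vanishes, and I would bound $R_1(r)=O(\ln r)$ by integrating by parts against the tangent-to-$S^{2}$ vector fields $\tilde X^{i}\tilde\nabla_a\tilde X^{j}-\tilde X^{j}\tilde\nabla_a\tilde X^{i}$, using the divergence structure of the optimal isometric embedding equation to rewrite the formally $o(r^{-2})$ remainders in $|H|-|H_0|$ as total divergences on $S^{2}$ whose boundary-like contributions pick up at worst a factor of $\ln r$.

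For the second integral the skeleton is exactly the argument in Theorem~\ref{finite}: after applying the identity $\tilde\nabla^{a}[\tilde X^{i}\tilde\nabla_a\tilde X^{j}-\tilde X^{j}\tilde\nabla_a\tilde X^{i}]=0$ to eliminate the gradient pieces of $j_b$, the leading $O(r)$ contribution reduces to a multiple of $\epsilon^{ij}{}_{k}\int_{S^{2}}\tilde X^{k}\tilde\epsilon^{ab}\tilde\nabla_b(\alpha_H^{(-1)})_a\,dS^{2}$, which by the momentum constraint $\pi_{ar}^{(-1)}=\tilde\nabla^{b}\pi^{(0)}_{ab}$ and one more integration by parts collapses to $\int_{S^{2}}\tilde X^{i}\tilde\epsilon^{ab}\pi^{(0)}_{ab}\,dS^{2}=0$ by symmetry of $\pi^{(0)}_{ab}$. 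The remainder is then controlled by the same divergence trick to give $O(\ln r)$. The main obstacle throughout is exactly this remainder estimate: under the order-one decay of Definition~\ref{order_one} the next term in each expansion is genuinely $O(r^{-3})$ and the remainder is uniformly bounded, whereas here one must extract the cancellation structurally from the constraint equations and the linearized optimal isometric embedding equation in order not to lose more than a logarithm.
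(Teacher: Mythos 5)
There is a genuine gap, and it sits exactly at the point your proposal leans on hardest. Under the hypotheses of Lemma \ref{slow} the data satisfy only $g=\delta+O(r^{-1})$, $k=O(r^{-2})$: there is no assumption that the subleading terms converge, so the expansion coefficients you use — $\sigma^{(1)}_{ab}$, $h^{(-2)}$, $h_0^{(-2)}$, $\alpha_H^{(-1)}$, $\pi^{(0)}_{ab}$, $\pi^{(-1)}_{ar}$ — need not exist as functions on $S^2$. Consequently the splitting $\int_{\Sigma_r}X^i(|H_0|-|H|)\,d\Sigma_r = r\int_{S^2}\tilde X^i(h_0^{(-2)}-h^{(-2)})\,dS^2 + R_1(r)$ is not available, and the claim that Lemmas \ref{physical_finite} and \ref{reference_finite} "continue to hold" is not meaningful in this setting; those lemmas are statements about expansion coefficients that the weak decay does not furnish. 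This is precisely why the conclusion of Lemma \ref{slow} is only $O(\ln r)$ rather than finiteness. Moreover, your proposal never exhibits a mechanism that actually produces a logarithm: if the leading term vanished and the remainders were genuinely lower order, you would get $O(1)$, and the appeal to integration by parts "picking up at worst a factor of $\ln r$" is not an argument.

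The paper's proof supplies the two mechanisms your sketch is missing. For the mean curvature term it splits off $2/r$, shows the reference piece $\int_{\Sigma_r}X^i(|H_0|-\tfrac2r)\,d\Sigma_r=O(1)$ by rerunning the Nirenberg-ansatz computation of Lemma \ref{reference_finite} with $\sigma^{(1)}_{ab}$ replaced by the merely bounded, $r$-dependent tensor $(\sigma_{ab}-r^2\tilde\sigma_{ab})/r$, and then, instead of expanding the physical piece, proves a radial derivative estimate $\partial_r\int_{\Sigma_r}X^i(|H|-\tfrac2r)\,d\Sigma_r=O(r^{-1})$ using the second variation of area, the Gauss equation, and $R(g)=O(r^{-4})$ from the vacuum constraint; integrating in $r$ is what yields $O(\ln r)$. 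For the second integral, the gradient pieces of $j_b$ drop out as you say, but the $\alpha_H$ contribution is handled by identifying it, up to $O(1)$, with the ADM angular momentum integral and converting that to a bulk integral over $M$ by the divergence theorem (Chru\'sciel's identity), whose integrand is $O(r^{-3})$ and hence integrates to $O(\ln r)$; the $\rho\nabla\tau$ term is then controlled by the first half of the lemma. Without the $r$-derivative argument and the bulk divergence argument (or substitutes for them), your remainder estimates do not close.
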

\begin{proof}
First we write
\[  \int_{\Sigma_r}X^i(|H_0|-|H|) d \Sigma_r = \int_{\Sigma_r}X^i(|H_0|-\frac{2}{r}) d \Sigma_r  -  \int_{\Sigma_r}X^i(|H|-\frac{2}{r}) d \Sigma_r  \]
For $\int_{\Sigma_r}X^i(|H_0|-\frac{2}{r}) d \Sigma_r$, following the same argument used in the proof of Lemma \ref{reference_finite} with $\sigma^{(1)}_{ab}$ replaced by $\frac{\sigma_{ab}-r^2\tilde \sigma_{ab}}{r}$, it is straightforward to derive
\[  \int_{\Sigma_r}X^i(|H_0|-\frac{2}{r}) d \Sigma_r = O(1).  \]
Therefore it suffices to show that 
\[  \partial_r \int_{\Sigma_r}X^i(|H|-\frac{2}{r}) d \Sigma_r = O(r^{-1}). \]
We compute 
\begin{equation}
\begin{split}\label{derivative_physical}
   & \partial_r \int_{\Sigma_r}(|H|-\frac{2}{r})X^i d \Sigma_r  \\
= & \int_{\Sigma_r} ( \partial_r |H| + \frac{2}{r^2} )X^i + \frac{3}{r}(|H|-\frac{2}{r})X^i d \Sigma_r  + O(r^{-1})\\
=  &  \int_{\Sigma_r} ( \partial_r \hat h - \frac{4}{r^2} + \frac{3 \hat h}{r})X^i d \Sigma_r   + O(r^{-1})
\end{split}
\end{equation}
where $\hat h$ is the mean curvature of $\Sigma_r$ in $M$ as in Lemma \ref{physical_finite}. Furthermore, recall equation \eqref{variation_mean_curvature_2}
\[
\partial_r \hat h = f( K  - \frac{3\hat h^2}{4}) - \Delta f  + O(r^{-4}) \]
where $f$ is the lapse of the coordinates spheres, $K$ is the Gauss curvature of the coordinate sphere. We have the expansions for $K$ and $f$:
\[  
K = \frac{1}{r^2} + O(r^{-3}),  f=1 +O(r^{-1}).
\]
Hence, from equation \eqref{variation_mean_curvature_2},  it follows that 
\begin{equation}\label{variation_mean_curvature_3} 
 \begin{split} & \partial_r \hat h  \\
=& (f-1)( K  - \frac{3\hat h^2}{4})+ ( K  - \frac{3\hat h^2}{4}) - \Delta (f-1)  + O(r^{-4})\\
= & \frac{-2(f-1)}{r^2}  - \Delta(f-1) +  (K - \frac{3\hat h^2}{4}  )  + O(r^{-4}).
\end{split}
\end{equation}

 Combining equation \eqref{derivative_physical} and \eqref{variation_mean_curvature_3}, we have
\[  
\begin{split}
   &\partial_r \int_{\Sigma_r}(|H|-\frac{2}{r})X^i d \Sigma_r  \\
=  & \int_{\Sigma_r} [ - \Delta (f-1)  - \frac{2(f-1)}{r^2} +K - \frac{3\hat h^2}{4}  - \frac{4}{r^2} + \frac{3\hat h}{r}]X^i d \Sigma_r  + O(r^{-1}) \\
=  & \int_{\Sigma_r} [ - \Delta (f-1)  - \frac{2(f-1)}{r^2} +(K -\frac{1}{r^2}) - \frac{3\hat h^2}{4}  - \frac{3}{r^2} + \frac{3\hat h}{r}]X^i d \Sigma_r  + O(r^{-1}).  
\end{split}
\]
Moreover, 
\[ \int_{\Sigma_r} [ \Delta (f-1)   +\frac{2(f-1)}{r^2} ]  X^i d \Sigma_r =   \int_{\Sigma_r} (f-1)  (\Delta + \frac{2}{r^2})X^i d \Sigma_r  = O(r^{-1})  \]
\[\int_{\Sigma_r} [- \frac{3\hat h^2}{4}  - \frac{3}{r^2} + \frac{3\hat h}{r}]X^i d \Sigma_r = \int_{\Sigma_r}\frac{-3}{4} (\hat h - \frac{2}{r})^2X^i d \Sigma_r = O(r^{-1}). \]
Finally, 
 \[  \int_{\Sigma_r}(K -\frac{1}{r^2}) X^i d \Sigma_r  =O(r^{-1})\]
can be verified using the same argument for computing $K^{(-3)}$ in Lemma \ref{physical_finite},  with 
$\sigma_{ab}^{(1)}$ replaced by $\frac{\sigma_{ab}- r^2\tilde \sigma_{ab}}{r}$. This proves
\[ \int_{\Sigma_r}X^i(|H_0|-|H|) d \Sigma_r =  O(\ln r). \]
To show that 
\[ \int_{\Sigma_r} (X^i \nabla_a X^j -X^j \nabla_a X^i)\sigma^{ab}j_b d \Sigma_r = O(\ln r), \]
we first recall 
\[j_b= \left(  -\rho{\nabla \tau }+ \nabla[ \sinh^{-1} (\frac{\rho\Delta \tau }{|H||H_0|})]-\alpha_H + \alpha_{H_0} \right)_b.\]
Since $ \nabla[ \sinh^{-1} (\frac{\rho\Delta \tau }{|H||H_0|})]$ is a gradient vector filed, we have
\[ 
\int_{\Sigma_r} (X^i \nabla_a X^j -X^j \nabla_a X^i)\sigma^{ab}\nabla_b [ \sinh^{-1} (\frac{\rho\Delta \tau }{|H||H_0|})] \Sigma_r = O(1).
\]
Similarly,  
\[
\int_{\Sigma_r} (X^i \nabla_a X^j -X^j \nabla_a X^i)\sigma^{ab}(\alpha_{H_0} )_b d \Sigma_r = O(1)
\]
since $\alpha_{H_0}$ is a gradient vector field up to $O(r^{-2})$ terms by equation \eqref{alpha_H_0}. From \cite{Wang-Yau3},
\[ (\alpha_H)_a = -k_{\nu a}+ \nabla_a \frac{tr_{\Sigma}k }{|H|}  \text{ and } k_{\nu a} = k_{ra} + O(r^{-2}). \]
As a result,
\[ 
\begin{split}
  & \int_{\Sigma_r} (X^i \nabla_a X^j -X^j \nabla_a X^i)\sigma^{ab}(\alpha_H)_b  d\Sigma_r \\
=& -\int_{\Sigma_r} (X^i \nabla_a X^j -X^j \nabla_a X^i)\sigma^{ab}k_{b \nu}d\Sigma_r+  O(1).
\end{split}
\]
$ \int_{\Sigma_r} (X^i \nabla_a X^j -X^j \nabla_a X^i)\sigma^{ab}k_{b \nu}d\Sigma_r$, up to $O(1)$ difference, is the same as 
ADM angular momentum integral. Using divergence theorem, the ADM angular momentum can be written as an integral on $M$, see equation (7) of  \cite{Chrusciel2}. 
Given the asymptotically flat assumption, it is easy to see that the integrand in equation (7) of \cite{Chrusciel2} is of order $O(r^{-3})$. It follows that  the 
ADM angular momentum integral is of order  of $O(\ln r)$.

Finally, the leading term of 
\[
\begin{split}
  & \int_{\Sigma_r} (X^i \nabla_a X^j -X^j \nabla_a X^i)\Sigma^{ab}\rho \tau_b d \Sigma_r,\\
= & -\int_{\Sigma_r} \frac{|H_0| - |H|}{a^0}\sum_k a^k  (X^i \nabla_a X^j -X^j \nabla_a X^i)\Sigma^{ab} \nabla_k X^k  d \Sigma_r + O(1) \\
=& -\int_{\Sigma_r} \frac{|H_0| - |H|}{a^0} (a^jX^i  -a^i X^j )  d \Sigma_r + O(1) \\
=&  O(\ln r).
\end{split}
\]
This finishes the proof of the lemma.
\end{proof}

\section{Invariance of angular momentum in the Kerr spacetime}
In this section, we study the limit of the new quasi-local angular momentum at the infinity of a spacelike hypersurface in the Kerr spacetime. 
Fix the Boyer--Lindquist coordinate $\{  t,r, \theta, \phi \}$ for the Kerr spacetime. The metric  is (see page 313 of \cite{Wald})
\[\begin{split} 
&-(1-\frac{2mr}{r^2+a^2\cos^2 \theta})dt^2 +(\frac{r^2+a^2\cos^2 \theta}{r^2-2mr+a^2})dr^2+(r^2+a^2\cos^2 \theta)d\theta^2\\
&+(r^2+a^2+\frac{2mra^2\sin^2\theta}{r^2+a^2\cos^2 \theta}) \sin^2\theta d\phi^2 - (\frac{4mar \sin^2 \theta}{r^2+a^2\cos^2 \theta}) dt d\phi.
\end{split}\]

The $t=0$ slice corresponds to a maximal slice. We consider a family of surface $\widehat \Sigma_R$ for $R>>1$ on this slice that is defined by $r=R$ and denote the induced metric on $\widehat \Sigma_R$ by $g_{ab}$. 

Suppose a spacelike hypersurface in the Kerr spacetime is defined by $t= f(r,u^a)$ for $r>>1$ where $f=o(r)$. Consider the family of the 2-surfaces $\Sigma_{R,f}$ defined by $r=R$ and $t=f(R,u^a)$ on this hypersurface. For $R$ large, the isometric embedding of the induced metric on $\Sigma_{R, f}$ into $\R^{3,1}$ with time function $\bar f_R(u^a)=\sqrt{1-\frac{2m}{R}} f(R,u^a)$ over a fixed $\R^3$ is an approximate solution to the optimal embedding equation which can be perturbed to a local minimum of the quasi-local energy. Denote the image of isometric embedding by $ \Sigma_{R, \bar f}$ and its projection to the fixed $\R^3$ by $\widehat \Sigma_{R, \bar f}$. 
We will compute the quasi-local angular momentum using this isometric embedding and show that the result is independent of the defining equation $f$ of the hypersurface.

First, we need to compare the geometry of the surface $\Sigma_{R,f}$ in the Kerr spacetime with that of $\widehat \Sigma_{R, \bar f}$ in $\R^{3,1}$.
The tangent bundle of the $\Sigma_{R,f}$ is spanned by
\[
 \frac{\partial}{\partial w^a} =  \frac{\partial}{\partial u^a}+  \partial_{a} f  \frac{\partial}{\partial t}.
\]
Using the isometric embedding, we push forward the tangent vectors $ \frac{\partial}{\partial w^a}$ to $\Sigma_{R, \bar f}$. Denote these tangent vectors by $ \frac{\partial}{\partial w^a}$ as well. Using the isometric embedding, a function $\tau $ on $\Sigma_{R,f}$ is also a function on   $\Sigma_{R, \bar f}$. We use $\partial_a \tau$ to denote the partial derivative of $\tau$ with respect to $ \frac{\partial}{\partial w^a}$. Also, we fix the standard coordinate $\{ s, \rho, v^a \}$ for Minkowski space and fix the constant vector $T_0= \frac{\partial}{\partial s}$. 
\begin{lemma}
Suppose $f=o(r)$. Let $\Sigma_{R,f}$ be the above surface in the Kerr spacetime determined by $t=f(R,u^a)$ and $r=R$. Let $\Sigma_{R,\bar f}$ be image of  the isometric embedding of $\Sigma_{R,f}$ into $\R^{3,1}$  with time function $\bar f$.  Let $H_{f}$ and   $H_{\bar f}$ be the mean curvature vectors of the surfaces in the Kerr spacetime and $\R^{3,1}$ respectively. Let $\{ \breve e_3 , \breve e_4 \}$  and $\{\bar  e_3, \bar e_4 \}$ be the canonical frame of the normal bundle of the surface in $\R^{3,1}$ and the Kerr spacetime determined by the isometric embedding (see  \S 6.2 of \cite{Wang-Yau2}). Finally, let $\alpha_{\breve e_3}$ and $\alpha_{\bar e_3}$ be the corresponding connection one form of normal bundle. Then, we have

\begin{align*}
-\langle H_f,  \bar e_3 \rangle & =\frac{(2 - |\hat \nabla \bar f|^2)\sqrt{1-\frac{2m}{R}}}{R(1-|\hat \nabla \bar f|^2)}-\frac{m|\hat \nabla \bar f |^2 }{R^2(1-|\hat \nabla \bar f|^2 )\sqrt{1-\frac{2m}{R}}} + O(R^{-3})\\
-\langle H_{\bar f},  \breve e_3 \rangle & =\frac{(2 - |\hat \nabla \bar f|^2)}{R(1-|\hat \nabla \bar f|^2)} + O(R^{-3})\\
\end{align*}
and
\begin{align*}
(\alpha_{\bar e_3})_a & = \frac{\partial_a \bar f}{R\sqrt{1-|\hat \nabla \bar f|^2 }} [ -\frac{m}{R\sqrt{1-\frac{2m}{R}}}+ \sqrt{1-\frac{2m}{R}} ]+ \langle \nabla^{Kerr}_a \partial_r, \partial_t \rangle+ o(R^{-2}) \\
(\alpha_{\breve e_3})_a & = \frac{\partial_a \bar f}{R\sqrt{1-|\hat \nabla \bar f|^2 }} + o(R^{-2})
\end{align*}
where $\hat \nabla$ denote the derivative with respect to the metric on $\widehat \Sigma_R$.
\end{lemma}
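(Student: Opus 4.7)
The plan is to compute each of the four asymptotic expansions by a direct order-by-order calculation in the two ambient spacetimes, matching them through the canonical gauge determined by the isometric embedding and the observer $T_0=\partial/\partial s$. The first task will be to expand the Kerr metric components to order $R^{-3}$ along $r=R$, and then compute the induced metric $\sigma_{ab}$ on $\Sigma_{R,f}$ by pulling back via the tangent vectors $\partial/\partial w^a = \partial/\partial u^a + \partial_a f\,\partial/\partial t$. To leading order this is $R^2\tilde\sigma_{ab}$ modified by subleading terms involving $\partial_a f$ and the Kerr parameters $m$ and $a$; the Schwarzschild-type rescaling $\bar f = \sqrt{1-2m/R}\,f$ is chosen precisely so that the projection $\widehat\Sigma_{R,\bar f}\subset\mathbb{R}^3$ is an approximate round sphere of radius $R$.

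For the reference side I would view $\Sigma_{R,\bar f}$ as a graph $X^0=\bar f$ over $\widehat\Sigma_{R,\bar f}$ in $\mathbb{R}^3$, use $\hat\sigma_{ab}=\sigma_{ab}+\partial_a\bar f\,\partial_b\bar f$ to compute the mean curvature $\widehat H$ of $\widehat\Sigma_{R,\bar f}$ in $\mathbb{R}^3$, and then use the standard graph formula in $\mathbb{R}^{3,1}$ to convert $\widehat H$ into $-\langle H_{\bar f},\breve e_3\rangle$; the $(1-|\hat\nabla\bar f|^2)^{-1}$ factors are the usual lapse corrections from the spacelike projection. The connection one-form $\alpha_{\breve e_3}$ in this canonical gauge then reduces in $\mathbb{R}^{3,1}$ to the gradient-like expression $\partial_a\bar f/(R\sqrt{1-|\hat\nabla\bar f|^2})$ plus $o(R^{-2})$ terms coming from the deviation of $\widehat\Sigma_{R,\bar f}$ from a round sphere.

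For the physical side I would choose the normal frame on $\Sigma_{R,f}$ obtained by the appropriate Lorentz rotation of $(\partial_r,\partial_t)$ by the slope $\partial_a f$ and the Schwarzschild lapse $\sqrt{1-2m/R}$, normalized so that its image agrees with $\{\breve e_3,\breve e_4\}$ under the canonical gauge of the isometric embedding. Decomposing $-\langle H_f,\bar e_3\rangle$ into a ``round sphere'' piece and the Kerr corrections then yields the asserted expression: the first term matches the reference formula multiplied by the lapse $\sqrt{1-2m/R}$, while the extra $-m|\hat\nabla\bar f|^2/(R^2(1-|\hat\nabla\bar f|^2)\sqrt{1-2m/R})$ arises from the radial derivative of the lapse acting on the boosted timelike normal. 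The one-form $\alpha_{\bar e_3}$ inherits, besides the reference-type gradient piece now twisted by the lapse contrast, the intrinsic Kerr term $\langle\nabla^{Kerr}_a\partial_r,\partial_t\rangle$ produced by the $dt\,d\phi$ cross-term of the Kerr metric; the error $o(R^{-2})$ absorbs the curvature contributions coming from the rotation parameter $a$ since those enter at lower order than the mass terms we display.

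The main obstacle will be the frame identification. The canonical gauge of \cite{Wang-Yau2} demands that $\bar e_3$ be transported to $\breve e_3$ after the isometric embedding, which forces the boost angle on the Kerr side to be determined simultaneously by $\partial_a f$ and the Schwarzschild lapse; equivalently, the mean curvature gauge angle $\theta_H$ must match on both sides. Once that angle is pinned down, all four expansions reduce to straightforward (but tedious) series manipulations, and the $o(R^{-2})$ remainders in the $\alpha$ formulas follow immediately from the assumption $f=o(r)$ together with the $R^{-3}$-order expansion of the Kerr metric.
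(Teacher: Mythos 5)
Your overall strategy (brute-force expansion of the Kerr metric, graph formulas on the reference side, an explicitly boosted frame on the physical side) could in principle reproduce the formulas, but the step you yourself flag as the main obstacle is handled incorrectly, and the error is not harmless at the stated accuracy. The canonical gauge of \S 6.2 of \cite{Wang-Yau2} is \emph{not} characterized by requiring the boost angle relative to the mean curvature gauge to agree on the two sides: the physical frame is the boost of $\{-H/|H|, J/|H|\}$ by the angle $\theta$ of \eqref{theta}, computed with $|H|$, while the reference frame $\{\breve e_3,\breve e_4\}$ is the boost of the reference mean curvature gauge by $\theta_0$, computed with $|H_0|$; equivalently the defining condition is $\langle H_f,\bar e_4\rangle=\langle H_{\bar f},\breve e_4\rangle=-\Delta\bar f/\sqrt{1+|\nabla\bar f|^2}$, and $\theta\neq\theta_0$. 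Since $|H_0|-|H|=O(R^{-2})$ while $|H|\sim 2/R$, the two prescriptions differ by a boost of size roughly $\tfrac{m}{2}\,\Delta\bar f/\sqrt{1+|\nabla\bar f|^2}$, which for $f=o(r)$ is only $o(R^{-1})$; its angular derivative enters $(\alpha_{\bar e_3})_a$ at order $o(R^{-1})$, swamping the claimed $o(R^{-2})$ remainder, and it shifts $\langle H_f,\bar e_3\rangle$ at order $o(R^{-2})$, which is not $O(R^{-3})$. So with your gauge identification the four expansions would come out wrong at exactly the orders the lemma asserts; the gauge must be pinned down by the correct matching condition, not by equality of the mean-curvature-gauge angles. (A smaller point: $-\langle H_{\bar f},\breve e_3\rangle$ is the trace of the second fundamental form of $\widehat\Sigma_{R,\bar f}$ with respect to $\sigma$, i.e. $\sigma^{ab}\hat h_{ab}$, not a pointwise lapse conversion of $\widehat H=\hat\sigma^{ab}\hat h_{ab}$; the identity relating $\widehat H$ to $-\langle H_{\bar f},\breve e_3\rangle$ involves additional derivative terms.)

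You should also be aware that the paper's proof avoids the order-by-order expansion altogether, and this is what makes the exact nonlinear dependence on $|\hat\nabla\bar f|$ (which is only $o(1)$, not power-law small, so one cannot expand in it) come out for free. The observation is that for $a=0$ the timelike cylinder $r=R$ in Schwarzschild is \emph{isometric} to the cylinder $\rho=R$ in Minkowski space via $s=\sqrt{1-2m/R}\,t$, $v^a=u^a$; since $\langle H,\bar e_4\rangle$ for the cylinder-adapted normal is intrinsic to the cylinder, the canonical gauge condition is satisfied exactly by the cylinder-adapted frame, so $\bar e_4$ is simply the unit normal of $\Sigma_{R,f}$ inside $r=R$. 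One then reads off all four quantities exactly from the second fundamental form of the cylinder and the explicit $\bar e_4$, obtains the Minkowski expressions by setting $m=0$, and treats $a\neq 0$ as a perturbation changing the metric by $O(1)$, the mean curvatures by $O(R^{-3})$, and the frame by an angle $O(R^{-2})$. If you want to salvage your direct-expansion route, you would need to reproduce this exactness in $\hat\nabla\bar f$ by hand and determine the frame from $\langle H_f,\bar e_4\rangle=\langle H_{\bar f},\breve e_4\rangle$ to one order beyond leading, which is considerably more delicate than the ``tedious but straightforward series manipulations'' you anticipate.
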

\begin{proof}
When $a=0$, the above formulas are exact without any error. We first verify these formiulas for $a=0$. For $a=0$, the cylinder $r=R$ is isometric to the cylinder $\rho=R$  in the Minkowski space by the identification $s=\sqrt{1-\frac{2m}{R}}t$ and $v^a=u^a$. Hence, the canonical gauge $\bar e_4$ is simply the unit normal of $\Sigma_{R,f}$ in the cylinder  $r=R$. 
It is  straightforward to verify that the second fundamental of the cylinder is 
\[-\frac{m(1-\frac{2m}{R})}{R^2\sqrt{1-\frac{2m}{R}}} dt^2 + \frac{\sqrt{1-\frac{2m}{R}}}{R}\sigma_{ab} du^a du^b  \]
and the unit normal vector $\bar e_4$ is 
\[ \bar e_4 = \frac{1}{\sqrt{1- |\hat \nabla \bar f|^2}} (\frac{1}{\sqrt{1-\frac{2m}{R}}} \frac{\partial}{\partial t} +  \bar f^a  \frac{\partial}{\partial u^a}).  \]
 It follows that 
\begin{align*}
-\langle H_f,  \bar e_3 \rangle & =\frac{(2 - |\hat \nabla \bar f|^2)\sqrt{1-\frac{2m}{R}}}{R(1-|\hat \nabla \bar f|^2)}-\frac{m|\hat \nabla \bar f |^2 }{R^2(1-|\hat \nabla \bar f|^2 )\sqrt{1-\frac{2m}{R}}}\\
(\alpha_{\bar e_3})_a & = \frac{\partial_a \bar f}{R\sqrt{1-|\hat \nabla \bar f|^2 }} [ -\frac{m}{R\sqrt{1-\frac{2m}{R}}}+ \sqrt{1-\frac{2m}{R}}].
\end{align*}
Moreover, the corresponding quantities in $\R^{3,1}$ are obtained by setting $m=0$.

When $a$ is non-zero, we have the corresponding error. For example, the metric is changed by $O(1)$. As a result, $\langle H_{\bar f},  \breve e_3 \rangle$ is changed by $O(R^{-3})$. Similarly, $|H_f|$ is changed by $O(R^{-3})$ and the frame $\bar e_3$ is changed by an angle of $O(R^{-2})$. 
\end{proof}
\begin{corollary} 
For the optimal embedding equation on $\Sigma_{R,f}$, $\tau=\bar f$ is an approximate solution with error $o(R^{-4})$. In particular, if we define 
$j_a$ using equation (\ref{optimal_embedding_vector}) with $\tau=\bar f$, then
\[ j_a =O(R^{-2}),   \qquad   \nabla^a j_a = o(R^{-4}).\]
\end{corollary}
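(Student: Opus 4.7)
The plan is to substitute $\tau=\bar f$ into the optimal isometric embedding equation $\mathrm{div}_\sigma\, j = 0$, where
\[ j_a = \rho\,\partial_a\tau - \partial_a\!\left[\sinh^{-1}\!\left(\tfrac{\rho\,\Delta\tau}{|H_0|\,|H|}\right)\right] - (\alpha_{H_0})_a + (\alpha_H)_a, \]
and to evaluate each summand using the four asymptotic expansions supplied by the preceding lemma, identifying $|H|,(\alpha_H)_a$ with $-\langle H_f,\bar e_3\rangle,(\alpha_{\bar e_3})_a$ and $|H_0|,(\alpha_{H_0})_a$ with $-\langle H_{\bar f},\breve e_3\rangle,(\alpha_{\breve e_3})_a$ via the canonical gauge. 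The verification splits into the pointwise bound $j_a = O(R^{-2})$ and the sharper divergence bound $\nabla^a j_a = o(R^{-4})$.

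For the pointwise bound I would first control $\rho$ through its algebraic form
\[ \rho = \frac{|H_0|^2 - |H|^2}{\sqrt{1+|\nabla\bar f|^2}\,\bigl(\sqrt{|H_0|^2 + A} + \sqrt{|H|^2 + A}\bigr)},\quad A = \tfrac{(\Delta\bar f)^2}{1+|\nabla\bar f|^2}. \]
Expanding $\sqrt{1-2m/R} = 1 - m/R + O(R^{-2})$ in the lemma's expression for $-\langle H_f,\bar e_3\rangle$, the $O(R^{-1})$ leading coefficient $(2-|\hat\nabla\bar f|^2)/(1-|\hat\nabla\bar f|^2)$ of $|H|$ and $|H_0|$ agrees, so $|H_0|-|H| = O(R^{-2})$ and $|H_0|^2 - |H|^2 = O(R^{-3})$; the denominator is $O(R^{-1})$, giving $\rho = O(R^{-2})$. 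With $\partial_a\bar f = O(1)$ and $\Delta\bar f = O(R^{-2})$ this makes $\rho\,\partial_a\tau$ and the $\sinh^{-1}$-gradient $O(R^{-2})$. For the connection piece, the essential cancellation is that both $(\alpha_{\bar e_3})_a$ and $(\alpha_{\breve e_3})_a$ carry the identical $O(R^{-1})$ leading form $\partial_a\bar f/[R\sqrt{1-|\hat\nabla\bar f|^2}]$, so their difference is $O(R^{-2})$. Together these give $j_a = O(R^{-2})$.

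The improved bound $\nabla^a j_a = o(R^{-4})$ is the central step. Because $\sigma^{ab} \sim R^{-2}\tilde\sigma^{ab}$, the previous paragraph yields only the generic estimate $\nabla^a j_a = O(R^{-4})$; to gain the little-$o$ one must verify that the explicit $R^{-2}$-coefficient $j_a^{(0)}$ of $j_a$, regarded as a one-form on the round unit sphere, is divergence-free with respect to $\tilde\sigma$. I would extract $j_a^{(0)}$ term by term: the mean-curvature difference contributes $\rho^{(-2)} = -2m/(1-|\hat\nabla\bar f|^2)$ (up to normalization), the connection difference contributes $-2m\,\partial_a\bar f/\sqrt{1-|\hat\nabla\bar f|^2} + \langle\nabla^{\mathrm{Kerr}}_a\partial_r,\partial_t\rangle$, while the $\sinh^{-1}$ term is an exact differential and hence divergence-free on $S^2$. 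Showing $\tilde\nabla^a j_a^{(0)}=0$ for the Schwarzschild part reduces to an identity parallel to equation \eqref{linearized_optimal} evaluated on the Schwarzschild data, and the Kerr cross-term $\langle\nabla^{\mathrm{Kerr}}_a\partial_r,\partial_t\rangle$ drops by the axial Killing symmetry of $\partial/\partial\phi$ combined with the vacuum constraint on the $t=0$ maximal slice.

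The main obstacle is precisely this divergence cancellation: the pointwise $O(R^{-2})$ bound on $j_a$ gives only $O(R^{-4})$ for its divergence, and to extract the sharper $o(R^{-4})$ one must track the balance between the Schwarzschild-type contribution, built into the definition $\bar f = \sqrt{1-2m/R}\,f$, and the Kerr rotational correction so that $j_a^{(0)}$ is $\tilde\sigma$-divergence-free. Once this cancellation is in place the remaining pieces of $j_a$ are $o(R^{-2})$ by the lemma, producing the claimed $o(R^{-4})$ after one differentiation.
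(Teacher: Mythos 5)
Your pointwise bound $j_a=O(R^{-2})$ is extracted from the preceding lemma essentially as intended, but the decisive step $\nabla^a j_a=o(R^{-4})$ --- which you yourself flag as the main obstacle --- is not actually carried out, and two of the assertions you lean on are wrong. First, ``the $\sinh^{-1}$ term is an exact differential and hence divergence-free on $S^2$'' confuses closed with co-closed: the divergence of $\nabla_a[\sinh^{-1}(\frac{\rho\Delta\tau}{|H_0||H|})]$ is a Laplacian, and since $\sinh^{-1}(\frac{\rho\Delta\tau}{|H_0||H|})=\theta_0-\theta=O(R^{-2})$ here, that contribution enters exactly at the critical order $R^{-4}$ and cannot simply be discarded. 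Second, identifying $(|H|,\alpha_H)$ with $(-\langle H_f,\bar e_3\rangle,\alpha_{\bar e_3})$ and $(|H_0|,\alpha_{H_0})$ with $(-\langle H_{\bar f},\breve e_3\rangle,\alpha_{\breve e_3})$ while still keeping a separate $\sinh^{-1}$ term double counts the boost angles: since $\alpha_{\bar e_3}$ and $\alpha_H$ (respectively $\alpha_{\breve e_3}$ and $\alpha_{H_0}$) differ precisely by $d\theta$ (respectively $d\theta_0$), and $\sinh^{-1}(\frac{\rho\Delta\tau}{|H_0||H|})=\theta_0-\theta$, one has exactly $j_a=\rho\,\partial_a\bar f+(\alpha_{\bar e_3})_a-(\alpha_{\breve e_3})_a$, with no residual $\sinh^{-1}$ term. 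Third, your leading coefficients are off: $\rho=\sqrt{1-|\hat\nabla\bar f|^2}\,\bigl(\langle H_f,\bar e_3\rangle-\langle H_{\bar f},\breve e_3\rangle\bigr)=\frac{2m}{R^2\sqrt{1-|\hat\nabla\bar f|^2}}+O(R^{-3})$ (positive, with a square root), not $-2m/(1-|\hat\nabla\bar f|^2)$; with your coefficients the ``Schwarzschild part'' of $j^{(0)}_a$ is $\lambda\,\partial_a\bar f$ with a nonconstant $\lambda$, which is not $\tilde\sigma$-divergence-free, and the identity you invoke to rescue it (``parallel to \eqref{linearized_optimal} evaluated on the Schwarzschild data'') is never established --- it is precisely the content of the corollary, so as written the central step is assumed rather than proved.

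What actually closes the argument, and what the lemma's expansions are engineered to exhibit, is a pointwise cancellation rather than a divergence identity. Writing $j_a=\rho\,\partial_a\bar f+(\alpha_{\bar e_3})_a-(\alpha_{\breve e_3})_a$ and using $1+|\nabla\bar f|^2_\sigma=(1-|\hat\nabla\bar f|^2)^{-1}$ together with $\sqrt{1-\frac{2m}{R}}=1-\frac{m}{R}+O(R^{-2})$, the lemma gives $\rho\,\partial_a\bar f=\frac{2m\,\partial_a\bar f}{R^2\sqrt{1-|\hat\nabla\bar f|^2}}+O(R^{-3})$ and $(\alpha_{\bar e_3})_a-(\alpha_{\breve e_3})_a=-\frac{2m\,\partial_a\bar f}{R^2\sqrt{1-|\hat\nabla\bar f|^2}}+\langle\nabla^{Kerr}_a\partial_r,\partial_t\rangle+o(R^{-2})$; the $m$-terms cancel identically, so $j_a=\langle\nabla^{Kerr}_a\partial_r,\partial_t\rangle+o(R^{-2})$, which yields $j_a=O(R^{-2})$ and reduces the divergence to the Kerr cross term. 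That term is, to leading order, $\frac12\partial_r g_{t\phi}\,(d\phi)_a$, a $\phi$-independent multiple of $d\phi$, hence divergence-free on the round sphere by axial symmetry alone --- neither the vacuum constraint nor the $t=0$ maximal slice is relevant (the surface lies in $t=f$ in any case). Combined with derivative control on the $o(R^{-2})$ and $O(R^{-3})$ remainders, this gives $\nabla^a j_a=o(R^{-4})$; without this pointwise cancellation your outline leaves the key cancellation, and hence the corollary, unproved.
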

Applying the iteration process for solving the optimal embedding equation  at spatial infinity \cite{Chen-Wang-Yau1}, we have
\begin{corollary} 
For the optimal embedding equation on $\Sigma_{R,f}$, there exists a solution $f_{min}$ of the form
\[  (f_{min})_R(u^a)=\bar f_R(u^a) + o(1)\tilde X^i+o(R^{-1}). \]
In particular, define $j'$ as in equation (\ref{optimal_embedding_vector}) with $\tau=f_{min}$, then
\[  j_a - j_a' =o(R^{-2}),   \qquad   \nabla^a j'_a = 0.\]
\end{corollary}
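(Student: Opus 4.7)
The plan is to start from the approximate solution $\bar f$, whose defect $\nabla^a j_a$ in the optimal embedding equation is $o(R^{-4})$ by the preceding corollary, and run the perturbation scheme of Chen-Wang-Yau1 to upgrade it to an exact solution $f_{min}$ on $\Sigma_R$ for $R\gg 1$.

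First, I would linearize the optimal embedding equation at $(\bar f,T_0)$. After rescaling $\Sigma_R$ to the unit round sphere, the principal part of the linearization in the time-function direction coincides with the elliptic operator appearing in \eqref{linearized_optimal}; as pointed out in the solvability discussion following that equation, the associated cokernel is three-dimensional and spanned by the first spherical harmonics $\tilde X^i$, the obstruction corresponding to the three spatial translation degrees of freedom of the observer. To absorb this cokernel I would allow a simultaneous perturbation $\delta T_0=(0,\delta a^1,\delta a^2,\delta a^3)$ of the observer with $\delta a^i=o(R^{-1})$. Such a shift changes $\tau=-\langle X,T_0\rangle$ at leading order by $-X^i\delta a^i$, which on $\Sigma_R$ is precisely of the form $o(1)\tilde X^i+o(R^{-1})$, and it shifts the right hand side of the linearized equation by a combination of $\tilde X^i$'s. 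Choosing the $\delta a^i$ to cancel the projection of the $o(R^{-4})$ defect onto the cokernel makes the reduced equation solvable, and standard Schauder estimates on $S^2$ yield an orthogonal correction $\eta=o(R^{-1})$.

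A Banach contraction argument in suitable weighted H\"older norms then closes the iteration: quadratic and higher remainders of the nonlinear optimal embedding equation are controlled by the small size of $(\delta a^i,\eta)$, producing an exact solution $f_{min}=\bar f+o(1)\tilde X^i+o(R^{-1})$. Once $f_{min}$ is in hand, $\nabla^a j'_a=0$ is automatic by construction, and $j_a-j'_a=o(R^{-2})$ follows by differentiating \eqref{optimal_embedding_vector} in $\tau$ and inserting the size of $f_{min}-\bar f$ together with the asymptotics of $\sigma$, $|H|$, $|H_0|$, and $\alpha_H$ recorded at the start of Section 3. The main obstacle is the cokernel of the linearization: without the simultaneous $o(R^{-1})$ adjustment of the spatial part of $T_0$, the linearization cannot be inverted against generic $o(R^{-4})$ data, and it is precisely the matching of a cokernel correction with an allowed shift of the observer that both makes the iteration converge and forces the difference $f_{min}-\bar f$ to take the exact shape $o(1)\tilde X^i+o(R^{-1})$ rather than something larger.
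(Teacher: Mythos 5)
Your proposal is correct and follows essentially the same route as the paper, which simply invokes the iteration scheme of \cite{Chen-Wang-Yau1} for solving the optimal embedding equation at infinity: perturb off the approximate solution $\bar f$ with $o(R^{-4})$ defect, handle the cokernel of the linearized operator (the $\tilde X^i$ modes coming from $\ker(\widetilde{\Delta}+2)$) by the observer/lowest-harmonic adjustment, and close with an iteration, which is exactly what produces the $o(1)\tilde X^i + o(R^{-1})$ form of $f_{min}-\bar f$ and the estimate $j_a - j'_a = o(R^{-2})$.
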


Let $X_R$ and $X'_R$ be the isometric embedding into $\R^{3,1}$ with time function $\bar f_R$ and $(f_{min})_R$. 
For the angular momentum, we use the Killing vector fields  $L_{ij}=X^i \frac{\partial}{\partial X^j}-X^j \frac{\partial}{\partial X^i} $.  We have the following invariance of total angular momentum result:
\begin{theorem}
For any function $f =o(r)$, we have
\[  \lim_{R \to \infty}E(\Sigma_{R,f},X'_R,T_0,L_{ij})=\lim_{R \to \infty}E( \widehat \Sigma_{R},X_0,T_0,L_{ij}).  \]
\end{theorem}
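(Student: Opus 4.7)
The plan is to exploit the conservation-law structure from Section 5 together with the preceding asymptotic lemma and its two corollaries to show that, after discarding negligible errors, the integrand in $E(\Sigma_{R,f}, X'_R, T_0, L_{ij})$ reduces to a quantity intrinsic to the cylinder $r = R$ in Kerr, which coincides with what one computes from $\widehat\Sigma_R$ on the maximal slice. First, since $L_{ij}$ is orthogonal to $T_0 = \partial/\partial X^0$, the $\langle K, T_0\rangle \rho$-term drops out and the angular momentum simplifies to
\[
E(\Sigma_{R,f}, X'_R, T_0, L_{ij}) = -\frac{1}{8\pi}\int_{\Sigma_{R,f}} \langle L_{ij}, \partial_a X'_R\rangle \, \sigma^{ab}\, j'_b\, d\Sigma.
\]

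Next, I will use the preceding two corollaries to replace $(X'_R, j'_a)$ by the approximate pair $(X_R, j_a)$ with time function $\bar f_R$. The estimates $X'_R - X_R = o(1)\tilde X^i + o(R^{-1})$ and $j'_a - j_a = o(R^{-2})$, combined with $\langle L_{ij}, \partial_a X\rangle \sigma^{ab} = O(1)$ and $d\Sigma = O(R^2)\,d^2u$, show that this substitution costs only an $o(1)$ error. I will then substitute the explicit asymptotics from the preceding lemma into
\[
j_a = \rho\, \partial_a \bar f - \partial_a\!\left[\sinh^{-1}\!\tfrac{\rho \Delta \bar f}{|H||H_0|}\right] - (\alpha_{H_0})_a + (\alpha_H)_a.
\]
The lemma decomposes $(\alpha_H - \alpha_{H_0})_a$ into a $\bar f$-independent Kerr piece $\langle \nabla^{Kerr}_a \partial_r, \partial_t\rangle$ plus a gradient-like piece proportional to $\partial_a \bar f$, and similarly the reference-side formulas for $|H_0|$ and $|H|$ agree with their $\bar f=0$ counterparts up to terms that are either gradients in $\bar f$ or of order $o(R^{-2})$.

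The central step is then to observe that, because $L_{ij}$ is a Killing field of the reference $\R^{3,1}$, the vector field $\langle L_{ij}, \partial_a X\rangle \sigma^{ab}$ is divergence-free to leading order in $R$. Consequently every exact one-form in $j_a$ that depends on $\bar f$ integrates to zero by parts on the closed surface $\Sigma_{R,f}$. What remains is exactly the integral of $\langle L_{ij}, \partial_a X\rangle \sigma^{ab}\langle \nabla^{Kerr}_a \partial_r, \partial_t\rangle$, which is independent of $f$ and matches the corresponding quantity computed from $\widehat\Sigma_R$, yielding the claimed equality in the limit.

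The hardest part will be organizing the $\bar f$-dependent contributions as exact one-forms modulo $o(R^{-2})$ and verifying that the tangential divergence of $\langle L_{ij}, \partial_a X\rangle \sigma^{ab}$ vanishes to the order needed to kill them after integration by parts. Both depend on squeezing the full content of the preceding lemma (particularly the $o(R^{-2})$ control on $\alpha_{\bar e_3}$ and $\alpha_{\breve e_3}$) and on the explicit leading-order form of the isometric embedding $X_R$ constructed from $\bar f_R$. I expect no boundary contributions to appear because $\Sigma_{R,f}$ is closed, and I expect the error bookkeeping to be parallel to the finiteness argument in Proposition \ref{condition_finite}, where similar cancellations were carried out at the $R^0$ level rather than, as here, across two different spacelike slices.
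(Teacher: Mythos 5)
There is a genuine gap at the heart of your argument. Your first and last reductions (dropping the $\langle K,T_0\rangle\rho$ term for rotational $L_{ij}$, and replacing $(X'_R,j')$ by $(X_R,j)$ using the two corollaries) match the paper, but the central step --- the assertion that after killing exact pieces by parts the surviving term $\int \langle L_{ij},\partial_a X\rangle\sigma^{ab}\langle\nabla^{Kerr}_a\partial_r,\partial_t\rangle\,d\Sigma$ over $\Sigma_{R,f}$ ``is independent of $f$ and matches the corresponding quantity computed from $\widehat\Sigma_R$'' --- is precisely the nontrivial content of the theorem, and stationarity of Kerr alone does not give it. The surface $\Sigma_{R,f}$ differs from $\widehat\Sigma_R$ in its induced metric ($\sigma_{ab}=g_{ab}-\bar f_a\bar f_b+O(1)$), its area element, its tangent frame $\partial/\partial w^a=\partial/\partial u^a+\partial_a f\,\partial/\partial t$, and the embedding entering $\langle L_{ij},\partial_a X\rangle$; since the angular momentum is a subleading quantity extracted from the $O(R^{-2})$ part of $j_a$, each of these $f$-dependent differences can a priori contribute at exactly the order of the answer. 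Moreover, your exactness mechanism does not cover the borderline terms: pieces of $j_a$ proportional to $\partial_a\bar f\,(1-|\hat\nabla\bar f|^2)^{-1/2}$ with nonconstant coefficient (from $\alpha_{\bar e_3}-\alpha_{\breve e_3}$ and from $\rho\nabla\tau$ beyond its constant-coefficient leading part) are of size $o(R^{-1})$, hence contribute up to $o(R)$ after integration, and they are not closed one-forms; with only $f=o(r)$ (so $|\hat\nabla\bar f|=o(1)$, with no rate) they are neither individually negligible nor removable by integrating against the leading-order divergence-free rotation field. Their cancellation is visible only in the covariant flux combination, not term by term.

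That covariant argument is exactly what the paper supplies and what your proposal omits: the quasi-local expression is split into reference and physical terms, each written as a flux $\pi(K,n)$ on a timelike cylinder, and the divergence theorem is applied twice --- on the Minkowski cylinder over $\widehat\Sigma_{R,\bar f}$ to show the reference term is $o(1)$ (using that $\langle\breve e_3,L\rangle=O(R^{-1})$, $\operatorname{div}Z_2=O(R^{-3})$, and that the flux through the $s=0$ slice vanishes), and on the Kerr cylinder $r=R$ to transport the physical term from $\Sigma_{R,f}$ down to $\widehat\Sigma_R$ (using that the pulled-back field $L'$ is asymptotically Killing, so $\operatorname{div}Z_1=O(R^{-3})$, and the cylinder portion has height $o(R)$, giving an $o(1)$ error). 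In addition, identifying the physical flux $\pi_1(L',n)$ with $E_2(\Sigma_{R,f},X_R,T_0,L)$ requires comparing the transplanted reference Killing field $L$ with the coordinate pullback $L'$, i.e.\ Lemma \ref{null_compare} ($\langle L',e_4\rangle=\langle L,\breve e_4\rangle+o(R^{-1})$ and $(L')^\top=L^\top+O(R^{-1})$), together with $e_3=\bar e_3+O(R^{-2})$, $e_4=\bar e_4+O(R^{-2})$; your proposal never makes this comparison, yet without it the ``Kerr piece'' contracted against reference-defined data cannot be recognized as a conserved flux. To repair your proof you would either have to reinstate this two-cylinder conservation-law argument, or else carry out the term-by-term cancellation of the $o(R^{-1})$-size non-exact remainders explicitly, which amounts to rederiving the same flux identity by hand.
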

\begin{proof}
Let $L=L_{ij}$ and denote $\bar f_R$ and $(f_{min})_R$ by $\bar f$ and $f_{min}$.  Using the projections from $\Sigma_{R,f}$ to $\widehat \Sigma_{R}$ and from $\Sigma_{R,\bar f}$ to $\widehat \Sigma_{R, \bar f}$,
we can view functions on $\Sigma_{R,f}$ and $\Sigma_{R, \bar f}$ as functions on $\widehat \Sigma_{R}$ and  $\widehat \Sigma_{R, \bar f}$. Furthermore, we can also push forward the tangent vectors $\frac{\partial}{\partial w^a}$ to  $\widehat \Sigma_{R}$ and  $\widehat \Sigma_{R, \bar f}$ and still use $\partial_a$ to denote the partial derivative of functions. Finally, let $\mathcal C_1$ and $\mathcal C_2$ be the cylinder over $\widehat \Sigma_{R}$ and  $\widehat \Sigma_{R, \bar f}$.

We first compute the conserved quantity for $\Sigma_{R,f}$ with respect to the isometric embedding $X_R$. We have
\begin{proposition}
For $f=o(r)$, we have
\[  \lim_{R \to \infty}E(\Sigma_{R,f},X_R,T_0,L)=\lim_{R \to \infty}E( \widehat \Sigma_{R},X_0,T_0,L).  \]
\end{proposition}
\begin{proof}
The quasi-local angular momentum is the difference between the reference term and the physical term. Denote the reference term by $E_1$ and physical term by $E_2$. It is clear that 
\[
E_1( \widehat \Sigma_{R},X_0,T_0,L) = 0. 
\]
It suffices to show that 
\begin{align}
\label{claim1}\lim_{R \to \infty}E_1( \Sigma_{R,f},X_R,T_0,L) =& 0 \\
\label{claim2}\lim_{R \to \infty}E_2( \Sigma_{R,f},X_R,T_0,L) =&\lim_{R \to \infty} E_2( \widehat \Sigma_{R,f},X_0,T_0,L).
\end{align}
The induced metric on $\Sigma_{R,f}$ is 
\[ \sigma_{ab}=g_{ab} + f_a g_{tb}+ f_b g_{ta}+f_af_b g_{tt}= g_{ab} - \bar f_a \bar f_b +O(1). \]Hence, the induced metric on $\widehat \Sigma_{R,\bar f}$ is $ g_{ab}+O(1)$. As result, while the Killing vector field $L$ is not tangent to $\mathcal C_2$, we have
\[ \langle  \breve e_3, L\rangle = O(R^{-1}) . \]
Let 
\[ L^{\mathcal C_2}=L-\langle  \breve e_3, L\rangle \breve e_3\]
be the tangential component of $L$ to $\mathcal C_2$. Furthermore, let $L^\top$ be the tangential component of $L$ to $\Sigma_{R,\bar f}$
\[ L^{\top}=L-\langle  \breve e_3, L\rangle \breve e_3 + \langle  \breve e_4, L\rangle \breve e_4. \]

Consider the vector field $Z_2= \pi_2((L)^{\mathcal C_2} , \cdot )$ on $\mathcal C_2$ where $\pi_2$ 
is the conjugate momentum. We claim that 
\[ div Z_2= O(R^{-3}).\]
The conjugate momentum is divergence free since the spacetime is vacuum. Moreover, $L$ is Killing and thus
\[ (\mathcal L_{ L^{\mathcal C_2}} g)_{ij}= (\mathcal L_{\langle  \breve e_3, L\rangle \breve e_3} g)_{ij}=O(R^{-2}). \]
Also, it is clear that 
\[ \pi_2 = O(R^{-1}). \]
Hence
\begin{equation}\label{error1}
div Z_2= O(R^{-3}).    
\end{equation}

Using the divergence theorem for the vector field $V$, we have
\[ \int_{\widehat \Sigma_{R,\bar f}} \pi_2( L^{\mathcal C_2}, \hat n)  d \widehat \Sigma_{R, \bar f}  = \int_{ \Sigma_{R,\bar f}} \pi_2( L^{\mathcal C_2},  n) d  \Sigma_{R,\bar f} -\int_{\Omega_2} div Z_2  \]
where  $\Omega_2$ is the portion of $\mathcal C_2$ between $\widehat \Sigma_{\bar f}$ and $\Sigma_{\bar f}$.  Hence,
\[ \int_{\widehat \Sigma_{R,\bar f}} \pi_2(L^{\mathcal C_2}, \hat n) d \widehat \Sigma_{R, \bar f}   = \int_{ \Sigma_{R,\bar f}} \pi_2(L^{\mathcal C_2},  n) d  \Sigma_{R,\bar f} +o(1) . \]
However, $\widehat \Sigma_{R,\bar f}$ lies on the hypersurface $s=0$ in $\R^{3,1}$. As a result,
we have
\[   \int_{\widehat \Sigma_{R, \bar f }} \pi_2(L^{\mathcal C_2}, \hat n)  d \widehat \Sigma_{R, \bar f} =0.\]
By definition, we have
\[
\begin{split} 
\int_{\Sigma_{R,\bar f}} \pi_2(L^{\mathcal C_2}, n) d \Sigma_{R,\bar f} =& \int_{\Sigma_{R,\bar f}}(\langle L ,\breve e_4 \rangle \langle H_{\bar f}, \breve e_3 \rangle - \langle \nabla_{L^\top} \breve e_3,\breve e_4  \rangle ) d\Sigma_{R,\bar f}  \\
=& E_1( \Sigma_{R,f},X_R,T_0,L).
\end{split}
\]
This proves equation \eqref{claim1}.

For equation \eqref{claim2}, let $\pi_1$ be the conjugate momentum of $\mathcal C_1$. Identifying the coordinate $(t,r, \theta, \phi)$ in the Kerr spacetime and the spherical coordinate  $(s,\rho,u^a)$ in the Minkowski spacetime gives a diffeomorphism between exterior regions. We use this diffeomorphism to pull back the Killing vector field $L$ to Kerr spacetime and denote the pull back by 
$L'$.  $L'$ is an asymptotic Killing vector field. Consider the vector field $Z_1=\pi(L',\cdot)$ on $\mathcal C_1$.

 Let $\Omega_1$ be the portion of $\mathcal C_1$ bounded between $ \Sigma_{R,f}$ and $\hat \Sigma_R$. We have
\[ \int_{ \Sigma_{R,f}} Z_1 \cdot n  \, d \Sigma_{R,f}  =  \int_{\widehat \Sigma_R} Z_1 \cdot n\, d \widehat \Sigma_R + \int_{\Omega_1} div Z_1. \]
However,
\[ 
\begin{split}
 div Z_1 = & \pi^{ij}_1 \langle \nabla_{\partial_i} L', \partial_j \rangle  =O(R^{-3}).
\end{split}\]
Hence,
\[  \lim_{R \to \infty }\int_{ \Sigma_{R,f}} \pi_1( L' , n)  \, d \Sigma_{R,f}=  \lim_{R \to \infty }  \int_{\widehat \Sigma_R}  \pi_1( L' , \hat n) \, d\widehat \Sigma_R \]
where $\hat n$ and $n$ are the unit normal of $\widehat \Sigma_{R}$ and $\Sigma_{R,f}$ in  $\mathcal C_1$.

For the term on the right hand side, we have
\[ \int_{\widehat \Sigma_{R}} \pi_1(L', \hat n) \, d\widehat \Sigma_R= E_2( \widehat \Sigma_{R},X_0,T_0,L). \]
To compare the left hand side with $E_2(\Sigma_{R,f},X_R,T_0,L)$, recall that 
if $L$ is decomposed as
\[ L= \langle L ,\breve e_3 \rangle \breve e_3 -  \langle L ,\breve e_4 \rangle \breve e_4 + L^\top,\]
then 
\[  E_2(  \Sigma_{R,f},X_R,T_0,L) = \int_{\Sigma_{R,f}} (\langle L, \breve e_4 \rangle \langle H_{f}, \bar e_4 \rangle - \langle \nabla_{L^T} \bar e_3, \bar e_4 \rangle)   \, d \Sigma_{R,f} . \]
where $\bar e_3$ and $\bar e_4$ is the canonical frame of the normal bundle of $\Sigma_{R,f}$ determined by
\[  \langle H_f, \bar e_4 \rangle = \langle H_{\bar f}, \breve e_4 \rangle.   \]
On the other hand,
\[   \int_{\Sigma_{R,f}}  \pi_1(L',  n) \,   \, d \Sigma_{R,f}=  \int_{\Sigma_{R,f}} (\langle L', e_4 \rangle \langle H_{f},  e_4 \rangle - \langle \nabla_{(L')^\top}  e_3, e_4 \rangle )  \, d \Sigma_{R,f}\]
where $e_3$ is the unit normal of $\mathcal C_1$ and $e_4$ is the unit normal of $\Sigma_{R,f}$ in $\mathcal C_1$.

One has to compare the  vector fields $L$ to $L'$. We know that the frame $\{ e_3, e_4 \}$ where $e_3$ is the unit normal of $\mathcal C_1$ and $e_4$ is the unit normal of $\Sigma_{R,f}$  in $\mathcal C_1$ is a good approximation of the canonical frame, $\{\bar e_3, \bar e_4 \}$ and we have
\[ e_3= \bar e_3 + O(R^{-2})  \]
\[ e_4= \bar e_4 + O(R^{-2}).  \]
Indeed, for $a=0$, the equalities hold without any error. And $a>0$ introduces an error of order $O(R^{-2})$.

We have the following lemma which compare $L'$ with $L$.
\begin{lemma} \label{null_compare}
For the above coefficient, we have
\begin{align}  
 \langle L', e_4  \rangle &=  \langle L, \breve e_4  \rangle + o(R^{-1})\\
  |(L')^\top - L^\top | & =O(R^{-1}). \end{align}
\end{lemma}
\begin{proof}
We will only use the fact that the spacetime is asymptotically Schwarzschild. It suffices to consider the case when $L=\frac{\partial}{\partial v^2}$ and $L' =\frac{\partial}{\partial u^2}$.

The tangent bundle of the $\Sigma_R$ in the Kerr spacetime is spanned by 
\[
\begin{split}
 \frac{\partial}{\partial w^1} =&  \frac{\partial}{\partial u^1}+  \partial_{1} f  \frac{\partial}{\partial t}  \\
 \frac{\partial}{\partial w^2} =&  \frac{\partial}{\partial u^2} +  \partial_2 f  \frac{\partial}{\partial t}. 
\end{split}
\]
Hence, 
\[ \langle L',  \frac{\partial}{\partial w^a}  \rangle = r^2 \tilde \sigma_{2 a} + O(1)  \]

In terms of the coordinate of $\R^{3,1}$,  $ \frac{\partial}{\partial w^a}$ can be written as 
\[
\begin{split}
 \frac{\partial}{\partial w^1} =& (1+ O(R^{-2})) \frac{\partial}{\partial v^1}+ O(R^{-2})\frac{\partial}{\partial v^2}+ O(1)  \frac{\partial}{\partial \rho} +  \partial_{1} \bar f  \frac{\partial}{\partial s}  \\
 \frac{\partial}{\partial w^2} =& (1+ O(R^{-2})) \frac{\partial}{\partial v^2}+ O(R^{-2})\frac{\partial}{\partial v^1}+ O(1)  \frac{\partial}{\partial \rho} +  \partial_{2} \bar f  \frac{\partial}{\partial s}.  \\
\end{split}
\]
Hence, 
\[ \langle L,  \frac{\partial}{\partial w^a}  \rangle = r^2 \tilde \sigma_{2 a} + O(1).  \]

This proves 
\[  |(L')^\top- L^\top |  =O(R^{-1}).  \]

On the other hand, 
\[ \breve e_4 = \frac{\frac{\partial}{\partial s} - (\partial_a \bar f) \sigma^{ab} \frac{\partial}{\partial w^b}  }{\sqrt{1-|\hat \nabla \bar f|^2}}  \]
where $\hat \nabla$ is the covariant derivative of  $\widehat \Sigma_{R, \bar f}$. 
As a result,
\[  \langle L , \breve e_4  \rangle =\frac{-\partial_2 \bar f}{\sqrt{1- R^{-2}| \tilde \nabla \bar f|^2}}.\] 
On the other hand,
\[  e_4= \frac{ \frac{\partial}{\partial t} + V^a  \frac{\partial}{\partial u^a} } {\sqrt{-g_{tt} - |V|^2 - 2V^ag_{at}}} \]
where
\[ V_{a}= - g_{at} - f_a g_{tt} . \]
Hence,
\[\begin{split} 
 \langle L' ,  e_4  \rangle  = & \frac{- \partial_2 f}{\sqrt{-g_{tt} - |V|^2 - 2V^ag_{at}}}  \\
& =\frac{- \partial_2 \bar f}{\sqrt{1 - \frac{|V|^2}{-g_{tt}}}} + o(R^{-1}).
\end{split}
  \]
Since
\[  V_{a}= - g_{at} - f_a g_{tt} = -g_{at} + \bar f_a  \sqrt{-g_{tt}},\]
it is easy to see that 
\[ \frac{|V|^2}{-g_{tt}} = R^{-2}| \tilde \nabla \bar f|^2 + o(R^{-2}).  \]
Thus
\[  \langle L' ,  e_4  \rangle =\frac{-\partial_2 \bar f}{\sqrt{1- R^{-2}| \tilde \nabla \bar f|^2}} + o(R^{-1}).\] 
\[  \langle L , \breve e_4  \rangle -  \langle L' ,  e_4  \rangle =  o(R^{-1}).\]
\end{proof}
By Lemma \ref{null_compare}, we have
\[ 
\begin{split}
   & E_2(  \Sigma_{R,f},X_R,T_0,L) -  \int_{\Sigma_{R,f}}  \pi_1(L',  n) \,   \, d \Sigma_{R,f}\\
 =& \int_{\Sigma_{R,f}} (\langle L, \breve e_4 \rangle \langle H_{f}, \bar e_4 \rangle - \langle \nabla_{L^T} \bar e_3, \bar e_4 \rangle)   \, d \Sigma_{R,f} . 
    - \int_{\Sigma_{R,f}} (\langle L', e_4 \rangle \langle H_{f},  e_4 \rangle - \langle \nabla_{(L')^T}  e_3, e_4 \rangle )  \, d \Sigma_{R,f}\\
= & o(1).
\end{split}
\]
This finishes the proof of the proposition.
\end{proof}
Let $(L^\top)'$ be the tangential part of $L$ to the image of $X'_R$. Then 
\[  (L^\top)'_a  - L^\top_a =o(1)  \]
since the time function is changed by $o(1)$. Hence
\[  \sigma^{ab}[(L^\top)'_a j_b' - (L^\top)_a  j_b ] =  \sigma^{ab}[(L^\top)'_a  (j_b' - j_b)- ((L^\top)'_a - (L^\top)_a)  j_b ] =o(R^{-2}).  \]
It follows 
\[  \lim_{R \to \infty}E(\Sigma_{R,f},X'_R,T_0,L)=\lim_{R \to \infty}E(\Sigma_{R,f},X_R,T_0,L).  \]
This finishes the proof of the theorem.
\end{proof}

\section{Evolution of total angular momentum and center of mass under the Einstein equation.}

In this section, we study the evolution of the total center of mass and angular momentum of a vacuum asymptotically flat initial data set of order $1$ (see Definition \ref{order_one}) under the vacuum Einstein equation.

Assume that we have an asymptotically flat initial data $(M,g_{ij}(0),k_{ij}(0))$ satisfying the vacuum constraint equation
equation \eqref{constraint}.

We shall fix an asymptotically flat coordinate system on $M$ with respect to $(g_{ij}(0), k_{ij}(0))$ and consider a family $(g_{ij}(t), k_{ij}(t))$ that evolves according to the vacuum Einstein  evolution equation
\begin{equation} \label{evolution}
\begin{split}
\partial _t g_{ij} & =-2N k_{ij}+(\mathcal L _{\gamma} g)_{ij}\\
\partial _t k_{ij} & = -\nabla_i\nabla_jN+N\left(R_{ij} + (tr k) k_{ij} - 2 k_{il} k^l \,_j\right) + (\mathcal L _{\gamma} k)_{ij}
\end{split}
\end{equation}
where $N$ is the lapse function, $\gamma$ is the shift vector and $\mathcal L$ is the Lie derivative.

The metric on the spacetime generated by the vacuum Einstein equation is then
\[-N dt^2+2\gamma_i dt dx^i+g_{ij}dx^i dx^j.\]

Suppoe $N=1+O(r^{-1})$ and $\gamma = O(r^{-1})$, it is easy to see that $\partial _t g_{ij} =O(r^{-2})$ and $\partial _t k_{ij} =O(r^{-3})$.
 As a result, 
\begin{equation}\label{physical_expansion}  
\begin{split}
\sigma_{ab} & = r^2 \tilde \sigma_{ab}+ r \sigma_{ab}^{(1)} + \sigma_{ab}^{(0)}(t) + o(1) \\
|H| & = \frac{2}{r}+ \frac{h^{(-2)}}{r^2}+\frac{h^{(-3)}(t)}{r^3} + o(r^{-3}) \\
\alpha_H & = \frac{\alpha_H^{(-1)} }{r}+ \frac{\alpha_H^{(-2)}(t) }{r^2} + o(r^{-2}) 
\end{split}
\end{equation}
The optimal isometric embedding equation of such data was solved in  \cite{Chen-Wang-Yau1} and the solution satisfies the following asymptotic expansion:
\begin{equation}\label{embedding_expansion}
\begin{split}
X^0 & =(X^0)^{(0)} + \frac{(X^0)^{(-1)}(t)}{r} + o(r^{-1}) \\
X^i  & = r \tilde X^i + (X^i)^{(0)} + \frac{(X^i)^{(-1)}(t)}{r} + o(r^{-1}) \\
T_0 & = (a^0,a^i) + \frac{T_0^{(-1)}(t)}{r} + O(r^{-2}).
\end{split}
\end{equation}
where $ (a^0,a^i)$ points to the direction of the total energy-momentum 4-vector which is a constant in this case.

As a result, the corresponding data on the image of isometric embedding satisfies
\begin{equation}\label{reference_expansion}
\begin{split}
|H_{0}| & = \frac{2}{r}+ \frac{h_{0}^{(-2)}}{r^2}+\frac{h_{0}^{(-3)}(t)}{r^3} + o(r^{-3}) \\
\alpha_{H_0} & = \frac{\alpha_{H_{0}}^{(-1)} }{r}+ \frac{\alpha_{H_{0}}^{(-2)}(t) }{r^2} + o(r^{-2}).
\end{split}
\end{equation}
Since vacuum constraint equation is preserved by vacuum Einstein evolution equation,  by Theorem \ref{finite}, the total center of mass and total angular momentum is well defined for each $t$.
\begin{remark}
From the above discussion and equation \eqref{ADM_e} and \eqref{ADM_p}, the ADM energy momentum is conserved by the Einstein equation in this case. 
\end{remark}

We first prove the following theorem.
\begin{theorem} \label{thm_basic_variation}
Under the vacuum Einstein evolution equation \eqref{evolution} with lapse function $N=1+O(r^{-1})$ and shift vector $\gamma=\gamma^{(-1)} r^{-1}+ O(r^{-2})$, we have
\[ \partial_t   \left [  \frac{1}{8\pi} \int_{\Sigma_r}
 X^i\rho \,\,d \Sigma_r  \right  ] = \frac{p^i}{a^0} + O(r^{-1}) \] for $i=1, 2, 3$. 
  
\end{theorem}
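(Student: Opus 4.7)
The plan is to compute $\partial_t \int_{\Sigma_r} X^i \rho \, d\Sigma_r$ directly, by substituting the Einstein evolution equations and tracking only the contributions that survive at leading order as $r\to\infty$. First I would expand the integrand using \eqref{physical_expansion}--\eqref{embedding_expansion}--\eqref{reference_expansion}. This splits the integrand into a formally $O(r)$ top term $r\int_{S^2}\tilde X^i\rho^{(-2)}\,d\tilde\Omega$, which vanishes identically by the finiteness computation of Lemma \ref{reference_finite} and Lemma \ref{physical_finite} (the essence of Theorem \ref{finite}), and an $O(1)$ piece whose time derivative is what we actually need to identify.

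Next, using \eqref{evolution} and the hypotheses $N=1+O(r^{-1})$, $\gamma=\gamma^{(-1)}r^{-1}+O(r^{-2})$, together with $k_{ij}=O(r^{-2})$, I would compute the evolution of the geometric triple $(\sigma_{ab},|H|,\alpha_H)$ on the coordinate sphere $\Sigma_r$ to the required order. The momentum constraint in \eqref{constraint} governs the leading behaviour of $\partial_t\alpha_H^{(-1)}$, while the second-variation-of-area identity used in Lemma \ref{physical_finite} controls $\partial_t h^{(-2)}$. Simultaneously, the optimal embedding $(X(t),T_0(t))$ varies in response to the data; since the ADM energy-momentum $(e,p^i)=(ma^0,ma^i)$ is preserved by the vacuum flow, the leading observer $(a^0,a^i)$ is $t$-independent, and the subleading corrections $(X^i)^{(0)}$, $(X^0)^{(0)}$, $T_0^{(-1)}$ evolve through the linearization of the optimal embedding equation \eqref{linearized_optimal} driven by the time derivatives of $\sigma_{ab}^{(1)}$, $h^{(-2)}$, and $\alpha_H^{(-1)}$.

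I would then collect the $O(1)$ contributions to $\partial_t\int X^i\rho\,d\Sigma_r$. After integration by parts on $S^2$ and using that $\tilde X^i$ is an eigenfunction of $\widetilde\Delta+2$, the gradient pieces of $j_a^{(-1)}$ drop out and the remaining contribution reduces to $\tilde X^i$ paired against $\partial_t(h_0^{(-2)}-h^{(-2)})$. Invoking the momentum constraint to relate $\partial_t h^{(-2)}$ to $\widetilde{\mathrm{div}}(\alpha_H^{(-1)})$, and comparing with the identity \eqref{ADM_p} defining $p^i$, should yield $\partial_t\int X^i\rho\,d\Sigma_r=p^i/a^0+O(r^{-1})$.

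The main obstacle is the bookkeeping: several $O(1)$ terms must be shown either to cancel or to collapse under the vacuum constraints. In particular, the non-trivial cancellations hinge on (i) the linearization of the optimal embedding being orthogonal to $\tilde X^i$ in the relevant sense, so that the $t$-derivatives of $(X^i)^{(-1)}$ and $\rho^{(-3)}$ contribute only lower-order terms or cancel against each other; (ii) the vanishing on $S^2$ of integrals of pure traces, pure divergences, and odd combinations against $\tilde X^i$, exactly as exploited in Lemma \ref{reference_finite} and Lemma \ref{physical_finite}; and (iii) using the quasi-local conservation law of Section 5 to trade the physical and reference pieces against each other on the timelike cylinder generated by $\breve e_4(t)$, which provides the right framework for identifying precisely the combination $\tilde X^i\widetilde{\mathrm{div}}(\alpha_H^{(-1)})$ whose integral is $8\pi p^i$.
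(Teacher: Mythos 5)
There is a genuine gap, and it lies in where you locate the time dependence. Under the stated hypotheses ($N=1+O(r^{-1})$, $\gamma=\gamma^{(-1)}r^{-1}+O(r^{-2})$) the evolution gives $\partial_t g_{ij}=O(r^{-2})$ and $\partial_t k_{ij}=O(r^{-3})$, so the leading data $\sigma^{(1)}_{ab}$, $h^{(-2)}$, $h_0^{(-2)}$ and $\alpha_H^{(-1)}$ are all $t$-independent (this is exactly what the expansion \eqref{physical_expansion} records, and it is also why the ADM energy-momentum is conserved). Consequently your main term, ``$\tilde X^i$ paired against $\partial_t(h_0^{(-2)}-h^{(-2)})$,'' is identically zero, and a momentum-constraint relation for ``$\partial_t h^{(-2)}$'' or ``$\partial_t\alpha_H^{(-1)}$'' cannot produce $p^i/a^0$; the constraint is not an evolution equation and those coefficients do not move. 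The entire content of the theorem sits one order further down: after using Theorem \ref{finite} to discard the cross term coming from $\partial_t T_0^{(-1)}$ (this part of your reduction is fine), one must show $\frac{1}{8\pi a^0}\int_{S^2}\tilde X^i\,\partial_t\bigl(h_0^{(-3)}(t)-h^{(-3)}(t)\bigr)dS^2=\frac{p^i}{a^0}$. For the physical piece this is done by writing the mean curvature $\hat h$ of $\Sigma_r$ explicitly in spherical coordinates, differentiating in $t$ with $\partial_t g=-2Nk+\mathcal L_\gamma g$, using the momentum constraint at order $r^{-3}$ to trade $\partial_r tr_{\Sigma_r}k$ and $k_{rr}$ for $\nabla^a k_{ra}+tr_{\Sigma_r}k/r$, and then identifying $\int X^i(\nabla^a k_{ra}+tr_{\Sigma_r}k/r)\,d\Sigma_r$ with $-8\pi p^i$ through the Wang--Yau formula behind \eqref{ADM_p} (Lemma \ref{lemma_evo_2}). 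Your proposal never carries out, or even names, this computation; it also never treats the shift contribution, which is precisely where the hypothesis $\gamma_r=\gamma_r^{(-1)}r^{-1}+O(r^{-2})$ is needed: the Lie-derivative term contributes $(\tilde\Delta+2)\gamma_r^{(-1)}$ at top order, which integrates to zero against $\tilde X^i$ because $\tilde X^i\in\ker(\tilde\Delta+2)$.

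The reference piece has a parallel issue. The vanishing of $\partial_t\int_{S^2}\tilde X^i h_0^{(-3)}dS^2$ is not a formal consequence of Lemma \ref{reference_finite}: one must first show $\partial_t(|H_0|^2-\hat h_0^2)=O(r^{-5})$, so that the $\R^{3,1}$ quantity can be replaced by the mean curvature $\hat h_0$ of the embedding into $\R^3$, and then redo the Nirenberg-ansatz computation for the linearized isometric embedding driven by $T_{ab}=\partial_t\sigma^{(0)}_{ab}$ (Lemma \ref{lemma_evo_1}); your description has the linearization driven by time derivatives of $\sigma^{(1)}_{ab}$, $h^{(-2)}$, $\alpha_H^{(-1)}$, which vanish. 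Finally, the appeal in your item (iii) to the quasi-local conservation law of Section 5 is not what the paper uses here (that device enters only for the angular-momentum integrals, via a divergence argument on the cylinder $r=\mathrm{const}$ in Lemma \ref{lemma_invariance}), and it is not clear how it would produce the boost-weighted integral $\int_{\Sigma_r}X^i\rho$, since $X^i$ is not the restriction of a Killing field tangent to that cylinder. As written, the plan would evaluate the main term to zero and miss $p^i/a^0$.
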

\begin{proof}
By the expansion \eqref{physical_expansion}, \eqref{embedding_expansion}, \eqref{reference_expansion}, 
\[ \partial_t   \left [  \frac{1}{8\pi} \int_{\Sigma_r}
 X^i\rho \,\,d \Sigma_r  \right  ] =  \frac{1}{8\pi} \int_{\Sigma_r}
 X^i \partial_t \rho \,\,d \Sigma_r + O(r^{-1}). \]

By the definition of $\rho$ and the expansion \eqref{physical_expansion}, \eqref{embedding_expansion}, \eqref{reference_expansion}, 
\[  \partial_t \rho =\frac{\partial_t (|H_0|-|H|) }{a^0}- \frac{(|H_0|-|H|)\sum_{j} a^j  \partial_t T_0^{(-1)}(t)^j} {(a^0)^3r}   + O(r^{-4}). \]

As a result,
\[ \begin{split}  \partial_t   \left [  \frac{1}{8\pi} \int_{\Sigma_r}
 X^i\rho \,\,d \Sigma_r  \right  ] =  & \frac{1}{8 \pi } \int_{\Sigma_r} X^i \frac{\partial_t (|H_0|-|H|) }{a^0}  \,\,d \Sigma_r \\
  &  - \frac{\sum_{j} a^j  \partial_t T_0^{(-1)}(t)^j  }{8\pi  (a^0)^3} \int_{\Sigma_r}\frac{X^i (|H_0|-|H|)}{r}  \,\,d \Sigma_r + O(r^{-1}).
\end{split}\]

By Theorem \ref{finite},
\[  \int_{\Sigma_r}\frac{X^i (|H_0|-|H|)}{r}  \,\,d \Sigma_r  = O(r^{-1}).   \]
It follows that 
\[ \partial_t   \left [  \frac{1}{8\pi} \int_{\Sigma_r}
 X^i\rho \,\,d \Sigma_r  \right  ] =\frac{1}{8 \pi} \int_{S^2}  \frac{\tilde X^i \partial_t (h_{0}^{(-3)}(t) -h^{(-3)}(t))}{a^0}  dS^2 +O(r^{-1}).\]

Let us study the two terms separately. First, we prove the following lemma
\begin{lemma} \label{lemma_evo_1}
Under the Einstein vacuum equation with lapse function $N=1+O(r^{-1})$ and shift vector $\gamma=O(r^{-1})$,
\[ \partial_t \left [ \int_{S^2} \tilde X^i  h_{0}^{(-3)}(t)  dS^2 \right ] =0.  \]
\end{lemma}
\begin{proof}
First, we show that 
\[  \partial_t (|H_{0}|^2 -\hat h_0^2) = O(r^{-5}). \]
where $\hat h_0$ is  the mean curvature of the image of the isometric of $\sigma_{ab}$ into $\R^3$.

Let $X'$ be the isometric embedding of $\sigma_{ab}$ into $\R^3$.  $X'$ satisfies the isometric embedding equation 
\begin{equation} \label{iso_1}
 \langle dX' ,dX' \rangle_{\R^3}= \sigma. 
\end{equation}

On the other hand, $X^i$ satisfies the isometric embedding equation 
\[  \langle  dX, dX \rangle_{\R^3}  = \sigma- d X^0\otimes d X^0; \]
It follows that 
\[ 
(X')^i-X^i  =O(r^{-1})
\]
since $d X^0\otimes d X^0 = O(1)$.

Taking the derivative of equation \eqref{iso_1} with respect to $t$ and considering its leading order term, we derive that $(Y')^k = \partial_t ((X')^k)^{(-1)}$, $k=1,2,3$ are functions on $S^2$ that solve the linearized isometric embedding equation 
\[   \tilde  \nabla_a \tilde X^k \tilde \nabla _b (Y')^k + \tilde  \nabla_b \tilde X^k \tilde \nabla _a (Y')^k   = \partial_t \sigma_{ab}^{(0)}  \]
Similarly, $Y^k=\partial_t (X^k)^{(-1)}$, $k=1,2,3$  are functions on $S^2$ that solve the linearized isometric embedding equation 
\[   \tilde  \nabla_a \tilde X^k \tilde \nabla _b (Y)^k + \tilde  \nabla_b \tilde X^k \tilde \nabla _a (Y)^k   = \partial_t \sigma_{ab}^{(0)} -
\tilde \nabla_a X^{(0)}   \tilde \nabla_b( \partial_t X^{(0)}) - \tilde \nabla_b X^{(0)}   \tilde \nabla_a( \partial_t X^{(0)})  .\]
However, since $\partial_t X^{(0)} =0$,  we have $Y= Y'$ and 
\[ 
\partial_t [(X')^i-X^i ] =O(r^{-2}). 
\]
For any function $f$ on $\Sigma_r$,
\[
\begin{split}
(\partial_t  \Delta) f = & \partial_t [ \sigma^{ab}(\partial_a \partial _b -  \Gamma_{ab}^c \partial_c)] f  \\
=& (\partial_t \sigma^{ab}) (\partial_a \partial _b -  \Gamma_{ab}^c \partial_c) f  - \sigma^{ab}( \partial_t \Gamma_{ab}^c )\partial_c f \\
= & O(r^{-4} f)
\end{split}
\] As a result,
\[\begin{split}
    &\partial_t (|H_{0}|^2-\hat h_0^2)\\
=&  \partial_t [\Delta (X^i - (X')^i)\Delta (X^i+ (X')^i) ]-       2 \Delta X^0  \partial_t (\Delta X^0) \\
= & O(r^{-5}).
\end{split}\]

The mean curvature of the image surface satisfies \[  \hat h_0^2 = \sum_k(\Delta (X')^k)^2. \] Hence,
\begin{equation}\label{mean_curvature_derivative} 
\begin{split}
\hat h_0 \partial_t \hat h_0 =  & \sum_k \Delta (X')^k  (\partial_t \Delta) (X')^k + \Delta (X')^k \Delta (\partial_t (X')^k). \\
\end{split}
\end{equation}

The left hand side of \eqref{mean_curvature_derivative} is 
\[ \frac{2  \partial_t  \hat  h_0^{(-3)} }{r^4} +O(r^{-5}) . \]

Denote $T_{ab}= \partial_t \sigma_{ab}^{(0)}$.
We compute
\[
\begin{split}
(\partial_t  \Delta) (X')^k = & \partial_t [ \sigma^{ab}(\partial_a \partial _b -  \Gamma_{ab}^c \partial_c)] (X')^k  \\
=& (\partial_t \sigma^{ab}) (\partial_a \partial _b -  \Gamma_{ab}^c \partial_c) (X')^k  - \sigma^{ab}( \partial_t \Gamma_{ab}^c )\partial_c (X')^k \\
= & \frac{  \tilde \sigma^{ab}T_{ab}X^k- \tilde \sigma^{ab}( {\partial_t \Gamma_{ab}^c}^{(-2)} )\partial_c \tilde X^k}{r^3}  +o(r^{-3})
\end{split}
\]  and $ \Delta (X')^k = \frac{-2 \tilde X^k}{r} + O(r^{-2})$.
Therefore, the first term on the right hand side of \eqref{mean_curvature_derivative} is,
\[   \sum_ k \Delta (X')^k  (\partial_t \Delta) (X')^k  = \frac{-2\tilde \sigma^{ab}T_{ab}}{r^4} + O(r^{-5}).\]
The second term on the right hand side of \eqref{mean_curvature_derivative} can be expanded as
\[ \sum_k \Delta (X')^k \Delta (\partial_t (X')^k) = \frac{\sum_k -2 \tilde (X')^k \tilde \Delta (Y')^k}{r^4} + O(r^{-5})\ \] 
where $(Y')^k, k=1, 2, 3$ are functions on $S^2$ that solve  the linearized isometric embedding equation
\[ \sum_k \tilde \nabla_a \tilde X^k \tilde \nabla_b (Y')^k + \tilde \nabla_b \tilde X^k \tilde \nabla_a (Y')^k= T_{ab}.\]
Thus, from \eqref{mean_curvature_derivative}, we deduce that 
\begin{equation}
\label{dth_0} \partial_t \hat h^{(-3)}_0=-\tilde \sigma^{ab}T_{ab}-\tilde X^k \tilde \Delta  (Y')^k
\end{equation}
Following the proof of  Proposition \ref{reference_finite} with $\sigma^{(1)}_{ab}$ replaced by $T_{ab}$, we have
\[   \int_{S^2}  \tilde X^i \tilde X^k \tilde \Delta  (Y')^k  dS^2= \int_{S^2}  \tilde X^i \sigma^{ab}T_{ab} dS^2.  \]
This finishes the proof of the lemma.
\end{proof}

For the physical term $\int \tilde X^i \partial_t  h ^{(-3)} $, we prove the following lemma,   

\begin{lemma} \label{lemma_evo_2}
Under the vacuum Einstein evolution equation \eqref{evolution} with lapse function $N=1+O(r^{-1})$ and shift vector $\gamma=\frac{\gamma^{(-1)}}{r}+O(r^{-2})$,
\[  \frac{1}{8 \pi}\int_{S^2} \tilde X^i \partial_t  h^{(-3)} d S^2 = -p^i. \]
\end{lemma}
\begin{proof}

Since 
\[  \partial_t k_{ij}= -\nabla_i \nabla_j N+N(R_{ij} + tr k k_{ij} - 2 k_{il} k^l \,_j)+ (\mathcal L_{\gamma}k)_{ij}= O(r^{-3})\] and $tr_{\Sigma_r} k$ is of lower order, in studying $\partial_t |H|$,
it suffices to look at the time derivative of mean curvature $\hat h$ of $\Sigma_r$ with respect to the 3-metric $g_{ij}(t)$.

In spherical coordinate $\{r, u^a \}$, the metric is of the form
\[  g= g_{rr} dr^2 + 2 g_{ra}dr du^a  + g_{ab} du^a du^b\]
where $g_{rr}=1+O(r^{-1})$, $g_{ra} = O(1)$ and $g_{ab}= \sigma_{ab}$ is the induced metric on $\Sigma_r$. Denote $\beta _a = g_{ra}.$ The unit normal vector of the surface $\Sigma_r$ is 
\[  n = \frac{\frac{\partial}{\partial r}-\beta^a\frac{\partial}{\partial u^a}}{\sqrt{g_{rr} - |\beta|^2}}.  \]
The mean curvature of the surface $\Sigma_r$ with respect to the 3-metric $g_{ij}(t)$ is 
\[ \hat h =   \frac{\partial_r \ln \sqrt{det(\sigma_{ab})} - \nabla ^a \beta_a }{\sqrt{g_{rr} - |\beta|^2}}.   \]
As a result, 
\begin{equation}
\begin{split}
\partial_t \hat h = & \partial_t  \left [ \frac{\partial_r \ln \sqrt{det(\sigma_{ab})} - \nabla ^a \beta_a }{\sqrt{g_{rr} - |\beta|^2}} \right   ] \\
=&  \frac{\partial_t \partial_r \ln \sqrt{det(\sigma_{ab})} -\partial_t \nabla ^a \beta_a}{\sqrt{g_{rr} - |\beta|^2}} + \partial_t \frac{1}{\sqrt{g_{rr} - |\beta|^2}} [\partial_r \ln \sqrt{det(\sigma_{ab})} - \nabla ^a \beta_a ]. 
\end{split}
\end{equation}
 Let
\[ S_{ij} =  (\mathcal L_{\gamma}g)_{ij}  \] 
be the deformation tensor for the shift vector $\gamma$, then
\[  \partial_t g_{ij}= -2Nk_{ij}+S_{ij}\]
and
\begin{equation}
\partial_t \hat h = - \partial_r tr_{\Sigma_r} k +2( \nabla^a k_{ra} +\frac{k_{rr}}{r}) - ( \frac{tr_{\Sigma_r} S}{r} + \nabla^a S_{ra} +\frac{S_{rr}}{r}) +O(r^{-4}).
\end{equation}
We claim that 

\begin{align}
\label{eq_change_k} \int_{\Sigma_r}  X^i [ - \partial_r tr_{\Sigma_r} k +2( \nabla^a k_{ra} +\frac{k_{rr}}{r})] d\Sigma_r & =  -p^i +O(r^{-1})\\
\label{eq_change_s} \int_{\Sigma_r} X^i ( \frac{tr_{\Sigma_r} S}{r} + \nabla^a S_{ra} +\frac{S_{rr}}{r}) d\Sigma_r & =  O(r^{-1}).
\end{align}

To prove equation (\ref{eq_change_k}), we first relate the last term on the left hand side to the other two terms using the vacuum constraint equation. 

Let $\pi(t)$ be the conjugate momentum of $(M, g_{ij}(t), k_{ij}(t))$. That is,
\begin{equation}\label{conjugate} \pi = k -(tr_g k) g. \end{equation}
The vacuum constraint equation states that $\pi$ is divergence free. Namely, 
\[  \nabla_g^i \pi_{ij} =0. \]
In particular, we will use leading term (of $O(r^{-3})$) of  the equation
\[  \nabla_g^i \pi_{ir} =0 \] in spherical coordinates.  In the appendix, it is proved that 
\begin{equation}
\label{eq:constraint2}
\nabla_g^i \pi_{ir} =\partial_r \pi_{rr} + \nabla^a \pi_{ar} + \frac{2\pi_{rr}}{r} -\frac{g^{ab}\pi_{ab}}{r}+ O(r^{-4}).
\end{equation}
By equation \eqref{conjugate} and the vacuum constraint equation, we have 
\[  -\partial_r tr _{\Sigma_r}k -\frac{tr _{\Sigma_r}k}{r}+ \nabla^a k_{ar} + \frac{2k_{rr}}{r}= O(r^{-4}) \]
As a result, 
\[  - \partial_r tr_{\Sigma_r} k +2( \nabla^a k_{ra} +\frac{k_{rr}}{r}) =  \nabla^a k_{ar} + \frac{tr_{\Sigma_r} k}{ r}+ O(r^{-4}) \] 
We derive that
\begin{equation} 
 \int_{\Sigma_r}  X^i [ - \partial_r tr_{\Sigma_r} k +2( \nabla^a k_{ra} +\frac{k_{rr}}{r})] d\Sigma_r  = \int_{\Sigma_r}  X^i ( \frac{tr_{\Sigma_r} k}{r} + \nabla^a k_{ra} ) d\Sigma_r + O(r^{-1}).  
\end{equation}

It is proved in  \cite{Wang-Yau3} that the ADM linear momentum of an asymptotically flat manifold can be computed as limit of quasi-local energy as 
\[ \frac{1}{8 \pi} \lim_{r \to \infty} \int_{\Sigma_r} X^i  div_{\Sigma} (\alpha_H)_a d \Sigma_r. \]
Moreover, from the same paper,
\[ (\alpha_H)_a = -k_{\nu a}+ \nabla_a \frac{tr_{\Sigma_r}k }{|H|}  \text{ and } k_{\nu a} = k_{ra} + O(r^{-2}). \]
Hence, the linear momentum is 
\[  
\begin{split}
     \lim_{r \to \infty} \int_{\Sigma_r} X^i  \nabla^a (\alpha_H)_a d{\Sigma_r}
= & \lim_{r \to \infty} \int_{\Sigma_r} X^i  \nabla^a ( -k_{\nu a}+ \nabla_a \frac{tr_{\Sigma_r}k }{|H|} )d{\Sigma_r}\\
= & \lim_{r \to \infty} \int_{\Sigma_r} X^i  \nabla^a ( -k_{r a}+ \nabla_a \frac{tr_{\Sigma_r }k }{|H|} )d{\Sigma_r}\\
=& \lim_{r \to \infty} \int_{\Sigma_r}\left(- X^i  \nabla^a  k_{r a}+ X^i \Delta \frac{tr_{\Sigma_r }k }{|H|} \right)d{\Sigma_r}\\
=& \lim_{r \to \infty} \int_{\Sigma_r}\left(- X^i  \nabla^a  k_{r a}- X^i \frac{2 tr_{\Sigma_r }k }{|H|r^2} \right) d{\Sigma_r}\\
=& \lim_{r \to \infty} \int_{\Sigma_r}\left(- X^i  (\nabla^a  k_{r a} +  \frac{ tr_{\Sigma_r }k }{r})\right) d{\Sigma_r}\\
\end{split}
\]
This proves equation \eqref{eq_change_k}.  To prove equation (\ref{eq_change_s}), we first observe that 
\[ S_{ij} =  \nabla^0_i \gamma_j+\nabla^0_j \gamma_i + O(r^{-3})\]
where $\nabla^0$ is the covariant derivative with respect to the flat metric.

In spherical coordinates, we write $\gamma$ as 
\[ \gamma = \gamma_r dr + \gamma_a d u^a \]
with $\gamma_r = O(r^{-1})$ and $ \gamma_a=O(1)$.
In spherical coordinates, $S$ is given by 
\[ S_{rr}= 2 \nabla^0_r \gamma_r+ O(r^{-3}) =2\partial_r \gamma_r +O(r^{-3}) \]
\[ S_{ra}= \nabla^0_a\gamma_r +\nabla^0_r \gamma_a+O(r^{-2}) = \partial_a \gamma_r -2\frac{\gamma_a}{r }+O(r^{-2}) \]
\[ S_{ab} =\nabla^0_a\gamma_b +\nabla^0_b \gamma_a + O(r^{-1}) = \nabla^{\Sigma_r}_a \gamma_b + \nabla^{\Sigma_r}_b \gamma_a +2r \tilde \sigma_{ab}\gamma_r+ O(r^{-1}). \]
Hence,
\[
\begin{split}
    &  \frac{tr_{\Sigma} S}{r} + \nabla^a S_{ra} +\frac{S_{rr}}{r}  \\
= &  \frac{2 div_{\Sigma_r} \gamma_b +4\frac{\gamma_r}{r}}{r}+ \Delta   \gamma_r  -\frac{2 div_{\Sigma_r} \gamma_b }{r}  +\frac{2\partial_r \gamma_r}{r} +O(r^{-4})
\end{split}
 \]
As a result, if  $\gamma_r=\gamma_r^{(-1)}r^{-1}+O(r^{-2})$ then we have 
\[  \frac{tr_{\Sigma_r} S}{r} + \nabla^a S_{ra} +\frac{S_{rr}}{r} = \frac{(\tilde \Delta+2) \gamma_r^{(-1)}  }{r^{-3}}+ O(r^{-4}) \]
Integration by parts, we derive
\[   \int_{\Sigma_r} 2 X^i ( \frac{tr_{\Sigma_r} S}{r} + \nabla^a S_{ra} +\frac{S_{rr}}{r}) d\Sigma_r  = \int_{S^2} \tilde X^i (\tilde \Delta+2) \gamma_r^{(-1)} dS^2+ O(r^{-1}) = O(r^{-1}).  \]
\end{proof}

 Theorem \ref{thm_basic_variation} follows from Lemma \ref{lemma_evo_1} and Lemma \ref{lemma_evo_2}.
\end{proof}

\begin{lemma} \label{lemma_invariance}
\[  \partial_t[\int_{\Sigma_r}
[X^i \nabla_a X^j - X^j \nabla_a X^i ]\sigma^{ab} j_b\, d\Sigma_r]=O(r^{-1}).  \]
\end{lemma}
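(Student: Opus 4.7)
The plan is to estimate $\partial_t I_{ij}(t)$, where $I_{ij}(t) := \int_{\Sigma_r}[X^i\nabla_a X^j - X^j\nabla_a X^i]\sigma^{ab}j_b\,d\Sigma_r$, by distributing the $t$-derivative over the four factors of the integrand and using both the vacuum Einstein equations and the optimal isometric embedding equation $\nabla^a j_a = 0$. First I would record the consequences of the decay assumptions: with $N = 1 + O(r^{-1})$ and $\gamma = \gamma^{(-1)}/r + O(r^{-2})$, the evolution equation \eqref{evolution} gives $\partial_t g_{ij} = O(r^{-2})$ and $\partial_t k_{ij} = O(r^{-3})$. Tracing this through the iterative construction of Section 7 shows that $\sigma_{ab}^{(1)}$, $h^{(-2)}$, $\alpha_H^{(-1)}$ are $t$-independent, so $(X^i)^{(0)}$, $(X^0)^{(0)}$, and the asymptotic Killing observer $(a^0, a^i)$ are $t$-independent; the $t$-dependence of $X$ and $T_0$ appears only at order $r^{-1}$, and of $|H|$, $\alpha_H$, $|H_0|$, $\alpha_{H_0}$, $\rho$, $j_b$ only at subleading order.

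Next I would order-count each of the four pieces. Denote $A_a^{ij} := X^i\nabla_a X^j - X^j\nabla_a X^i$. Because $\partial_t \sigma^{ab} = O(r^{-4})$ and $\partial_t\,d\Sigma_r = O(1)$, the contributions $A_a^{ij}(\partial_t \sigma^{ab})j_b\,d\Sigma_r$ and $A_a^{ij}\sigma^{ab}j_b\,\partial_t d\Sigma_r$ are both $O(r^{-1})$ by inspection (using the universal orders $A_a^{ij} = O(r^2)$, $j_b = O(r^{-1})$, $d\Sigma_r = O(r^2)$). The nontrivial contributions come from $\partial_t A_a^{ij}$ and $\partial_t j_b$, each of which naively produces an $O(1)$ piece; the plan is to show these two pieces separately vanish at leading order.

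For the $\partial_t A_a^{ij}$ contribution, I would use the algebraic identity (with $Y^\alpha := \partial_t X^\alpha = O(r^{-1})$)
\[
\partial_t A_a^{ij} = \nabla_a(X^i Y^j - X^j Y^i) + 2(Y^i\nabla_a X^j - Y^j\nabla_a X^i).
\]
Integration by parts against $\sigma^{ab}j_b\,d\Sigma_r$ together with $\nabla^a j_a = 0$ reduces the divergence piece to $O(r^{-1})$. For the remaining piece, at leading order on $S^2$ one finds $(\tilde\nabla_a\tilde X^j)\partial_t (X^i)^{(-1)} - (\tilde\nabla_a\tilde X^i)\partial_t (X^j)^{(-1)}$ contracted with $\tilde\sigma^{ab}j_b^{(-1)}$, where $\partial_t (X^i)^{(-1)}$ is forced (by differentiating $\langle dX,dX\rangle = \sigma$ in $t$) to satisfy the linearized isometric embedding equation with source $\partial_t \sigma^{(0)}_{ab}$. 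For the $\partial_t j_b$ contribution, differentiating the optimal embedding equation in $t$ and commuting $\partial_t$ with $\nabla^a$ yields $\tilde\nabla^a \partial_t j_a^{(-2)} = 0$ modulo harmless remainders; the leading $O(1)$ contribution becomes
\[
\int_{S^2}(\tilde X^i\tilde\nabla_a \tilde X^j - \tilde X^j\tilde\nabla_a \tilde X^i)\tilde\sigma^{ab}\partial_t j_b^{(-2)}\,dS^2,
\]
which vanishes after integration by parts using the identity $\tilde\nabla^a(\tilde X^i\tilde\nabla_a\tilde X^j - \tilde X^j\tilde\nabla_a\tilde X^i) = 0$ (as used in Proposition \ref{condition_finite}) combined with $\tilde\nabla^a\partial_t j_a^{(-2)} = 0$.

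The main obstacle is the careful cancellation in the $\partial_t A_a^{ij}$ step: one must verify that the non-divergence piece $2(Y^i\nabla_a X^j - Y^j\nabla_a X^i)\sigma^{ab}j_b$, after substituting the expansion and invoking the linearized isometric embedding equation for $\partial_t(X^i)^{(-1)}$ together with the explicit form of $j_b^{(-1)}$ solving \eqref{linearized_optimal}, actually produces a quantity whose $S^2$-integral vanishes. The bookkeeping closely parallels the computation at the $O(r)$ level performed in the proof of Proposition \ref{condition_finite}, except now applied one order lower and to the $t$-derivatives of the expansion coefficients, so the same algebraic identities drive the cancellation.
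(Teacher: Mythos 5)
Your reduction to the single term $\int_{S^2}(\tilde X^i\tilde\nabla_a\tilde X^j-\tilde X^j\tilde\nabla_a\tilde X^i)\tilde\sigma^{ab}\,\partial_t j_b^{(-2)}\,dS^2$ matches the paper (and, incidentally, your elaborate cancellation plan for the $\partial_t A_a^{ij}$ piece is superfluous: by your own order count $\partial_t X=O(r^{-1})$ gives $\partial_t A_a^{ij}=O(1)$, so that contribution is already $O(r^{-1})$ with no cancellation needed). The genuine gap is in how you kill the remaining term. You argue it vanishes because the rotation field is divergence-free and $\tilde\nabla^a\partial_t j_a^{(-2)}=0$; but on $S^2$ two divergence-free one-forms are both co-exact, of the form $\tilde\epsilon_a^{\ b}\tilde\nabla_b f$, and their pairing is $\int\tilde\nabla f\cdot\tilde\nabla g$, which is generically nonzero. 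Integration by parts against the divergence-free rotation field only discards the \emph{gradient} parts of $\partial_t j_b$ (the $\sinh^{-1}$ term and, to the relevant order, $\partial_t\alpha_{H_0}$); it says nothing about the rest. Indeed the whole content of Proposition \ref{condition_finite} and Theorem \ref{finite} is that the analogous pairing one order up, with $j_b^{(-1)}$ (which also satisfies $\tilde\nabla^a j_a^{(-1)}=0$), does \emph{not} vanish automatically but only because the vacuum constraint forces $\int\tilde X^i\rho^{(-2)}=0$ and $\int\tilde X^i\tilde\epsilon^{ab}\tilde\nabla_b(\alpha_H^{(-1)})_a=0$. (Also, $\tilde\nabla^a\partial_t j_a^{(-2)}=0$ itself is not immediate: differentiating $div_\sigma j=0$ in $t$ at order $r^{-4}$ produces extra terms from $\partial_t\sigma^{(0)}$ hitting $j^{(-1)}$.)

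What actually closes the argument in the paper, and is missing from your proposal, is a term-by-term treatment of the non-gradient parts of $\partial_t j_b$. For $\partial_t(\rho\nabla_b\tau)$ one needs two nontrivial inputs: $\int_{S^2}\tilde X^i\rho^{(-2)}\,dS^2=0$ (Theorem \ref{finite}) and $\frac{1}{8\pi}\int_{S^2}\tilde X^i\partial_t\rho^{(-3)}\,dS^2=p^i/a^0$ (Theorem \ref{thm_basic_variation}); the contribution then cancels only because $p^i=ma^i$, so the antisymmetrized combination $\tilde X^ja^i-\tilde X^ia^j$ pairs to $a^ip^j-a^jp^i=0$. For $\partial_t(\alpha_H)_b$, which up to a gradient is $-\partial_t k_{\nu b}$, the resulting quantity is essentially the time derivative of the ADM angular momentum surface integral, and no amount of intrinsic $S^2$ integration by parts controls it: the paper proves its $t$-invariance by a spacetime conservation law, applying the divergence theorem to $Z=\pi(K',\cdot)$ on the timelike cylinder $r=\mathrm{const}$ (using that the conjugate momentum is divergence-free in vacuum) and then estimating the bulk term $\int_{\Omega_c}div_\gamma Z$, which requires the explicit computation $div_\gamma Z=\frac{1}{2R}K'\bigl(\ln\sqrt{\det\sigma/(R^2\det\tilde\sigma)}\bigr)+O(R^{-3})$ and a further integration by parts. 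None of these steps follows from the linearized embedding equation or the divergence-free identities you invoke, so as written the proof does not go through.
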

\begin{proof}
It is easy to see that $j_b$  is of the order $O(r^{-1})$ and $\partial_t j_b$ is of order $O(r^{-2})$. Moreover, 
\begin{equation}
\begin{split}
X^i \nabla_a X^j - X^j \nabla_a X^i =& r^2(\tilde X^i  \tilde{\nabla}_a \tilde{X}^j  -\tilde X^j \tilde{\nabla}_a  \tilde{X}^i )+O(r)\\
\partial_t(X^i \nabla_a X^j - X^j \nabla_a X^i )=&O(1).
\end{split}
\end{equation}
Using the above expansion, we derive
\[  \partial_t[\int_{\Sigma_r}
[X^i \nabla_a X^j - X^j \nabla_a X^i ]\sigma^{ab} j_b\, d\Sigma_r]=\int_{S^2} [\tilde X^i \tilde \nabla_a \tilde X^j - \tilde X^j \tilde \nabla_a \tilde X^i ]\tilde \sigma^{ab} \partial_t j_b^{(-2)} dS^2+ O(r^{-1}).  \]

Let's look at the four terms in $ j_b= -\rho{\nabla_b \tau }+ \nabla_b[ \sinh^{-1} (\frac{\rho\Delta \tau }{|H||H_0|})]-(\alpha_H)_b + (\alpha_{H_0})_b $ individually. Among the four terms, $ \nabla_b[ \sinh^{-1} (\frac{f\Delta \tau }{|H||H_0|})]$ is a closed one-form . As a result, its integration against $\tilde X^i  \tilde{\nabla}_a \tilde{X}^j  -\tilde X^j \tilde{\nabla}_a  \tilde{X}^i $ vanish. Moreover, while $(\alpha_{H_0}) $ is not closed, by the expansion of  $(\alpha_{H_0})_b $ in  Lemma 5 of \cite{Chen-Wang-Yau1} and the expansion of data in equation \eqref{embedding_expansion} and \eqref{reference_expansion},
\[  \partial_t (\alpha_{H_0})_b  = \frac{1}{2r^2} \tilde \nabla_b [ (\tilde \Delta +2)\partial_t (X^0)^{(-1)}]+O(r^{-3}). \]
Thus, $ (\alpha_{H_0})_b$ has no contribution either.

For the first term, we derive
\[  \partial_t (\rho \nabla_b  \tau) = -r^{-2} \sum_k [ a^k (\partial_t \rho^{(-3)}) \tilde \nabla_b  \tilde X^k +   \rho^{(-2)} \tilde \nabla_b \partial_t  (T_0^{(-2)})^k \tilde X^k  ]+O(r^{-3})\]
and 
\[  
\begin{split}
 & \int_{S^2} [\tilde X^i \tilde \nabla_a \tilde X^j - \tilde X^j \tilde \nabla_a \tilde X^i ]\tilde \sigma^{ab}\sum_k [ a^k (\partial_t \rho^{(-3)}) \tilde \nabla_b  \tilde X^k -   \rho^{(-2)} \tilde \nabla_b \partial_t  (T_0^{(-2)})^k \tilde X^k  ] dS^2\\
= & \int_{S^2}(\tilde X^j  a^i -\tilde X^i  a^j) (\partial_t \rho^{(-3)}) dS^2 + \int_{S^2}[\tilde X^j  \partial_t  (T_0^{(-2)})^i-\tilde X^i  \partial_t  (T_0^{(-2)})^j]\rho^{(-2)} dS^2
\end{split}
\]
By Theorem \ref{finite}, we have
\[ \int_{S^2} \tilde X^i \rho^{(-2)} dS^2= 0  \]
By Theorem 9.2, we have
\[[\frac{1}{8\pi} \int_{S^2} \tilde X^i  \partial_t\rho^{(-3)}  \,\,d S^2]=\frac{p^i}{a^0}. \]
It follows that  $ (\rho \nabla_b  \tau) $ has no contribution either.

Finally, 
\[\begin{split}  
   & \partial_t \int_{\Sigma_r} ( X^i \nabla_a   X^j  - X^j \nabla_a   X^i)   \sigma^{ab}   (\alpha_H)_b d \Sigma_r\\
= & - \partial_t \int_{\Sigma_r} ( X^i \nabla_a   X^j  - X^j \nabla_a   X^i)   \sigma^{ab} k_{\nu b} d \Sigma_r + O(r^{-1})
\end{split}
\]
since $(\alpha_H)_b$ and $-k_{\nu b}$ differ by a gradient vector filed.

Consider the timelike cylinder $\mathcal C$ defined by $r=c$ in the solution of the Einstein equation.  Let $\gamma$ be the induced metric on $\mathcal C$ and  $\pi$ be the conjugate momentum of $\mathcal C$. The cylinder $\mathcal C$ is foliated by the level set of $t$. On each level set of $t$, we have the tangential vector field $X^i \nabla_a X^j  - X^j   \nabla_a X^i$. This gives a vector field $K'$ on $\mathcal C$. 

We will use the divergence theorem to prove the conservation we need. Let us consider the vector field $Z=\pi(K',\cdot)$ on $\mathcal C$. Let $\Omega_c$ be the portion of $\mathcal C$ bounded between $t=0$ and $t=c$. Denote the surface $\mathcal{C}\cap \{t=t_0\}$ by $\Sigma_{t_0}$. By the divergence theorem, we have
\[ \int_{\Sigma_c} Z \cdot n d \Sigma_c=  \int_{ \Sigma_0} Z \cdot n  d\Sigma_0+ \int_{\Omega_c} div_{\gamma} Z. \]
However,
\[ 
\begin{split}
\int_{\Sigma_c} Z \cdot n=& \int_{t=c} \pi(K', \frac{\partial}{\partial t})  d \Sigma_c\\
= & \int_{\Sigma_c} \langle \nabla_{K'} \frac{\partial}{\partial t} , \nu  \rangle  d \Sigma_c \\
= & -\int_{\Sigma_c} (K')_a \sigma^{ab}k_{\nu b} d \Sigma_c.
\end{split}
 \]
Similarly,
\[ \int_{\Sigma_0} Z \cdot n = -\int_{t=0} (K')_a \sigma^{ab}k_{\nu b}d\Sigma_0. \]
As a result, to show that $\lim_{R \to \infty} \int_{t=c, r=R} (K')_a \sigma^{ab}k_{\nu b} $ is independent of $c$, it suffices to show that 
\[  \lim_{R \to \infty }\int_{\Omega_c} div_{\gamma} Z  =0 .\]
We have
\[ 
\begin{split}
 div_{\gamma} Z = & \pi_{ij} \langle \nabla_{\partial_i} K', \partial_j \rangle  \\
=& \pi_{ij} \langle \nabla_{K'}  \partial_i , \partial_j \rangle + O(R^{-3})\\
=& \frac{1}{2R} \sigma^{ab} K'(\sigma_{ab})+ O(R^{-3})\\
=& \frac{1}{2R} K'( \ln \sqrt{det \sigma}) + O(R^{-3})\\
=& \frac{1}{2R} K'( \ln \sqrt{\frac{det \sigma}{R^2\det \tilde \sigma}} ) + O(R^{-3})\\
\end{split}
\]
As a result, 
\[
    \begin{split} 
     \int_{\Sigma_{c'}} div_{\gamma} Z    =  & \int_{\Sigma_{c'}} \frac{1}{2R} K'( \ln \sqrt{\frac{det \sigma}{R^2\det \tilde \sigma}})   d \Sigma_{c'}+O(R^{-1}) \\
=  & \frac{1}{2R} \int_{\Sigma_{c'}} (-div_{\sigma}K') \ln \sqrt{\frac{det \sigma}{R^2\det \tilde \sigma}}d\Sigma_{c'} +O(R^{-1})
\end{split}
\]
Moreover, $div_{\sigma}K'=O(R^{-1})$ and $ \ln \sqrt{\frac{det \sigma}{R^2\det \tilde \sigma}} =1 +O(R^{-1}) $.
It follows that 
\[  \lim_{R \to \infty }\int_{\Omega_c} div_{\gamma} Z  =0. \]
\end{proof}

We are now ready to prove the evolution  of total angular momentum and center of mass under the vacuum Einstein evolution equation.

\begin{theorem}\label{thm_variation_center}
Suppose $(M, g, k)$ is anasymptotically flat  initial data of order $1$ satisfying the vacuum constraint equation. Let $(M, g(t), k(t) )$ be the solution to the initial value problem $g(0)=g$ and $k(0)=k$ for the vacuum Einstein evolution equation with lapse function $N=1+O(r^{-1})$ and shift vector $\gamma=\gamma^{(-1)}r^{-1}+O(r^{-2})$. The total center of mass $C^i(t)$ and total angular momentum $J_i(t)$ of $(M, g(t), k(t))$ satisfy
\[\begin{split}
\partial_t C^i (t) &=\frac{p^i}{e},\\
\partial_t  J_i (t) &=0, \text{ for }
\end{split}
\]
for $i=1,2,3$ where $(e,p^i)$ is the ADM energy momentum of $(M, g, k )$.
\end{theorem}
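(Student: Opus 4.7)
The plan is to reduce the theorem directly to Theorem \ref{thm_basic_variation} and Lemma \ref{lemma_invariance}, which compute the time derivatives of the scalar $\int X^i \rho$ and tangential $\int[X^i\nabla_a X^j - X^j\nabla_a X^i]j^a$ parts of the quasi-local conserved quantity. First I would use the conservation of the ADM energy-momentum $(e, p^i)$ under this evolution (noted in the remark before Theorem \ref{thm_basic_variation}) to conclude that the limiting observer $(a^0, a^i) = (e/m, p^i/m)$ is $t$-independent, so the Lorentz transformations $A(r)$ appearing in the definitions of $C^i$ and $J_i$ converge as $r \to \infty$ to a fixed $t$-independent $A_\infty$. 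This lets $\partial_t$ commute with the Lorentz pairing in the $r\to\infty$ limit.

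Writing the quasi-local conserved quantity in the antisymmetric tensor form $\Phi^{\alpha\gamma}$ of Section 4, each of $C^i(t)$ and $J_i(t)$ becomes a fixed (determined by $A_\infty$) linear combination of scalar integrals $\int X^\alpha \rho$ and tangential integrals $\int[X^\alpha\nabla_a X^\gamma - X^\gamma\nabla_a X^\alpha]j^a$. For the center of mass, I would apply Theorem \ref{thm_basic_variation} to each $\int X^i\rho$ piece; the sum with the $A_\infty$ weighting produces $p^i/a^0$ at leading order, and after the $1/m$ normalization and using $e = ma^0$ this gives the stated $p^i/e$. For the angular momentum the rotation Killing field satisfies $\langle K, T_0\rangle_\infty = 0$, killing the scalar contribution entirely, and what remains is the spatial $[X^j\nabla_a X^k - X^k\nabla_a X^j]j^a$ integral whose time derivative is $O(r^{-1})$ by Lemma \ref{lemma_invariance}, hence vanishes in the limit.

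The step not immediately covered by the stated lemmas, and what I expect to be the main obstacle, is the mixed-index tangential integrals
\[
\int_{\Sigma_r}[X^i\nabla_a X^0 - X^0\nabla_a X^i]\sigma^{ab}j_b\, d\Sigma_r
\]
appearing in the center-of-mass computation. The scaling differs from Lemma \ref{lemma_invariance} since $X^0 = O(1)$ rather than $O(r)$, but the argument there should adapt: one splits $j_b$ into its four pieces, uses that the $\sinh^{-1}$ and (up to lower order) $\alpha_{H_0}$ contributions are closed one-forms, and for the remaining $\alpha_H$ and $\rho\nabla\tau$ pieces applies the divergence theorem on the timelike cylinder $\{r=R\}$ using the vacuum constraint together with the fact that $(X^0)^{(0)}$ and $(a^0, a^i)$ are $t$-independent. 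Tracking the orders carefully with the mixed $0i$ antisymmetrization should yield the same $O(r^{-1})$ bound on the time derivative, and the theorem then assembles by combining the $A_\infty$-weighted contributions.
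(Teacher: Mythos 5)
Your overall strategy is the paper's: use conservation of the ADM energy--momentum to fix the leading observer, apply Theorem \ref{thm_basic_variation} to the $\int_{\Sigma_r} X^i\rho$ piece and Lemma \ref{lemma_invariance} to the spatial antisymmetric tangential piece, and argue the remaining terms do not contribute. But there is a genuine gap at the step where you say that, since $A(r)$ converges to a $t$-independent $A_\infty$, ``$\partial_t$ commutes with the Lorentz pairing in the $r\to\infty$ limit'' and $C^i(t)$, $J_i(t)$ become fixed $A_\infty$-weighted combinations. The quantities are limits of integrals at finite $r$, where $A(r)=A^{(0)}+\frac{A^{(-1)}(t)}{r}+o(r^{-1})$ has a $t$-dependent subleading part; this part multiplies the $O(r^2)$ components of the transplanted Killing field (e.g.\ $X^iA^{(-1)}(t)_{0j}\nabla_aX^j$), and after pairing with $j^a$ and integrating it produces an $O(1)$, a priori $t$-dependent, contribution that does not disappear merely because $A(r)\to A^{(0)}$. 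The paper must, and does, kill exactly this term: it isolates $\frac{1}{r}\,X^iA^{(-1)}(t)_{0j}\nabla_aX^j\,j^a$, whose limit is $A^{(-1)}(t)_{0j}\int_{S^2}(\tilde X^i\tilde\nabla_a\tilde X^j)\tilde\sigma^{ab}j^{(-1)}_b$, and shows this integral vanishes using $\tilde\nabla^aj^{(-1)}_a=0$ together with the vanishing conditions of Theorem \ref{finite} (valid at every $t$ since the vacuum constraint is preserved). Without an argument of this kind your reduction to an $A_\infty$-weighted combination is not justified; the ingredients to fix it are in the paper, but the proposal as written assumes away precisely the term that requires them.

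Conversely, the step you flag as the main obstacle --- the mixed integrals $\int_{\Sigma_r}[X^i\nabla_aX^0-X^0\nabla_aX^i]\sigma^{ab}j_b\,d\Sigma_r$ --- needs no analogue of the cylinder conservation-law argument. Since $X^0=(X^0)^{(0)}+O(r^{-1})$, these terms are only $O(r)$ (not $O(r^2)$), and their leading coefficients, as well as the leading term $j^{(-1)}_b$ of $j_b$, are independent of $t$ by the expansions \eqref{embedding_expansion} and \eqref{reference_expansion}; hence the $t$-derivative of their integral against $j^a$ is $O(r^{-1})$ by direct order counting, which is how the paper disposes of them in one line. Your proposed adaptation of Lemma \ref{lemma_invariance} to a boost-type field is therefore unnecessary, and as sketched it is not established (the divergence-theorem argument there relies on the rotation field being tangential to the level sets of $t$ on the cylinder with the specific $O(R^{-3})$ divergence estimate, which you would have to redo for the boost combination).
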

\begin{proof}
Recall that the center of mass is 
\[C^i=\frac{1}{8\pi m} \lim_{r \to \infty }E(\Sigma_r,X_r,T_0(r),A(r)(X^i\frac{\partial}{\partial X^0}+X^0\frac{\partial}{\partial X^i}) )\]
where 
\begin{equation}
\begin{split}
X^0 & =(X^0)^{(0)} + \frac{(X^0)^{(-1)}(t)}{r} + o(r^{-1}) \\
X^i  & = r \tilde X^i + (X^i)^{(0)} + \frac{(X^i)^{(-1)}(t)}{r} + o(r^{-1}) \\
T_0 & = (a^0,a^i) + \frac{T_0^{(-1)}(t)}{r} + O(r^{-2})
\end{split}
\end{equation}
and $A(r)$ is a family of Lorentz transformation such that $ T_0({r})=A(r)(\frac{\partial}{\partial X^0})$. Let
\[  A(r) = A^{(0)} + \frac{A^{(-1)}(t)}{r}+o(r^{-1}). \]
We have
\[E(\Sigma_r,X_r,T_0(r),A(r)(X^i\frac{\partial}{\partial X^0}+X^0\frac{\partial}{\partial X^i}) ) =\frac{1}{8\pi }\int_{\Sigma_r}X^i\rho -[A(r)(X^i\frac{\partial}{\partial X^0}+X^0\frac{\partial}{\partial X^i})]^{\top}_a j^a  d\Sigma_r. \] 
By Theorem \ref{thm_basic_variation}, we have
\[ \frac{1}{8\pi m} \partial_t  [\lim_{r \to \infty }\int_{\Sigma_r}X^i\rho  d\Sigma_r ]= \frac{p^i}{e}.  \]
Let us compute $[A(r)(X^i\frac{\partial}{\partial X^0}+X^0\frac{\partial}{\partial X^i})]^{\top}_a$.
\[ 
\begin{split}
   &  [A(r)(X^i\frac{\partial}{\partial X^0}+X^0\frac{\partial}{\partial X^i})]^{\top}_a \\
 = & \langle A(r)(X^i\frac{\partial}{\partial X^0}+X^0\frac{\partial}{\partial X^i}) , \nabla_a X^\alpha \frac{\partial}{\partial X^{\alpha}}  \rangle \\
= & X^i A(r)_{0 \alpha}\nabla_a X^{\alpha} + X^0 A(r)_{i \alpha} \nabla _a X^{\alpha}\\
=& X^i  A^{(0)}_{0 j}\nabla_a X^{j} + X^0 A^{(0)}_{ij} \nabla _a X^{j} + X^i  A^{(0)}_{00}\nabla_a X^{0} + \frac{1}{r}(X^i  A^{(-1)}(t)_{0 j}\nabla_a X^{j} )+o(r).
\end{split}
\]
The first term, $ X^i  A^{(0)}_{0 j}\nabla_a X^{j}$, is of order $r^2$. The other terms are of order $r$. 
We have 
\[  X^i  A^{(0)}_{0 j}\nabla_a X^{j}= \frac{A^{(0)}_{0 j}}{2}[\nabla_a  (X^i X^j) + X^i\nabla_a X^{j} - X^j\nabla_a X^{i} ]. \]
Since $\nabla^a j_a=0$, we have
\[  \partial_t \int_{\Sigma_r} X^i  A^{(0)}_{0 j}\nabla_a X^{j} j^a d \Sigma_r  =   \frac{A^{(0)}_{0 j}}{2}   \partial_t \int_{\Sigma_r} [( X^i\nabla_a X^{j} - X^j\nabla_a X^{i}) j^a ] d \Sigma_r.  \]
By Lemma \ref{lemma_invariance},
\[ \partial_t \int_{\Sigma_r} [( X^i\nabla_a X^{j} - X^j\nabla_a X^{i}) j^a ] d \Sigma_r = O(r^{-1}).  \]

For the second and third term, we have 
\[  \partial_t \int_{\Sigma_r}(X^0 A^{(0)}_{ij} \nabla _a X^{j} + X^i  A^{(0)}_{00}\nabla_a X^{0} )j^a = O(r^{-1}),  \]
since the leading term of $X^0 A^{(0)}_{ij} \nabla _a X^{j} + X^i  A^{(0)}_{00}\nabla_a X^{0} $ and $j^a$ are both independent of $t$. Finally,
\[  \int_{\Sigma_r} \frac{1}{r}(X^i  A^{(-1)}(t)_{0 j}\nabla_a X^{j} ) j^a d\Sigma_r =  A^{(-1)}(t)_{0 j} \int_{S^2} (\tilde X^i  \tilde \nabla_a \tilde X^j )\tilde \sigma^{ab} j^{(-1)}_b + O(r^{-1}).  \]
However, 
\[  \int_{S^2} (\tilde X^i  \tilde \nabla_a \tilde X^j )\tilde \sigma^{ab} j^{(-1)}_b =0. \]
due to $\tilde \nabla^a  j^{(-1)}_a=0$ and  Theorem \ref{finite}.

It follows that 
\[  \partial_t C^i =\frac{p^i}{e}.\]

For the total angular momentum, 
\[ E(\Sigma_r,X_r,T_0(r),A(r)(X^i\frac{\partial}{\partial X^j}-X^j\frac{\partial}{\partial X^i}) ) =  \frac{-1}{8\pi }\int_{\Sigma_r}[A(r)(X^i\frac{\partial}{\partial X^j}-X^j\frac{\partial}{\partial X^i})]^{\top}_a j^a  d\Sigma_r. \]
We compute
\[ 
\begin{split}
    &  A(r)(X^i\frac{\partial}{\partial X^j}-X^j\frac{\partial}{\partial X^i})]^{\top}_a \\
=& (X^i  A^{(0)}_{jk}\nabla_a X^{k} - X^j A^{(0)}_{ik}\nabla_a X^{k}) + X^i  A^{(0)}_{j0}\nabla_a X^{0} - X^j A^{(0)}_{i0}\nabla_a X^{0}  \\
    &+ \frac{1}{r}(X^i  A^{(-1)}_{jk}(t)\nabla_a X^{k} - X^j A^{(-1)}_{ik}(t)\nabla_a X^{k})+o(r),
\end{split}
\]
where $ (X^i  A^{(0)}_{jk}\nabla_a X^{k} - X^j A^{(0)}_{ik}\nabla_a X^{k}) $ is of order $r^2$ and the other terms are $O(r)$. We have
\[  \begin{split} & (X^i  A^{(0)}_{jk}\nabla_a X^{k} - X^j A^{(0)}_{ik}\nabla_a X^{k})  \\
= &\frac{A^{(0)}_{jk}}{2} [\nabla_a (X^iX^k)  + X^i \nabla_a X^{k}-X^k \nabla_a X^{i} ]- \frac{A^{(0)}_{ik}}{2} [ \nabla_a (X^jX^k)  + X^j \nabla_a X^{k}-X^k \nabla_a X^{j} ].
\end{split} \]
Then we use $\nabla^a j_a=0$ and apply Lemma \ref{lemma_invariance} and conclude that 
\[ \partial_t \int_{\Sigma_r}  (X^i  A^{(0)}_{jk}\nabla_a X^{k} - X^j A^{(0)}_{ik}\nabla_a X^{k})  j^a d\Sigma_r  = O(r^{-1}).  \]

The leading term of $X^i  A^{(0)}_{j0}\nabla_a X^{0} - X^j A^{(0)}_{i0}\nabla_a X^{0} $ is independent of $t$ and thus 
\[ \partial_t  \int_{\Sigma_r }(X^i  A^{(0)}_{j0}\nabla_a X^{0} - X^j A^{(0)}_{i0}\nabla_a X^{0} )j^a d\Sigma_r =O(r^{-1}). \]
Finally, for $\frac{1}{r}(X^i  A^{(-1)}_{jk}(t)\nabla_a X^{k} - X^j A^{(-1)}_{ik}(t)\nabla_a X^{k})$, we use $\tilde \nabla^a  j^{(-1)}_a=0$ and Theorem \ref{finite} to conclude that 
\[  \partial_t \int_{\Sigma_r} \frac{1}{r}(X^i  A^{(-1)}_{jk}(t)\nabla_a X^{k} - X^j A^{(-1)}_{ik}(t)\nabla_a X^{k})  j^a d\Sigma_r  = O(r^{-1})  .\]
It follows that 
\[ \partial_t J_i =0. \]
\end{proof}

\section{Dynamical formula with weaker asymptotically flat condition}
In this section, we prove the dynamical formula of total center of mass and angular momentum for vacuum initial data sets  $(M,g,k)$ satisfying 
\begin{equation}
\begin{split}
g=&\delta+O(r^{-1})\\
k=&O(r^{-2}).
\end{split}
\end{equation}
Assume further that for $r$ large, the optimal embedding equation on the coordinate spheres $\Sigma_r$ has an unique solution 
$X_r= (X^0({r}), X^i({r})) $ and observers $T_0(r)$ with the following expansion
\begin{equation} \label{optimal_order}
\begin{split}
X^0({r}) & = O(1) \\
X^i ({r})& = r \tilde X^i + O(1) \\
T_0 ({r})& = (a^0,a^i)+O(r^{-1}).
\end{split}
\end{equation}
where  points to the direction of the total energy-momentum 4-vector.  

We can define the total center of mass $C^i$ and total angular momentum $J_i$ of the initial data $(M,g,k)$ using the optimal embedding $(X_r,T_0(r))$
as in Definition 3.1. However, $C^i$ and $J_i$ may be finite or infinite.  
\begin{remark}
We will address the existence and uniqueness of solution of optimal embedding equation at spatial infinity in a forthcoming paper. 
\end{remark}
\begin{theorem}  \label{cor_variation_center}
Let $(M, g, k)$ be a vacuum initial data set satisfying 
\begin{equation}
\begin{split}
g=&\delta+O(r^{-1})\\
k=&O(r^{-2}).
\end{split}
\end{equation} 

Let $(M, g(t), k(t) )$ be the solution to the initial value problem $g(0)=g$ and $k(0)=k$ for the vacuum Einstein equation with lapse function $N=1+O(r^{-1})$ and shift vector $\gamma=\gamma^{(-1)} r^{-1}+O(r^{-2})$.

Suppose for $r$ large, the optimal embedding equation on each coordinate sphere $\Sigma_{r,t}$ has an unique solution $(X(r,t),T_0(r,t))$ with expansion in equation \eqref{optimal_order} and  that the total center of mass $C^i$ and total angular momentum $J_i$ of the initial data $(M, g, k)$ are finite. Then the total center of mass $C^i(t)$ and total angular momentum $J_i(t)$ of $(M, g(t), k(t))$ satisfy
\[
\begin{split}
\partial_t C^i (t)= &  \frac{p^i}{e},\\
\partial_t J_{i} (t) = &  0
\end{split} 
\]
for $i=1, 2, 3$ where $(e,p^i)$ is the ADM energy momentum of $(M, g, k)$.\end{theorem}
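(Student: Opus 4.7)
The plan is to mimic the proof of Theorem \ref{thm_variation_center}, with two adaptations: the a priori finiteness of $C^i(0)$ and $J_i(0)$ replaces appeals to Theorem \ref{finite}, and the $O(\ln r)$ bounds from Lemma \ref{slow} replace the $O(1)$ bounds available in the order-$1$ case. The decisive observation is that although $g$ is only $O(r^{-1})$, the evolution equations with lapse $N = 1 + O(r^{-1})$ and shift $\gamma = \gamma^{(-1)} r^{-1} + O(r^{-2})$ force $\partial_t g_{ij} = O(r^{-2})$ and $\partial_t k_{ij} = O(r^{-3})$; thus the \emph{time derivatives} of the geometric data decay as in an order-$1$ initial data set, so the cancelation computations of Lemmas \ref{lemma_evo_1}, \ref{lemma_evo_2}, and \ref{lemma_invariance} can be re-run when $\partial_t$ is applied inside the integrals, even though the integrals themselves are only known to be $O(\ln r)$.

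The first step is to prove the analogue of Theorem \ref{thm_basic_variation}, namely
\[
\partial_t \!\left[\tfrac{1}{8\pi}\!\int_{\Sigma_r}\! X^i \rho \, d\Sigma_r\right] = \frac{p^i}{a^0} + o(1) \quad\text{as } r \to \infty.
\]
The derivative of $\rho$ splits as $\partial_t(|H_0|-|H|)/a^0$ plus a correction proportional to $(|H_0|-|H|)/r$ and $\partial_t T_0^{(-1)}$; Lemma \ref{slow} bounds the first moment of this correction by $O(r^{-1}\ln r) = o(1)$. The main term $\int X^i \partial_t(|H_0|-|H|)$ is then handled by replicating the calculations of Lemma \ref{lemma_evo_1} (reference side, linearized isometric embedding equation applied to $\partial_t \sigma_{ab}$) and Lemma \ref{lemma_evo_2} (physical side, vacuum constraint $\nabla^i_g \pi_{ir} = 0$ together with the ADM linear-momentum formula of \cite{Wang-Yau3}). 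Neither calculation required the order-$1$ expansion of $g$ itself; they required only the decay of $\partial_t g$ and $\partial_t k$, which we have.

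The second step is to prove the analogue of Lemma \ref{lemma_invariance}, namely $\partial_t \int_{\Sigma_r}(X^i \nabla_a X^j - X^j \nabla_a X^i)\sigma^{ab} j_b\, d\Sigma_r = o(1)$. Since I cannot Taylor-expand $j_b$ in $r^{-1}$ under the weaker decay, I would bypass the expansion and run the divergence-theorem argument directly on the timelike cylinder $\mathcal{C}$, applied to $Z = \pi(K', \cdot)$ where $K'$ is the asymptotic rotation generated by $X^i \nabla_a X^j - X^j\nabla_a X^i$. The estimates $\pi = O(r^{-2})$ and $\langle \nabla_{\partial_i} K', \partial_j\rangle = O(r^{-1})$ give $\mathrm{div}_\gamma Z = O(r^{-3})$, which is integrable on the truncated cylinder and yields the $o(1)$ conclusion. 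Combining the two steps with the expansion $A(r) = A^{(0)} + r^{-1}A^{(-1)}(t) + o(r^{-1})$ and the algebraic splitting $X^i\nabla_a X^j = \tfrac{1}{2}\nabla_a(X^iX^j) + \tfrac{1}{2}(X^i\nabla_a X^j - X^j\nabla_a X^i)$, exactly as in the proof of Theorem \ref{thm_variation_center}, produces the two dynamical formulas.

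The main obstacle is controlling the potentially unbounded ($O(\ln r)$) first moments of $|H_0|-|H|$ and of $j_b$: naively, their time derivatives could themselves grow in $r$. The resolution is that the $\ln r$ coefficient of these moments is determined by the $r^{-1}$ part of $g$ (equivalently by $\sigma^{(1)}_{ab}$), which is \emph{time-independent at leading order} because $\partial_t g = O(r^{-2})$; hence the $\ln r$ contributions are annihilated by $\partial_t$, leaving a genuine finite limit. The assumed finiteness of $C^i(0)$ and $J_i(0)$ then serves as a baseline so that $C^i(t), J_i(t)$ are themselves finite and the interchange of $\partial_t$ with $\lim_{r\to\infty}$ is justified.
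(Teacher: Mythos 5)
Your overall strategy --- rerun the proof of Theorem \ref{thm_variation_center}, replacing Theorem \ref{finite} by the assumed finiteness at $t=0$ together with the $O(\ln r)$ bounds of Lemma \ref{slow}, and exploiting that the evolution forces $\partial_t g=O(r^{-2})$, $\partial_t k=O(r^{-3})$ --- is exactly the paper's, and your first step (the analogue of Theorem \ref{thm_basic_variation}, rerunning Lemmas \ref{lemma_evo_1} and \ref{lemma_evo_2}) and the final assembly with $A(r,t)$ agree with the paper's proof. The gap is in your second step, the analogue of Lemma \ref{lemma_invariance}. You propose to ``bypass the expansion'' and control $\partial_t\int_{\Sigma_r}(X^i\nabla_aX^j-X^j\nabla_aX^i)\sigma^{ab}j_b$ by a single divergence-theorem argument for $Z=\pi(K',\cdot)$ on the physical cylinder. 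But $\int (K')^a j_a$ is not $\int \pi(K',n)$: of the four terms in $j_b$ from \eqref{optimal_embedding_vector}, only $-(\alpha_H)_b$ (equivalently $-k_{\nu b}$ up to a gradient) is seen by the physical conjugate momentum. The reference terms $(\alpha_{H_0})_b$ and the $\sinh^{-1}$ gradient must be disposed of separately (closed one-form pairing and the expansion of $\alpha_{H_0}$ from \cite{Chen-Wang-Yau1}), and, most importantly, the term $\rho\nabla_b\tau$ is neither purely physical nor purely reference: in the paper it is killed only by combining the already-proved formula $\partial_t\int X^i\rho=p^i/a^0+o(1)$ with the proportionality $a^i=p^i/m$ (so that $a^ip^j-a^jp^i=0$), plus the Lemma \ref{slow} bound on $\int X^i(|H_0|-|H|)$ multiplied by $\partial_t T_0=O(r^{-1})$. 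Your argument never addresses this term, so the required cancellation is missing.

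Even for the $k_\nu$ part your estimate is too coarse. The conjugate momentum of the cylinder is $O(R^{-1})$ in an orthonormal frame (not $O(R^{-2})$), and with the deformation tensor of $K'$ of size $O(R^{-1})$ the pointwise bound on $\operatorname{div}_\gamma Z$ is only $O(R^{-2})$; integrated over spheres of area $\sim R^2$ this gives $O(1)$, not $o(1)$. The paper's Lemma \ref{lemma_invariance} closes this by isolating the leading term $\frac{1}{2R}K'\bigl(\ln\sqrt{\det\sigma/(R^2\det\tilde\sigma)}\bigr)$ and integrating by parts on the sphere, using $\operatorname{div}_\sigma K'=O(R^{-1})$; without that extra structure the divergence-theorem step does not yield the conclusion. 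Finally, your heuristic that the $\ln r$ coefficients are ``annihilated by $\partial_t$'' is not the mechanism actually needed: in the paper every integral bounded via Lemma \ref{slow} enters multiplied by an explicit $O(r^{-1})$ factor ($\partial_t T_0$, $A-A^{(0)}$, or $1/r$), so it contributes $O(r^{-1}\ln r)=o(1)$ directly.
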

\begin{proof}
We have
$\partial _t g_{ij} =O(r^{-2})$ and $\partial _t k_{ij} =O(r^{-3})$.
 As a result, 
\begin{equation}\label{physical_expansion_2}  
\begin{split}
\sigma_{ab} & = r^2 \tilde \sigma_{ab}+O(r), \qquad   \partial_t \sigma_{ab} =O(1) \\
|H| & = \frac{2}{r}+O(r^{-2}), \qquad   \partial_t |H| =O(r^{-3})\\
(\alpha_H)_a & =O(r^{-1}),\qquad    \partial_t (\alpha_H)_a =O(r^{-2}).
\end{split}
\end{equation}
Furthermore, by computing the $t$-derivative of the optimal embedding equation, we have
\begin{equation}\label{embedding_expansion_2}
\begin{split}
X^0 & =O(1) + O(r^{-1}), \qquad \partial_t X^0 =O(r^{-1}) \\
X^i  & = r \tilde X^i + O(1), \qquad  \partial_t X^i =O(r^{-1}) \\
T_0 & = (a^0,a^i) +O(r^{-1}), \qquad  \partial_t T_0 = O(r^{-1}).
\end{split}
\end{equation}

We first prove that 
\[ \partial_t   \left [  \frac{1}{8\pi} \int_{\Sigma_{r,t}} X^i\rho \,\,d \Sigma_{r,t}  \right  ] = \frac{p^i}{a^0} + o(1). \]
The proof is similar to that of Theorem  \ref{thm_basic_variation}. We derive
\[  \partial_t \rho =\frac{\partial_t (|H_0|-|H|) }{a^0}- \frac{(|H_0|-|H|)\sum_{j} a^j \partial_t T_0^j} {(a^0)^3r}   + O(r^{-4}) \]
which implies
\[ 
\begin{split}
  &\partial_t   \left [  \frac{1}{8\pi} \int_{\Sigma_{r,t}} X^i\rho \,\,d \Sigma_{r,t}  \right  ] \\
= & \frac{1}{8\pi} \int_{\Sigma_{r,t}} \frac{X^i\partial_t (|H_0|-|H|) }{a^0}  \,\,d \Sigma_{r,t}-  \frac{ \sum_{j} a^j \partial_t b_j(r,t)}{8\pi (a^0)^3 r} \int_{\Sigma_{r,t}} X^i (|H_0|-|H|)   \,\,d \Sigma_{r,t}\\
= & \frac{1}{8\pi} \int_{\Sigma_{r,t}} \frac{X^i\partial_t (|H_0|-|H|) }{a^0}  \,\,d \Sigma_r  + o(1).
\end{split} \]
The last equality holds due to Lemma \ref{slow}.
 
We compute the two terms separately.  Following the argument in the proof  of Lemma \ref{lemma_evo_1} with $\partial_t \sigma_{ab}^{(0)}$ and $\partial_t \frac{{\Gamma_{ab}^c}^{(-2)}}{r^2}$ replaced by $\partial_t \sigma$ and $\partial_t \Gamma_{ab}^c$, we derive
\[   \frac{1}{8\pi} \int_{\Sigma_{r,t}} \frac{X^i\partial_t |H_0|}{a^0}  \,\,d \Sigma_{r,t}  = O(r^{-1}).   \]
Similarly, following the argument  in the proof  of Lemma \ref{lemma_evo_2}, we derive
\[   \frac{1}{8\pi} \int_{ \Sigma_{r,t}} \frac{X^i\partial_t |H|}{a^0}  \,\,d \Sigma_{r,t}  = -p^i +O(r^{-1}).   \]

We also need the following 
\begin{equation}  \label{weak_angular_derivative}
 \partial_t[\int_{ \Sigma_{r,t}}
[X^i \nabla_a X^j - X^j \nabla_a X^i ]\sigma^{ab} j_b\, d \Sigma_{r,t}]=o(1).  \end{equation}
Following the proof of Lemma \ref{lemma_invariance}, we derive
\[  \partial_t[\int_{ \Sigma_{r,t}}
[X^i \nabla_a X^j - X^j \nabla_a X^i ]\sigma^{ab} j_b\, d\Sigma_r]=\int_{ \Sigma_{r,t}}
[X^i \nabla_a X^j - X^j \nabla_a X^i ]\sigma^{ab} (\partial_t  j_b)\, d \Sigma_{r,t} + O(r^{-1}).  \]
Recall that 
\[  j_b= -\rho{\nabla_b \tau }+ \nabla_b[ \sinh^{-1} (\frac{\rho\Delta \tau }{|H||H_0|})]-(\alpha_H)_b + (\alpha_{H_0})_b. \]
We can prove equation \eqref{weak_angular_derivative} using the same argument as in the proof of  Lemma \ref{lemma_invariance} with 
 $  \frac{\partial_t (X^0)^{(-1)}}{r}$  and $\frac{\partial_t T_0^{(-1)}}{r}$  replaced by $\partial_t X^0$ and $\partial_t T_0$, respectively and applying Lemma \ref{slow} instead of Theorem \ref{finite}.

Finally, we prove that 
\begin{equation} \label{weak_variation}
\begin{split}
\partial_t  E(\Sigma_{r,t}, X(r,t), T_0(r,t), A(r,t)(X^i\frac{\partial}{\partial X^0}+X^0\frac{\partial}{\partial X^i}))\ &=\frac{p^i}{e}+o(1)\\
\partial_t   E(\Sigma_{r,t}, X(r,t), T_0(r,t), A(r,t)(X^i\frac{\partial}{\partial X^j}-X^j\frac{\partial}{\partial X^i}))\ &=o(1)
\end{split}
\end{equation}
where $T_0(r,t)=A(r,t)\frac{\partial}{\partial X^0}$ for a family of Lorentz transformation $A(r,t)$. 

Equation \eqref{weak_variation} follow from the same argument used in the proof  of Theorem \ref{thm_variation_center}. Again, we need to  replace $\frac{A^{(-1)}}{r}$ by $A - A^{(0)}$ and use Lemma \ref{slow} instead of Theorem \ref{finite}. This finishes the proof of the theorem.
\end{proof}
\section{Conclusion}
The new total angular momentum and total center of mass on asymptotically flat  initial  data sets satisfy the following properties:

1. The definition only depends on the geometric data $(g, k)$ and the foliation of surfaces at infinity, and in particular does not depend on the asymptotically flat coordinate system or the existence of asymptotically Killing field on the initial data set.
 
2. The definition gives a element in the dual space of the Lie algebra of $SO(3,1)$. In particular, the same formula works for total angular momentum and total center of mass, and the only difference is to pair this element with either a rotation Killing field or a boost Killing field. 

3. It is always finite on any asymptotically flat initial data set of order 1. 

4. Both the total angular momentum and total center of mass vanish on any spacelike hypersurface in the Minkowski spacetime.

5. They satisfy conservation law. In particular, the total angular momentum on any strictly spacelike hyperurface of the Kerr spacetime is the same. 

6. Under the vacuum Einstein evolution of initial data sets, the total angular momentum is conserved and the total center of mass obeys the dynamical formula $\partial_t C^i(t)=\frac{p^i}{e}$ where $p^i$ is the ADM linear momentum and $e$ is the ADM energy.

\appendix
\section{The constraint equation}
We prove equation \eqref{eq:constraint1}  and \eqref{eq:constraint2} regarding the constraint equation here. First we have the following lemma on the connection  coefficients of an asymptotically flat metric in spherical coordinates.
\begin{lemma}
For an asymptotically flat metric of the form
\[g_{ij}= \delta_{ij} + O_1(r^{-1}) \]
the connection coefficients in spherical coordinates satisfy
\[
\begin{split}
\Gamma^a_{bc} & = \tilde \Gamma^a_{bc} + O(r^{-1})\\
\Gamma^r_{bc} & = -r \tilde \sigma_{ab} + O(1)\\
\Gamma^b_{ar} & = \frac{\delta^b_a}{r} + O(r^{-2})\\
\Gamma^b_{ar} & = O(r^{-1})\\
\Gamma^a_{rr} & = O(r^{-3})\\
\Gamma^r_{rr} & = O(r^{-2})\\
\end{split}
\]
\end{lemma}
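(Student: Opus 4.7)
The plan is to reduce the computation to the flat case: write $g_{ij} = \delta_{ij} + h_{ij}$ in Cartesian coordinates with $h_{ij} = O_1(r^{-1})$, transport this decomposition to spherical coordinates, and read off the Christoffel symbols from $\Gamma^k_{ij} = \tfrac12 g^{kl}(\partial_i g_{jl} + \partial_j g_{il} - \partial_l g_{ij})$. The flat metric has the form $\delta = dr^2 + r^2\tilde\sigma_{ab}\,du^a du^b$, whose non-zero Christoffel symbols are exactly $\tilde\Gamma^a_{bc}$, $-r\tilde\sigma_{bc}$, and $\delta^b_a/r$; these are precisely the leading-order terms claimed in the lemma, so the task reduces to estimating the contributions from $h_{ij}$.

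Using the coordinate map $x^i = r\tilde X^i(u^a)$ with Jacobian $\partial x^i/\partial r = \tilde X^i$ and $\partial x^i/\partial u^a = r\,\partial_a \tilde X^i$, I would first transport $h$ to spherical components and obtain $h^{\mathrm{sph}}_{rr} = O(r^{-1})$, $h^{\mathrm{sph}}_{ra} = O(1)$, $h^{\mathrm{sph}}_{ab} = O(r)$. From $\partial_k h_{ij} = O(r^{-2})$ and the chain rule (with $\partial_a$ picking up one factor of $r$ through the Jacobian, and $\partial_r$ preserving the decay rate), the corresponding bounds on $\partial h^{\mathrm{sph}}$ follow. A Neumann-series inversion of the full metric then yields $g^{rr} = 1 + O(r^{-1})$, $g^{ra} = O(r^{-2})$, and $g^{ab} = r^{-2}\tilde\sigma^{ab} + O(r^{-3})$.

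The final step is to substitute these expansions into the Christoffel formula and collect terms index-by-index. For instance, for $\Gamma^r_{bc}$ the dominant contribution is $-\tfrac12 g^{rr}\partial_r g^{\mathrm{sph}}_{bc} = -r\tilde\sigma_{bc} + O(1)$, while the angular-derivative pieces $g^{rr}\partial_b g^{\mathrm{sph}}_{cr}$ and the mixed block $g^{rd}\partial\cdot$ are both $O(1)$ after tracking $r$-powers; the other five symbols are handled by the same pattern, namely flat leading term plus a perturbation controlled by the appropriate product of $g^{kl}$ with $h^{\mathrm{sph}}$ or $\partial h^{\mathrm{sph}}$. The main obstacle is purely combinatorial: one must tabulate the $r$-powers coming from the three inverse-metric entries, the derivatives of $h^{\mathrm{sph}}$ (which split into radial and angular types with different decay), and the flat background, and then check that every cross-term respects the claimed decay rate. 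Missing one such factor is essentially the only way to come short of the stated estimates.
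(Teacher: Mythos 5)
Your proposal is correct and is essentially the computation the paper has in mind: its own proof of this lemma is simply ``straightforward calculations,'' and your outline (flat spherical background giving the leading terms $\tilde\Gamma^a_{bc}$, $-r\tilde\sigma_{bc}$, $\delta^b_a/r$, plus bookkeeping of the $r$-weights in the spherical components of $h$, its radial/angular derivatives, and the inverted metric $g^{rr}=1+O(r^{-1})$, $g^{ra}=O(r^{-2})$, $g^{ab}=r^{-2}\tilde\sigma^{ab}+O(r^{-3})$) is exactly that calculation, with the stated decay rates checking out index by index.
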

\begin{proof}
Straightforward calculations. 
\end{proof}
\begin{lemma}
Let $(M,g,k)$ be an asymptotically flat initial data set of the form
 \[g_{ij}= \delta_{ij} + O_1(r^{-1}), \qquad  k= O(r^{-2})\]
satisfying the vacuum constraint equation and $\pi$ be the conjugate momentum, then
\begin{equation}
\begin{split}
\nabla^i_g \pi_{ir} =&  \partial_r \pi_{rr} + \nabla^a \pi_{ar} + \frac{2\pi_{rr}}{r} -\frac{g^{ab}\pi_{ab}}{r}+O(r^{-4})\\
\nabla^i_g \pi_{ra} =&  \partial_r \pi_{ar}  - \frac{\pi_{ar}}{r} + \nabla^b_{\sigma} \pi_{ba} + \frac{3\pi _{ar}}{r}+ O(r^{-3})
\end{split}
\end{equation}
\end{lemma}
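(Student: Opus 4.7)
The strategy is purely computational: expand the covariant divergence
\[
\nabla^i_g \pi_{ij} = g^{ik}\bigl(\partial_k \pi_{ij} - \Gamma^l_{ki}\pi_{lj} - \Gamma^l_{kj}\pi_{il}\bigr)
\]
in the spherical coordinate system, split the summations over $i$ and $k$ into radial and angular parts, and feed in the Christoffel asymptotics from the preceding lemma together with the component decay rates of $\pi$.

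First I would record the component sizes in spherical coordinates. Transforming the Cartesian decay $\pi_{ij}=O(r^{-2})$ via $\partial x^i/\partial r=O(1)$ and $\partial x^i/\partial u^a=O(r)$ gives $\pi_{rr}=O(r^{-2})$, $\pi_{ra}=O(r^{-1})$, $\pi_{ab}=O(1)$, with an extra factor $r^{-1}$ per additional derivative. From $g_{rr}=1+O(r^{-1})$, $g_{ra}=O(1)$, and $g_{ab}=r^2\tilde\sigma_{ab}+O(r)$, a Schur-complement computation yields $g^{rr}=1+O(r^{-1})$, $g^{ra}=O(r^{-2})$, and $g^{ab}=\sigma^{ab}+O(r^{-3})=r^{-2}\tilde\sigma^{ab}+O(r^{-3})$, where $\sigma^{ab}$ denotes the inverse induced metric on $\Sigma_r$.

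For the first identity, the mixed-index block carries a factor $g^{ra}=O(r^{-2})$ and is therefore $O(r^{-4})$. In the radial-radial block, $g^{rr}\partial_r\pi_{rr}=\partial_r\pi_{rr}+O(r^{-4})$, and the connection pieces $g^{rr}\Gamma^l_{rr}\pi_{lr}$ etc.\ are $O(r^{-4})$ by $\Gamma^r_{rr}=O(r^{-2})$ and $\Gamma^a_{rr}=O(r^{-3})$. In the angular block, $g^{ab}\partial_b\pi_{ar}$ together with $-g^{ab}\tilde\Gamma^c_{ba}\pi_{cr}$ assembles into $\nabla^a\pi_{ar}$ modulo $O(r^{-4})$, since the $O(r^{-1})$ correction to $\Gamma^c_{ba}$ contracted against factors of size $O(r^{-2})\cdot O(r^{-1})$ is $O(r^{-4})$. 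The two surviving pieces are exactly
\[
-g^{ab}\Gamma^r_{ab}\pi_{rr} = \tfrac{2\pi_{rr}}{r}+O(r^{-4}),\qquad
-g^{ab}\Gamma^c_{br}\pi_{ac} = -\tfrac{g^{ab}\pi_{ab}}{r}+O(r^{-4}),
\]
using $\Gamma^r_{ab}=-r\tilde\sigma_{ab}+O(1)$ (so that $g^{ab}\Gamma^r_{ab}=-2/r+O(r^{-2})$) and $\Gamma^c_{br}=\delta^c_b/r+O(r^{-2})$. The second identity is analogous but with the looser error tolerance $O(r^{-3})$; the same procedure isolates $\partial_r\pi_{ar}$, $\nabla^b_\sigma\pi_{ba}$, and the two explicit $r^{-1}$ contributions $-\pi_{ar}/r$ and $3\pi_{ar}/r$ arising from the remaining tangential and radial Christoffel contractions (namely those involving $\Gamma^b_{ar}=\delta^b_a/r+O(r^{-2})$ and $\Gamma^r_{ba}=-r\tilde\sigma_{ba}+O(1)$), while everything else is absorbed into the $O(r^{-3})$ error.

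The main obstacle is not conceptual but notational bookkeeping. One has to distinguish carefully between $g^{ab}$, the angular block of the 3D inverse metric, and $\sigma^{ab}$, the inverse induced metric on $\Sigma_r$, which agree only to $O(r^{-3})$, and verify that this accuracy is enough given the decay of the trailing $\pi$ factors. One must also check that the $O(r^{-1})$ correction to the tangential Christoffel $\Gamma^c_{ab}$ always multiplies something small enough to drop into the error, so that the combination $g^{ab}(\partial_b - \Gamma^c_{ba}(\cdot)_c)$ genuinely collapses to the intrinsic covariant derivative $\nabla^a_\sigma$ at the stated order. Once these conventions are fixed, both identities reduce to a finite tabulation of which $g^{ik}\Gamma^l\pi$ terms survive at $O(r^{-3})$ and $O(r^{-2})$, respectively.
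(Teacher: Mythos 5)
Your proposal is correct and follows essentially the same route as the paper: a direct expansion of $\nabla^i_g\pi_{ij}$ in spherical coordinates, using the Christoffel asymptotics of the preceding lemma together with the component decay $\pi_{rr}=O(r^{-2})$, $\pi_{ar}=O(r^{-1})$, $\pi_{ab}=O(1)$ and the inverse-metric blocks $g^{rr}=1+O(r^{-1})$, $g^{ra}=O(r^{-2})$, $g^{ab}=r^{-2}\tilde\sigma^{ab}+O(r^{-3})$, then tabulating which contractions survive at the stated orders. The paper's own proof is exactly this "straightforward verification," and your identification of the surviving terms ($\partial_r\pi_{rr}$, $\nabla^a\pi_{ar}$, $2\pi_{rr}/r$, $-g^{ab}\pi_{ab}/r$, and correspondingly for the second identity) matches it.
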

\begin{proof}
For asymptotically flat initial data set of order $1$, we have the following expansion for the conjugate momentum.
\[ 
\begin{split}
\pi_{rr} & = O(r^{-2}) \\
\pi_{ab} & =O(1) \\
\pi_{ar} & =O(r^{-1}). 
\end{split}
\]
Moreover, we have
\[ 
\begin{split}
g^{rr} & = 1+ O(r^{-1}) \\
g^{ab} & =r^{-2} \tilde \sigma^{ab}+ O(r^{-3}) \\
g^{ar} & = O(r^{-2}).
\end{split}
\]
It is straightforward to verify that 
\[ 
\begin{split}
\nabla_g^i \pi_{ir}  
=&  g^{rr}\partial_r \pi_{rr} + g^{ab} (\partial_a \pi_{br} - \Gamma_{ab}^c \pi_{cr} - \Gamma_{ab}^r \pi_{rr} - \Gamma_{ar}^c \pi_{bc}) + O(r^{-4})\\
=&  \partial_r \pi_{rr} + \nabla^a \pi_{ar} + \frac{2\pi_{rr}}{r} -\frac{g^{ab}\pi_{ab}}{r}+O(r^{-4})
\end{split}
\]
Similarly,
\[ 
\begin{split}
\nabla_g^i \pi_{ia}  
=&  \partial_r \pi_{ar}  - \frac{\pi_{ar}}{r} + \nabla^b_{\sigma} \pi_{ba} + \frac{3\pi _{ar}}{r}+ O(r^{-3})\\
\end{split}
\]
\end{proof}
\begin{lemma}
Let $(M,g,k)$ be an asymptotically flat initial data set of order $1$ satisfying the vacuum constraint equation
and $\pi$ be the conjugate momentum.  Then
\begin{equation}
\begin{split}
\nabla^i_g \pi_{ir} =&  \frac{ \tilde \nabla^a \pi_{ar}^{(-1)}  - \tilde \sigma^{ab}\pi_{ab}^{(0)}  }{r^3}+ O(r^{-4})\\
\nabla^i_g \pi_{ra} =&  \frac{ \pi_{ar}^{(-1)}  - \tilde \nabla^{b}\pi_{ba}^{(0)}  }{r^2}+ O(r^{-3}).
\end{split}
\end{equation}
\end{lemma}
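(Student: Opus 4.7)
The plan is to extract the leading-order coefficients from the two identities established in the previous lemma of the appendix, namely
\[
\nabla_g^i \pi_{ir} = \partial_r \pi_{rr} + \nabla^a \pi_{ar} + \frac{2\pi_{rr}}{r} - \frac{g^{ab}\pi_{ab}}{r} + O(r^{-4})
\]
and
\[
\nabla_g^i \pi_{ia} = \partial_r \pi_{ar} - \frac{\pi_{ar}}{r} + \nabla^b_\sigma \pi_{ba} + \frac{3\pi_{ar}}{r} + O(r^{-3}),
\]
by substituting the spherical-coordinate expansions of the components of $\pi$.

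First I would record those expansions. Since $g_{ij}=\delta_{ij}+g^{(-1)}_{ij}/r+O(r^{-2})$ and $k_{ij}=k^{(-2)}_{ij}/r^2+O(r^{-3})$ in the asymptotic Cartesian coordinates, transforming to spherical coordinates (accounting for the factors of $r$ coming from $\partial/\partial u^a$) and using $\mathrm{tr}_g k=O(r^{-2})$ yields
\[
\pi_{rr}=\frac{\pi_{rr}^{(-2)}}{r^2}+O(r^{-3}), \quad \pi_{ar}=\frac{\pi_{ar}^{(-1)}}{r}+O(r^{-2}), \quad \pi_{ab}=\pi_{ab}^{(0)}+O(r^{-1}),
\]
together with $g^{ab}=\tilde\sigma^{ab}/r^2+O(r^{-3})$ and $\sigma_{ab}=r^2\tilde\sigma_{ab}+r\sigma^{(1)}_{ab}+O(1)$ for the induced metric on $\Sigma_r$.

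Plugging these into the first identity, the contributions $\partial_r\pi_{rr}=-2\pi_{rr}^{(-2)}/r^3$ and $2\pi_{rr}/r=2\pi_{rr}^{(-2)}/r^3$ cancel at order $r^{-3}$. The surface divergence $\nabla^a\pi_{ar}$ contributes $\tilde\nabla^a\pi_{ar}^{(-1)}/r^3$ at leading order, once one extracts the factor $\sigma^{ab}=\tilde\sigma^{ab}/r^2+\cdots$ and observes that the induced $\Sigma_r$-Christoffels agree with the unit-sphere Christoffels to leading order. Finally $g^{ab}\pi_{ab}/r=\tilde\sigma^{ab}\pi^{(0)}_{ab}/r^3+O(r^{-4})$. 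Assembling these pieces produces the stated formula for $\nabla_g^i\pi_{ir}$. An entirely analogous substitution in the second identity, using $\partial_r\pi_{ar}=-\pi_{ar}^{(-1)}/r^2+O(r^{-3})$, the combination $-\pi_{ar}/r+3\pi_{ar}/r=2\pi_{ar}^{(-1)}/r^2+O(r^{-3})$, and $\nabla^b_\sigma\pi_{ba}=\tilde\nabla^b\pi^{(0)}_{ba}/r^2+O(r^{-3})$, delivers the second formula.

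The main obstacle is simply the bookkeeping: one must verify that all sub-leading contributions coming from the connection coefficients (notably $\Gamma^r_{bc}=-r\tilde\sigma_{bc}+O(1)$, which carries an unfavorable factor of $r$), from the cross terms produced by $g^{ar}=O(r^{-2})$ and $g_{ra}=O(1)$, and from the replacement of $\nabla^b_\sigma$ by $\tilde\nabla^b$ fall into the stated error terms and do not contaminate the $r^{-3}$ (respectively $r^{-2}$) coefficient. This reduces entirely to the Christoffel asymptotics already recorded in the first lemma of the appendix, so no new ingredient is required beyond careful order counting.
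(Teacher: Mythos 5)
Your proposal is correct and is essentially the paper's own proof: the paper likewise obtains the result by substituting the expansions $\partial_r \pi_{rr} = -2\pi_{rr}^{(-2)}r^{-3}+O(r^{-4})$ and $\partial_r \pi_{ar} = -\pi_{ar}^{(-1)}r^{-2}+O(r^{-3})$ into the two identities of the preceding lemma, exactly as you do. One small caveat, shared by the paper itself and harmless for the later application (which only uses the symmetry of $\pi^{(0)}_{ab}$): a literal substitution in the second identity yields $+\tilde\nabla^b\pi^{(0)}_{ba}$ rather than the minus sign in the stated formula, so the sign discrepancy sits between the appendix's two lemmas rather than in your argument.
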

\begin{proof}
For  an asymptotically flat initial data set of order $1$ , we have
\begin{equation}
\begin{split}
\partial_r \pi_{rr} =& \frac{-2 \pi^{(-2)}_{rr}}{r^3}+ O(r^{-4})\\
\partial_r \pi_{ar}  =&  \frac{ - \pi^{(-1)}_{ar} }{r^2}+ O(r^{-3}).
\end{split}
\end{equation}
Combining the above with Lemma A.2 gives the desired result.
\end{proof}

\end{document}